\theoremstyle{plain}
 \theoremstyle{remark} 
\newtheorem {theo} {\bf Theorem} [section]
\newtheorem {prop} [theo] {\bf Proposition}
\newtheorem {lem} [theo] {\bf Lemma}
\newtheorem {note} [theo] {\bf Note}
\newtheorem {defi} {\bf Definition}[section]
\newtheorem{exam} {\bf Example}[section]
\newtheorem{rem}{\bf Remark}[section]
\newtheorem{appr} {\bf Approach}[section]
\newtheorem{sche} {\bf Scheme}[section]
\numberwithin{equation}{section}
\begin{document}
\title[Single-shot  phase retrieval]{Single-shot     phase retrieval: a holography-driven problem in Sobolev  space}
\author{Youfa Li}
\address{College of Mathematics and Information Science\\
Guangxi University,  Nanning, China }
\email{youfalee@hotmail.com}
\author{Shengli Fan}
\address{CREOL College of Optics \& Photonics\\
University of Central Florida,  Orlando, FL 32816}
\email{shengli.fan@knights.ucf.edu}
\author{Deguang Han}
\address{Department of Mathematics,
University of Central Florida,  Orlando, FL 32816}
\email{Deguang.Han@ucf.edu}
\thanks{Youfa Li is partially supported by Natural Science Foundation of China (Nos: 61961003, 61561006, 11501132),  Natural Science Foundation of Guangxi (Nos: 2019GXNSFAA185035, 2016GXNSFAA380049) and  the talent project of  Education Department of Guangxi Government  for Young-Middle-Aged backbone teachers.
Deguang Han  is  supported by the NSF grant   DMS-2105038.
}
\keywords{phase retrieval, holography, admissible  reference wave,  Sobolev space, single-shot intensity, approximation to  quasi-interference intensity}
\subjclass[2010]{Primary 42C40;  94A12}

\date{\today}

\begin{abstract}
The  phase-shifting digital holography (PSDH) is a widely used   approach for recovering target  signals  by their
interference (with reference signals) intensity measurements. Such reference signals  are   traditionally
from multiple shots (or multiple phase-shiftings) of the  reference wave. However, the imaging of dynamic target  signals  requires a single-shot PSDH approach, namely, such an approach
depends only  on the intensity measurements   from the interference with
the single reference signal  (being from the single phase-shifting of the reference wave).
 In this paper, based on
the uniform  admissibility of  plane (or spherical) reference wave and  the interference intensity-based   approximation to
quasi-interference intensity,
the nonnegative refinable function is applied  to  establish the  single-shot PSDH  in Sobolev space. Our approach is conducted by
the intensity measurements   from the interference of the target  signal  with  a single  reference signal. The  main results
imply that  the approximation version from such a  single-shot  approach converges    exponentially to the signal    as the  level   increases.
Moreover, like the transport of intensity equation (TIE),  our results  can be   interpreted from the perspective of intensity difference.
\end{abstract}
\maketitle

\section{Introduction}\label{introductionsection}
Phase retrieval (PR) is a nonlinear   problem  that seeks to
reconstruct   a target  signal   $f\in \mathcal{H}$, \emph{up to  the potential ambiguity},  from   the intensity measuements
\begin{align}\notag \begin{array}{lll}  |\langle f, \textbf{a}_{k}\rangle|^{2}, \ k\in \Gamma, \end{array}\end{align}
where $\mathcal{H}$ is the signal space and  $\textbf{a}_{k}$ (an operator) is called  the measurement vector (c.f. \cite{Ba1,Ba2,Xu2}). PR has been widely investigated  in function spaces   (e.g. \cite{Alaifari,QiyuPR,QiyuPR1,Gaussian,Lai,Pohl,Iserles}) and many optical
problems  such as   coherent diffraction imaging (\cite{crystallography,reasonforPR}),   quantum tomography (\cite{Heinosaarri}), ptychography (\cite{Fannjiang}) and holography (\cite{holographic}). The most classical PR (c.f. \cite{Fienup1,Fienup2}) is achieved by the Fourier intensity measuements, namely, the measurement vector is chosen as the Fourier sampling operator. Our PR problem  here   is derived from the  phase-shifting digital holography (PSDH) introduced by Yamaguchi and Zhang \cite{Zhang}. Before introducing our model and main results, the background on PSDH is necessary.

\subsection{Background on  phase-shifting digital holography}
Holography was invented by Dennis Gabor in 1948 to improve the resolution of the electron microscope (c.f.\cite{D.Gabor}).
From then on, digital  holography (DH) has been blooming  into a very broad field with numerous applications including  bio-medicine, microscopy,
cyber-physical security and so on. Readers can refer to the most recent  roadmap paper \cite{Roadmap} for an overview of such applications.
These applications depend on effective DH configurations such as off-axis Fresnel holography (\cite[Section 5.1]{holographic}), Fourier holography (\cite[Section 5.2]{holographic}) and in-line holography (\cite[Section 5.4]{holographic}). Among others, PSDH
is an effective  holography configuration  since it  removes  cross terms.
It   has gained much  attention over the last two decades (c.f. \cite{Roadmap,holographic}).
By  \cite[Section 5.6]{holographic},
PSDH is mathematically  modeled as: to reconstruct a target signal  (or an object) $f$ by its interference (with the reference signal  $e^{\textbf{i}\theta}g$) intensity measurements:
\begin{align}\label{jh} I^{\theta}(\emph{\textbf{x}})=|f(\emph{\textbf{x}})+e^{\textbf{i}\theta}g(\emph{\textbf{x}})|^{2},\end{align}
where $g$ is termed as the reference wave.
Choosing $\theta=0, \pi/2, \pi, 3\pi/2$,  the four-shot (or four-step) PSDH (c.f. \cite{Yamaguchi0,WYZ}) admits the recovery formula
\begin{align}\label{fourshot}
f(\emph{\textbf{x}})=\frac{1}{4\bar{g}(\emph{\textbf{x}})}\big(I^{0}(\emph{\textbf{x}})-I^{\pi}(\emph{\textbf{x}})+\textbf{i}(I^{\pi/2}(\emph{\textbf{x}})-I^{3\pi/2}(\emph{\textbf{x}}))\big),
\end{align}
where $\bar{g}(\emph{\textbf{x}})$ is the complex conjugate of $g(\emph{\textbf{x}})$.
Note that \eqref{fourshot}  is not involved with  the cross terms: $f(\emph{\textbf{x}})\bar{g}(\emph{\textbf{x}})$ and $\bar{f}(\emph{\textbf{x}})g(\emph{\textbf{x}})$. Through the three-shot ($\theta=0, \pi/2, \pi$; c.f. \cite{Yamaguchi0} or \cite[(39)]{holographic}) or  two-shot (requiring additional conditions on intensities; c.f. \cite{Yamaguchi1}) approaches,
the recovery also can be achieved. Compared with phaseless sampling (PS), the following states that PSDH enjoys some nice properties.

If  $e^{\textbf{i}\theta}g(\emph{\textbf{x}})$ is removed from \eqref{jh}, then  PSDH degenerates to the  PS problem (c.f. \cite{QiyuPR,Gaussian,Lai,LISUN,Wenchangsun, Iserles}) which requires to reconstruct $f$
by its  phaseless samples (non-interference intensity measurements) $\{|f(\emph{\textbf{x}})|^{2}: \emph{\textbf{x}}\in \Lambda\}$.
Recall that the PS results in the literature  hold for special signals  (e.g.
\cite{QiyuPR,Wenchangsun}), and  the recovery is achieved up to the potential  ambiguity
such as a unimodular scalar (e.g. \cite{QiyuPR,LISUN,Wenchangsun,Iserles}) and conjugation ambiguity (e.g.  \cite{Lai}).
Contrary to PS, PSDH  holds for any signal  and its recovery
result is not involved with any  ambiguity.
Some  PR problems (e.g. \cite{Robert1,Xuzhiqiang,Huangmeng}) in the field of  applied harmonic analysis is related  to
PSDH.


\subsection{ The imaging of a dynamic object requiring  a single-shot  configuration and the recent optical  single-shot configurations}
Recall that the traditional configurations (including the above mentioned  four-, three-, two-shot configurations) for   PSDH
is time-consuming since one needs to switch the phase  plate (or mathematically choosing different $\theta$ in  \eqref{jh}).
 Such
configurations are ideally suited for static objects but not for dynamic ones especially not  for the vivo having very short life span  (c.f. \cite{Liguifang}).
Correspondingly, a single-shot (requiring  only one  fixed $\theta$ in \eqref{jh}) intensity  based reconstruction approach
is required for  the  dynamic imaging. In optics,  some recent single-shot  configurations are introduced
to  conduct PSDH.
Although the single-shot intensity measurements   are used indeed in   these configurations,  other additional  information are also necessary
or extra  optical devices are required to generate additional  phase-shifting  holograms.
The configurations in Nobukawa et.al \cite{Checkboard} and Zhang et.al \cite{Liguifang}  are such two typical examples.
In \cite[Fig. 1]{Liguifang},   a configuration for single-shot holography was   established   by the polarization-encoded information (obtained by
two polarization beam splitters (PBS) and four cameras, c.f. \cite[(6)]{Liguifang})  of both the  object and reference beams.
From the mathematical perspective, such a configuration indeed depends on  increasing  the dimensionality of the
information  space.  In \cite{Checkboard},   gratings
are applied to a single  reference beam  to generate multiple  phase-shifting  reference beams (or equivalently to generate
different $\theta$ in \eqref{jh}) and therefore to generate
multiple phase-shifting  holograms of multiple shots. From the mathematical point of view, such a method (c.f. \cite[(4)]{Checkboard})  essentially does not take too  much difference  from
the traditional holography \eqref{fourshot}.

\subsection{The single-shot  phase retrieval problem and the main result}\label{section3.1}
A dynamic (w.r.t time) object is commonly modeled as a function   $h(\emph{\textbf{z}}; t)$ where $\emph{\textbf{z}}$ and  $t$ represent  the location and  time, respectively  (e.g. \cite{Dynamicboject1, Dynamicboject}). In this paper, it is expected that the recovery approach holds for every time $t$.
Therefore,
from the mathematical perspective
a natural PR problem is, can one reconstruct a function  $f(\emph{\textbf{x}})=|f(\emph{\textbf{x}})|e^{\textbf{i}\theta(f(\emph{\textbf{x}}))}$ just by the single-shot   interference intensity
\begin{align}\label{intensitygeneral}  I(\emph{\textbf{x}})=|f(\emph{\textbf{x}})+g(\emph{\textbf{x}})|^{2} ?\end{align}
Summarizing what has been addressed above, the single-shot  model \eqref{intensitygeneral}
enjoys the following advantages:\\
(1) Compared with the traditional multi-shot methods such as  \eqref{fourshot},
the model \eqref{intensitygeneral} just requires the single-shot intensities and consequently it meets the requirements of
the imaging  of   dynamic objects.  \\
(2) The    holography in \cite{Liguifang}
requires two PBS and four cameras.  Unlike \cite{Liguifang},
\eqref{intensitygeneral} does not require the  polarization-encoded information.
Consequently, the PBS and additional cameras  are not necessary. One just  needs    a  camera to record the intensity measurements.\\
(3) Unlike  the    holography in
\cite{Checkboard}, the measurements in  \eqref{intensitygeneral}  are directly from single-shot   interference
and consequently one  does not need the     gratings (to generate
multiple phase-shifting  holograms of multiple shots).

Low cost  is an important requirement  for  evaluating a holography  setup (c.f. \cite{FAAlight}). The above items (2, 3)
imply that model \eqref{intensitygeneral} can cut the cost of the setup of imaging in terms of devices.
The following mathematically  states   that  \eqref{intensitygeneral} is absolutely  not trivial and  essentially different from  the traditional (multi-shot) PSDH problem.

\begin{rem}\label{ffffremark}  By \eqref{intensitygeneral} we have
\begin{align}\label{jhk0} I(\emph{\textbf{x}})=|f(\emph{\textbf{x}})|^{2}+f(\emph{\textbf{x}})\bar{g}(\emph{\textbf{x}})+\bar{f}(\emph{\textbf{x}})g(\emph{\textbf{x}})+|g(\emph{\textbf{x}})|^{2}.\end{align}
The cross terms: $f(\emph{\textbf{x}})\bar{g}(\emph{\textbf{x}})$
and $\bar{f}(\emph{\textbf{x}})g(\emph{\textbf{x}})$ are contained in   \eqref{jhk0}.
For any $\emph{\textbf{x}}\in \mathbb{R}^{d}$, it is impossible
to determine    $|f(\emph{\textbf{x}})|$ and the phase $\theta(f(\emph{\textbf{x}}))$ of $f(\emph{\textbf{x}})$ directly from the single-shot intensity $I(\emph{\textbf{x}})$.
Even though $|f(\emph{\textbf{x}})|$ is known, the most greatest difficulty is that   the term $f(\emph{\textbf{x}})\bar{g}(\emph{\textbf{x}})+\bar{f}(\emph{\textbf{x}})g(\emph{\textbf{x}})$ just
provides  the information $\cos(\theta(f(\emph{\textbf{x}}))-\theta(g(\emph{\textbf{x}})))$ but not  $\theta(f(\emph{\textbf{x}}))$.
Consequently,
the   phases  $\theta(f(\emph{\textbf{x}}_{1}))$ and $\theta(f(\emph{\textbf{x}}_{2}))$ are very difficult  to match together for any  two points $\emph{\textbf{x}}_{1}$ and $\emph{\textbf{x}}_{2}$.
 Such an obstacle does not exist in the traditional PSDH problem (e.g., see  \eqref{fourshot}) since the cross terms  are   removed therein through the multi-shot information.
\end{rem}

The  function theory  of Sobolev space has many applications  in  optics including   holography (e.g. \cite{Enciso,Maxfield}).
Motivated   by the wavelet multilevel approximation (c.f. \cite{waveletBOOK2,waveletBOOK3,waveletBOOK1,Matla}),
we will  establish a mathematical theory on  the single-shot  PR  problem for   the functions in Sobolev space.


Before introducing  the main result    of this paper,
some definitions and  denotations are necessary. Throughout the paper,
the region of interest (ROI) on $\mathbb{R}^{d}$ is denoted by $\Omega$.
For a function $f$ on $\mathbb{R}^{d}$, its restriction on $\Omega$
is denoted by $f|_{\Omega}$.  The Sobolev space $H^s(\mathbb{R}^d), s\in \mathbb{R}$ consists of functions $f$
such that $\int_{\mathbb{R}^{d}}|\widehat{f}(\xi)|^{2} (1+\|\xi\|_{2}^{2})^{s}d\xi<\infty,$
where $\widehat{f}(\xi)=\int_{\mathbb{R}^{d}}f(\emph{\textbf{x}})e^{-\textbf{i}\emph{\textbf{x}}\cdot \xi}d\emph{\textbf{x}}$ is
the Fourier transform of $f$. The Sobolev smoothness exponent  of $f$ is defined to be $\nu_{2}(f)=\sup\{s\in \mathbb{R}: f\in H^{s}(\mathbb{R}^d)\}.$
For a function $\phi: \mathbb{R}^d\longrightarrow \mathbb{C}$, we say that it is refinable
if $\widehat{\phi}(2\xi)=\widehat{a}(\xi)\widehat{\phi}(\xi)$ where $\widehat{a}(\xi)$ is a $2\pi \mathbb{Z}^{d}$-periodic
function and it is  referred to as  the mask symbol of $\phi.$ We say that $\widehat{a}(\xi)$
has  $\kappa+1$ \textbf{sum rules} (c.f. \cite{waveletBOOK1}) if
\begin{align}\label{12345mmbcxx}\widehat{a}(\xi+2\pi\gamma)=\hbox{O}(\|\xi\|^{\kappa+1}_{2}), \xi\rightarrow \textbf{0},
\end{align}
where $\gamma\neq\textbf{0}$ is any representative of the  cosets   $[2^{-1}\mathbb{Z}^{d}]/\mathbb{Z}^{d}$.
If the mask symbol  $\widehat{a}(\xi)$
has  $\kappa+1$ sum rules,   we also  say that the sum rule order $\hbox{sr}_{\phi}$ of $\phi$ is  $\kappa+1$.

The plane and spherical reference waves are commonly utilized in holography (c.f. \cite{Fourieroptics}). They are defined as
the complex-valued  functions  $g(\emph{\textbf{x}})=ae^{\textbf{i}\mathcal{K}\cdot \emph{\textbf{x}}}$ and   $g(\emph{\textbf{x}})=\frac{b}{\|\emph{\textbf{x}}\|_{2}}e^{\textbf{i}\nu\|\emph{\textbf{x}}\|_{2}}$ ($\mathcal{K}, \emph{\textbf{x}}\in \mathbb{R}^{d},  \nu\in \mathbb{R} , a, b>0 $), respectively. We will design  a class of  plane and spherical reference waves
(to be called uniformly admissible reference waves) such that the recovery is stable as the level increases.
The recovery results will be established in  Theorems \ref{firstmaintheorem} and \ref{secondmaintheorem}.
They are summarized as follows.

\begin{theo}\label{jielunzongjie}
Suppose that $\Omega\subseteq \mathbb{R}^{d}$ is a bounded ROI,   $\{g_{N}\}^{\infty}_{N=0}$  are the uniformly    admissible plane (or spherical) reference waves where   $N$ denotes  the level. Moreover, $\phi$ sitting in the Sobolev space  $ H^{s}(\mathbb{R}^{d})$ (with $s>d/2$)  is a nonnegative refinable  function such that  its  smoothness exponent  $\nu_{2}(\phi)>\varsigma>s$ and sum rule order  $\hbox{sr}_{\phi}>\varsigma$.
%
Then for any $f\in H^{\varsigma}(\mathbb{R}^{d})$, it holds that
\begin{align}\label{mainresult} \begin{array}{lll}
\Big\|f|_{\Omega}-S_{N}f|_{\Omega}\Big\|_{L^2(\mathbb{R}^d)}
=\hbox{O}(2^{-N\alpha/2}),
\end{array}\end{align}
where $\alpha=\min\{1, \varsigma-s\}$, and the  recovery version  $S_{N}f|_{\Omega}$ is derived from $\phi$
and the measurements of the single-shot  intensity  $I_{N}(\emph{\textbf{x}})=|f(\emph{\textbf{x}})+g_{N}(\emph{\textbf{x}})|^{2}$ on a finite   set.
\end{theo}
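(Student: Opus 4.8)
The plan is to derive Theorem~\ref{jielunzongjie} by combining two essentially independent ingredients: a deterministic multilevel approximation estimate coming from the refinable structure of $\phi$, and a stability estimate showing that the coefficients fed into that approximation can be reconstructed, up to a controlled error, from the single-shot intensity $I_N$ alone. Since $s>d/2$ and $\nu_2(\phi)>\varsigma>s$, the Sobolev embedding makes point evaluation of both $f$ and $\phi$ legitimate, which is what lets an intensity-sampling scheme make sense.

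\textbf{Step 1 (refinable multilevel approximation).} First I would attach to the dyadic dilates $\phi(2^N\cdot-k)$ a quasi-projection operator $Q_Nf=\sum_k c_k(f)\,\phi(2^N\cdot-k)$ whose coefficients $c_k(f)$ are (approximately) localized samples of $f$ near $2^{-N}k$, chosen so that $Q_N$ reproduces polynomials up to order $\varsigma$. Because $\mathrm{sr}_\phi>\varsigma$ supplies the polynomial reproduction and $\nu_2(\phi)>\varsigma$ supplies enough smoothness to accommodate $f\in H^\varsigma$, the wavelet/refinable approximation theory cited in the introduction gives $\|f-Q_Nf\|_{L^2(\mathbb R^d)}=\mathrm O(2^{-N\varsigma})$; since $\varsigma>s>d/2\ge\alpha/2$, this is already $o(2^{-N\alpha/2})$ with room to spare. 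Boundedness of $\Omega$ is used only to make the sum over $k$ effectively finite and to localize the estimate.

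\textbf{Step 2 (single-shot recovery of the coefficients).} The heart of the matter is to produce, from $I_N$ sampled on a finite set, numbers $\lambda_{N,k}$ approximating $c_k(f)$. From \eqref{jhk0},
\begin{align*}
I_N(\mathbf x)-|g_N(\mathbf x)|^2=|f(\mathbf x)|^2+f(\mathbf x)\overline{g_N(\mathbf x)}+\overline{f(\mathbf x)}g_N(\mathbf x),
\end{align*}
and, $g_N$ being a \emph{known} plane (or spherical) wave with a spatially varying phase, I would exploit that phase variation: evaluating $I_N$ at two nearby points at which the reference phase differs by (about) $\pi/2$ lets one solve for both $\operatorname{Re}f$ and $\operatorname{Im}f$ near that location. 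This is the ``quasi-interference intensity'' of the title, and it is exactly why a single shot suffices even though \eqref{jhk0} alone exposes only $\cos(\theta(f)-\theta(g_N))$, the obstruction emphasized in Remark~\ref{ffffremark}. The uniform admissibility of $\{g_N\}$ should be precisely the package of conditions keeping $|g_N|$ bounded away from $0$ and $\infty$ on $\Omega$ and keeping its phase gradient comparable to $2^N$, so the auxiliary point lies within $\mathrm O(2^{-N})$ of the base point; the regularity $f\in H^\varsigma$ then controls the increment of $f$ between the two points, while the stray self-term $|f|^2$ is absorbed by the same admissibility normalization. One expects the exponent $\min\{1,\varsigma-s\}$ to be the effective order of the surviving Taylor/moment remainder once the $H^\varsigma$ regularity of $f$ is matched against the $H^s$-resolution of the reconstruction space $\operatorname{span}\{\phi(2^N\cdot-k)\}$, so that the pointwise coefficient error is $\mathrm O(2^{-N\alpha})$.

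\textbf{Step 3 (assembling and the main obstacle).} Setting $S_Nf:=\bigl(\sum_k\lambda_{N,k}\,\phi(2^N\cdot-k)\bigr)\big|_\Omega$, a finite sum by boundedness of $\Omega$ and the compact overlap of the bumps, the triangle inequality gives
\begin{align*}
\|f|_\Omega-S_Nf|_\Omega\|_{L^2}\le\|f|_\Omega-Q_Nf|_\Omega\|_{L^2}+\Bigl\|\sum_k\bigl(c_k(f)-\lambda_{N,k}\bigr)\phi(2^N\cdot-k)\Bigr\|_{L^2(\Omega)},
\end{align*}
where the first term is $\mathrm O(2^{-N\varsigma})$ by Step~1 and the second, by the Riesz/stability bounds of the system $\{\phi(2^N\cdot-k)\}$ together with the coefficient estimate of Step~2 (pointwise error $\mathrm O(2^{-N\alpha})$ propagating, after the usual $L^2$-summation over $\mathrm O(2^{Nd})$ bumps of $L^2$-mass $2^{-Nd/2}$, to $\mathrm O(2^{-N\alpha/2})$), is $C\,2^{-N\alpha/2}$; with $\alpha=\min\{1,\varsigma-s\}$ this is \eqref{mainresult}. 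Specializing $g_N(\mathbf x)=a_Ne^{\textbf{i}\,\mathcal K_N\cdot\mathbf x}$ and $g_N(\mathbf x)=\tfrac{b_N}{\|\mathbf x\|_2}e^{\textbf{i}\nu_N\|\mathbf x\|_2}$ then yields Theorems~\ref{firstmaintheorem} and \ref{secondmaintheorem}. The real obstacle is Step~2: one must simultaneously tune the admissibility conditions on $\{g_N\}$, the geometry of the finite sampling set, and the auxiliary-point offsets so that (i) both amplitude and phase of $f$ are recovered (not merely a cosine), (ii) the unwanted $|f|^2$ self-term and the nearby-point remainder are each $\mathrm O(2^{-N\alpha})$, and (iii) the passage from these pointwise errors to a uniform-in-$N$ $L^2(\Omega)$ bound is stable --- all without the cross-term cancellation that makes the classical multi-shot formula \eqref{fourshot} trivial.
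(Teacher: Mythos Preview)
Your three--step outline matches the paper's architecture, but several concrete details diverge.

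\textbf{Step 2.} Two nearby points will not suffice: from $I_N(\emph{\textbf{x}})=|f(\emph{\textbf{x}})|^2+2\Re\bigl(f(\emph{\textbf{x}})\overline{g_N(\emph{\textbf{x}})}\bigr)+|g_N(\emph{\textbf{x}})|^2$ each sample gives one real equation in the three real unknowns $|f|^2,\Re f,\Im f$ at the base point. The paper uses \emph{three} points $\emph{\textbf{k}},\emph{\textbf{k}}',\emph{\textbf{k}}''$ (Theorem~\ref{prop123}): subtracting the equations in pairs kills the common $|f(2^{-N}\emph{\textbf{k}})|^2$ and leaves a $2\times2$ \emph{linear} system for $(\Re f,\Im f)$. ``Admissibility'' (Definition~\ref{addefinition}) is precisely nondegeneracy and uniform conditioning of that $2\times2$ matrix, not an amplitude normalization that ``absorbs'' the self--term.

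\textbf{Step 3.} Here you and the paper part ways. The paper does \emph{not} use a Riesz bound; it uses the hypothesis $\phi\ge0$ together with the partition of unity $\sum_{\emph{\textbf{k}}}\phi(\cdot-\emph{\textbf{k}})\equiv1$ to obtain
\[
\Bigl\|\sum_{\emph{\textbf{k}}\in\Lambda_{\Omega,N}}(c_{\emph{\textbf{k}}}-\lambda_{N,\emph{\textbf{k}}})\,\phi(2^N\cdot-\emph{\textbf{k}})\Bigr\|_{L^2(\Omega)}\le\sup_{\emph{\textbf{k}}}|c_{\emph{\textbf{k}}}-\lambda_{N,\emph{\textbf{k}}}|\cdot\hbox{Vol}(\Omega)^{1/2}.
\]
This is the \emph{only} place the nonnegativity hypothesis on $\phi$ is used. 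Your Riesz argument would render that hypothesis idle; and incidentally your arithmetic is off --- with $\#\Lambda_{\Omega,N}=\hbox{O}(2^{Nd})$ terms of size $\hbox{O}(2^{-N\alpha})$ and $L^2$--normalization $2^{-Nd/2}$, the Riesz bound gives $\hbox{O}(2^{-N\alpha})$, not $\hbox{O}(2^{-N\alpha/2})$.

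\textbf{Where the rate comes from.} In the paper the bottleneck is Step~1, not Step~3. Their sampling approximation (Proposition~\ref{caiyangapproximation}, built via the dual choice $\tilde\phi=\Delta$ in the Sobolev pair $(H^s,H^{-s})$) yields only
\[
\Bigl\|f-\sum_{\emph{\textbf{k}}}f(2^{-N}\emph{\textbf{k}})\phi(2^N\cdot-\emph{\textbf{k}})\Bigr\|_{L^2}\le C(s,\varsigma)\|f\|_{H^\varsigma}\,2^{-N(\varsigma-s)/2},
\]
not the $\hbox{O}(2^{-N\varsigma})$ you claim. Since $\alpha=\min\{1,\varsigma-s\}\le\varsigma-s$, this first term is what fixes the announced rate $\hbox{O}(2^{-N\alpha/2})$; Step~3 already contributes the faster $\hbox{O}(2^{-N\alpha})$. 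Your accounting --- a sharp Step~1 and a loose Step~3 --- inverts the paper's actual bookkeeping.
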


\begin{rem}
(1) As the level $N$ increases, $S_{N}f|_{\Omega}$ exponentially converges to $f|_{\Omega}$.
(2) As        the multi-shot holography, our recovery result is not involved with the ambiguity or  the phase matching in Remark \ref{ffffremark}.
\end{rem}

\begin{rem}\label{remmarkdd}
(1) Note that  $H^{\varsigma_{2}}(\mathbb{R}^{d})\subseteq H^{\varsigma_{1}}(\mathbb{R}^{d})$ for any $\varsigma_{1}\leq \varsigma_{2}$.
From the perspective of the  applicable  scope, the Sobolev  smoothness exponent $\varsigma(>d/2)$ in Theorem \ref{jielunzongjie}
should be as   close to $d/2$ as possible  such that more signals   can be recovered. (2)
It is required in Theorem \ref{jielunzongjie} that
\begin{align}\label{yaoqiu1234}\hbox{(i)} \  \min\{\nu_{2}(\phi), \hbox{sr}_{\phi}\}>d/2;
\ \hbox{(ii)} \ \phi \ \hbox{being nonnegative}. \end{align}
On the other hand,
as long as    $\phi$ satisfies \eqref{yaoqiu1234}, then Theorem \ref{jielunzongjie} holds for any $\varsigma$ and $s$ such that
$\min\{\nu_{2}(\phi), \hbox{sr}_{\phi}\}>\varsigma>s>d/2$.
\end{rem}

\subsection{On the construction  of   the refinable function $\phi$ in Theorem \ref{jielunzongjie}}\label{gouzaojiaxihanshu}
It follows from Remark \ref{remmarkdd} (2) that the refinable function $\phi$   is required to satisfy \eqref{yaoqiu1234}.
The purpose of this subsection is to explain that \eqref{yaoqiu1234} is  weak and the construction of   $\phi$  can be achievable. The cardinal B-splines are helpful for our discussion.

The cardinal B-splines $\{B_{m}\}_{m\in \mathbb{N}}$ on $\mathbb{R}$ are an interesting class of nonnegative  refinable functions  (c.f. \cite{waveletBOOK2,waveletBOOK1,Wenchangsun}), where
$B_{m}$ is the $m$th cardinal B-spline
defined by
$ B_{m}:=\overbrace{\chi_{(0,1]}\star\ldots\star\chi_{(0,1]}}^{m\ \tiny{\hbox{copies}}}$
with $\chi_{(0,1]}$    the characteristic function of the interval  $(0,1]$
and $\star$  the convolution.
Through the  direct  calculation we have $\hbox{supp}(B_{m})=[0, m]$  and
  $\widehat{B_{m}}(\xi)=e^{-\textbf{i} \frac{m\xi}{2}}(\frac{\sin\xi/2}{\xi/2})^{m}. $ From this,   $B_{m}$ is  refinable such that
\begin{align}\label{lithography} \widehat{B_{m}}(2\xi)=(\frac{1+e^{-\textbf{i}\xi}}{2})^{m}\widehat{B_{m}}(\xi).\end{align}
Moreover, $\hbox{sr}_{B_{m}}=m$ and $\nu_{2}(B_{m})=m-1/2$
(c.f. \cite[section 6.1.1]{waveletBOOK1}).

\subsubsection{\textbf{The Sobolev smoothness exponent  $\min\{\nu_{2}(\phi), \hbox{sr}_{\phi}\}>d/2$ does not necessarily implies that $\phi$ is  smooth}}
 If $\varsigma>d/2$ then the functions in $H^{\varsigma}(\mathbb{R}^d)$ are continuous (c.f. \cite[Chapter 9.1]{Matla}, \cite[section 1]{LiSCI}).
Therefore,  $\phi$  in Theorem \ref{jielunzongjie} is  continuous.
But    $\phi$ is not necessarily   smooth.
The following explains    this for the cases of $d=1, 2$.

\begin{rem}\label{ttensorr}
(1) Direct calculation gives that
$B_{2}(x)$ takes $x, 2-x$ and $0$ for $x\in  [0, 1], (1, 2]$ and $(-\infty, 0)\cup (2, +\infty)$, respectively.
Additionally,  $\hbox{sr}_{B_{2}}=2>1/2$ and $\nu_{2}(B_{2})=3/2>1/2$.
Choosing $\phi=B_{2}$, then \eqref{yaoqiu1234} holds and consequently  it satisfies the requirement for
Theorem \ref{jielunzongjie}. But it is clear that  $B_{2}$ is not smooth.
%
(2) For the case of $d=2$, through the tensor product  we define $\phi(x_{1}, x_{2})=B_{2}(x_{1})B_{2}(x_{2}).$
Then  $\hbox{sr}_{\phi}=2>1$.
Clearly, $\phi$ is nonnegative.
Moreover,  $\phi$ is refinable such that
$\widehat{\phi}(2\xi_{1}, 2\xi_{2})
=(\frac{1+e^{-\textbf{i}\xi_{1}}}{2})^{2}(\frac{1+e^{-\textbf{i}\xi_{2}}}{2})^{2}$
$\widehat{\phi}(\xi_{1}, \xi_{2}).$
By $\widehat{\phi}(\xi_{1}, \xi_{2})=e^{-\textbf{i} (\xi_{1}+\xi_{2})}(\frac{\sin\xi_{1}/2}{\xi/2})^{2}(\frac{\sin\xi_{2}/2}{\xi/2})^{2}$,
direct calculation gives us that $\nu_{2}(\phi)=3/2>1$.
Then \eqref{yaoqiu1234} holds and consequently  $\phi$ satisfies the requirement for
Theorem \ref{jielunzongjie}. But it is clear that  $\phi$ is  not smooth.
\end{rem}

%

%

\subsubsection{\textbf{Construction of the   refinable function}  $\phi$ \textbf{required in Theorem} \ref{jielunzongjie}}
Since  $\hbox{sr}_{B_{m}}=m$ and $\nu_{2}(B_{m})=m-1/2$, then
\eqref{yaoqiu1234} holds when  choosing $m\geq2$ and  $\phi=B_{m}$.
The purpose of this subsection is to construct more   refinable functions that  satisfy
\eqref{yaoqiu1234} and consequently meet  the requirements for Theorem \ref{jielunzongjie}.

We start with the nonnegative property \eqref{yaoqiu1234} (ii). For the case of  $d=1$,
there are many results (e.g. \cite{Micchelli,Yangwang,Zhouxinlong}) on the mask symbol $\widehat{a}(\xi):=\sum^{n}_{k=0}a_{k}e^{-\textbf{i}k\xi}$ such that the corresponding compactly supported    refinable function
 $\phi$ is nonnegative.
 In particular, it was proved in \cite[pp.80-83]{Micchelli} that  for $n\geq2$   if the following holds:
 \begin{align}\label{feifuxing}
 a_{k}>0 \ \hbox{for any} \ k\in \{0, 1, \ldots, n\} \ \  (\ref{feifuxing} A), \sum_{j}a_{2j}=\sum_{j}a_{2j+1}=1/2 \ \  (\ref{feifuxing} B),
 \end{align}
 then $\phi(x)>0$ and $\phi(x)=0$  for    $x\in (0, n)$ and $x\notin (0, n),$ respectively.
 Note that
 (\ref{feifuxing}$B$) implies   $\hbox{sr}_{\phi}\geq1.$
 By \eqref{feifuxing} one can construct many nonnegative refinable functions. A standard construction
 formula is to set  that $\widehat{a}(\xi)=(\frac{1+e^{-\textbf{i}\xi}}{2})^{L}Q(\xi)$
 where $Q(\xi)=\sum^{n-L}_{k=0}q_{k}e^{-\textbf{i}k\xi}$ such that every  $q_{k}>0$.  
 The refinable function $\phi$ corresponding to  such a mask symbol  $\widehat{a}(\xi)$  has at least 
 $L$ sum rules (c.f. \cite[Theorem 1.2.5]{waveletBOOK1}). 
Additionally, the sum rule order  can be lifted through the convolution. Particularly, suppose that  $\phi_{1}$ and
$\phi_{2}$ are two nonnegative refinable functions on $\mathbb{R}$ such that
$\widehat{\phi_{1}}(2\xi)=\widehat{a_{1}}(\xi)\widehat{\phi_{1}}(\xi)$,
$\widehat{\phi_{2}}(2\xi)=\widehat{a_{2}}(\xi)\widehat{\phi_{2}}(\xi)$
and $\hbox{sr}_{\phi_{1}}, \hbox{sr}_{\phi_{2}}\geq1$. Then
$\phi:=\phi_{1}\star\phi_{2}$ is also nonnegative and  refinable such that $\widehat{\phi}(2\xi)=\widehat{a_{1}}(\xi)\widehat{a_{2}}(\xi)
\widehat{\phi}(\xi).$
Moreover,
\begin{align}\label{harbor}\hbox{sr}_{\phi}=\hbox{sr}_{\phi_{1}}+\hbox{sr}_{\phi_{2}}.\end{align}
Then the sum rule orders of $\phi_{1}$ and $\phi_{2}$ are both lifted.
As in Remark \ref{remmarkdd} (1), we just require that $\nu_{2}(\phi)>1/2.$
Such a requirement can be achieved by the above  convolution. Particularly,
the Sobolev smoothness exponent
\begin{align}\label{fariy}\nu_{2}(\phi)\geq\nu_{2}(\phi_{1})+\mu\end{align}
if $|\widehat{\phi_{2}}(\xi)|^{2}\leq\frac{C}{(1+\xi^{2})^{\mu}}$
for a constant $C>0$ and  any $\xi\in \mathbb{R}.$
That is, $\nu_{2}(\phi_{1})$
is lifted  if $\mu>0.$ As an example, such a  $\phi_{2}$ can be chosen as $B_{m}$ with $m\geq2.$
Summarizing \eqref{feifuxing}, \eqref{harbor} and \eqref{fariy} we conclude  that
for any $\gamma>1/2$, a nonnegative  refinable function $\phi$ on $\mathbb{R}$ can be constructed such that $\min\{\nu_{2}(\phi), \hbox{sr}_{\phi}\}>\gamma.$ Naturally, \eqref{yaoqiu1234} holds.

Motivated by Remark \ref{ttensorr} (2), from $\{B_{m}\}$   one can use the tensor product to construct
nonnegative refinable functions on $\mathbb{R}^{d}$ such that they
satisfy \eqref{yaoqiu1234}.   Particularly, define
\begin{align}
\phi(x_{1}, \ldots, x_{d})=B_{m_{1}}(x_{1})B_{m_{2}}(x_{2})\cdots B_{m_{d}}(x_{d}),
\end{align}
where every  $m_{i}\geq2.$ Then  $\nu_{2}(\phi)=\min\{m_{1}, \ldots, m_{d}\}-1/2$
(c.f. Han and Shen  \cite[Page 375]{Hanbin1}). Moreover, $\hbox{sr}_{\phi}=\min\{m_{1}, \ldots, m_{d}\}.$
Choosing appropriate $m_{1}, \ldots, m_{d}$, the refinable function  $\phi$ satisfies \eqref{yaoqiu1234}.
Besides  the tensor product-based construction, one can also choose the  refinable functions from the refinable box-splines
 (c.f. \cite{Hanbin1}) such that they satisfy  \eqref{yaoqiu1234}.



\subsection{Outline of the paper}
In subsection \ref{samplingset}, based on the wavelet multilevel approximation,  a two-step scheme is established for the
single-shot PR. The error of the scheme is analyzed in subsection
\ref{erroranalysis}. Motivated by the error estimation, the  admissibility
of reference waves is introduced in subsection \ref{yixiedingyi} such that, as the level increases  the recovery from the single-shot scheme
converges stably to the target function on the region of interest. The plane and spherical waves
are two types of reference waves commonly applied in holography. In subsection \ref{JKKKK12345},
classes of uniformly  admissible  plane and spherical reference waves are designed
such that  they enjoy some nice properties such as the admissibility  being independent
of  sampling set.
%
%

We establish the single-shot PR
in Approaches  \ref{ppp4} and \ref{ppp55} for the plane and spherical cases, respectively.
The recovery error estimation  for the two approaches relies on the interference intensity-based approximation  to the quasi-interference
intensities that is established in Theorem \ref{miyuzhong} and \eqref{HJJ}.
The  two main approximation  results are organized in Theorems \ref{firstmaintheorem} and \ref{secondmaintheorem} for   the
above two cases, respectively.
As implied   in \eqref{mainresult}, the approximation
version converges  exponentially to $f$  as the level  $N$ increases.
Inspired by the transport of intensity equation (a classical  noninterference method for PR, c.f. \cite{TIE1,TIE2,TIE3}), Theorems \ref{firstmaintheorem} and \ref{secondmaintheorem}
are interpreted in subsection \ref{interpretation} from the perspective of  intensity difference.



\section{Preliminary}\label{07654}
\subsection{Denotations}
The sets of real, complex numbers and    integers are denoted by $\mathbb{R}$, $\mathbb{C}$ and $\mathbb{Z}$,
respectively. Additionally, the set of  positive integers  is denoted by $\mathbb{N}$, and let $\mathbb{N}_{0}:=\mathbb{N}\cup\{0\}$.
For any  $a\in \mathbb{C}$, its  real and imaginary parts
are denoted by $\Re(a)$ and $\Im(a)$, respectively, such that  $a=\Re(a)+\textbf{i}\Im(a)$ where $\textbf{i}$ is the imaginary unit.
The natural basis for $\mathbb{R}^{d}$ is denoted by $\{\emph{\textbf{e}}_{k}\}^{d}_{k=1}$, where the $n$th element of  $\emph{\textbf{e}}_{k}$
takes   $1$ and $0$ for $n=k$ and $n\neq k,$ respectively.
The  Lebesgue measure of  a measurable set $\Xi\subseteq\mathbb{R}^{d}$  is denoted by $\hbox{Vol}(\Xi)$, and the cardinality
of a countable set $\mathcal{A}\subseteq\mathbb{R}^{d}$ is denoted by $\#\mathcal{A}$.
Throughout the paper, the region of interest (ROI) is denoted by $\Omega$ and the restriction
of a function $f$ on $\Omega$ is denoted by $f|_{\Omega}$.

 For $\mu=(\mu_{1}, \ldots, \mu_{d})\in \mathbb{N}^{d}_{0}$, let $p_{\mu}$ denote a $d$-variate  monomial given by
$p_{\mu}(x_{1}, \ldots, x_{d})$
$=x^{\mu_{1}}_{1}\cdots x^{\mu_{d}}_{d}.$
The total degree  of $p_{\mu}$ is $\hbox{deg}(p_{\mu})=\|\mu\|_{1}$. For $\kappa\in \mathbb{N}_{0}$, denote by $\pi_{\kappa}$ the linear  span
of all $d$-variate  monomials $p_{\mu}$ such that $\hbox{deg}(p_{\mu})\leq \kappa$.
 For the above  $\mu$, its associated
differential operator $D^{\mu}:=D^{\mu_{1}}_{1}\cdots D^{\mu_{d}}_{d}$ where $D^{\mu_{k}}_{k}$
is the partial derivative with respect to the $k$th coordinate.

\subsection{Preliminary of  Sobolev space}\label{positivemd}
For any $\varsigma\in\mathbb{R}$,  the Sobolev space $H^{\varsigma}(\mathbb{R}^{d})$
is defined as
\begin{align}\begin{array}{lllll}\label{defi_s}\displaystyle  H^{\varsigma}(\mathbb{R}^{d}):=\big\{f: \mathbb{R}^d\longrightarrow \mathbb{C}
\ \hbox{such that} \ \int_{\mathbb{R}^{d}}|\widehat{f}(\xi)|^{2} (1+\|\xi\|_{2}^{2})^{\varsigma}d\xi<\infty\big\},\end{array}\end{align}
 where
$\widehat{f}(\xi)=\int_{\mathbb{R}^{d}}f(\emph{\textbf{x}})e^{-\textbf{i}\emph{\textbf{x}}\cdot \xi}d\emph{\textbf{x}}$ is
the Fourier transform of $f$.
For any $f\in H^{\varsigma}(\mathbb{R}^{d})$  the deduced norm $\|f\|_{H^{\varsigma}(\mathbb{R}^{d})}:=\frac{1}{(2\pi)^{d/2}}(\int_{\mathbb{R}^{d}}|\widehat{f}(\xi)|^{2} (1+\|\xi\|_{2}^{2})^{\varsigma}d\xi)^{1/2}$.
Clearly, $H^{0}(\mathbb{R}^{d})=L^{2}(\mathbb{R}^{d})$ and
$H^{\varsigma_{1}}(\mathbb{R}^{d})\subseteq H^{\varsigma_{2}}(\mathbb{R}^{d})$ for  $\varsigma_{1}>\varsigma_{2}$.
The Sobolev smoothness exponent  of $f$ is defined as $\nu_{2}(f)=\sup\{\varsigma\in \mathbb{R}: f\in H^{\varsigma}(\mathbb{R}^{d})\}$.
If $\varsigma>d/2$ then
the functions in  $H^{\varsigma}(\mathbb{R}^{d})$ are continuous (c.f. \cite[Chapter 9.1]{Matla}
or \cite[Section 1.1]{LiSCI}).

For any $f\in H^{\varsigma}(\mathbb{R}^d)$ and $g\in H^{-\varsigma}(\mathbb{R}^d)$, define
$ \displaystyle\langle f, g
\rangle:=\frac{1}{(2\pi)^{d}}\int_{\mathbb{R}^{d}}\widehat{f}(\xi)\overline{\widehat{g}}(\xi)d\xi,$
where $\overline{\widehat{g}}(\xi)$ is the complex conjugate  of $\widehat{g}(\xi)$.
By the Cauchy-Schwarz inequality we have
\begin{align}\label{neijiyouyiyi}\begin{array}{lllll}
\displaystyle|\langle f, g
\rangle|&\displaystyle=\frac{1}{(2\pi)^{d}}|\int_{\mathbb{R}^{d}}\widehat{f}(\xi)(1+\|\xi\|_{2}^{2})^{\varsigma/2}\overline{\widehat{g}}(\xi)
(1+\|\xi\|_{2}^{2})^{-\varsigma/2}d\xi|\\
&\displaystyle\leq\frac{1}{(2\pi)^{d}}\Big(\int_{\mathbb{R}^{d}}|\widehat{f}(\xi)|^{2}(1+\|\xi\|_{2}^{2})^{\varsigma}d\xi\Big)^{1/2}
\Big(\int_{\mathbb{R}^d}|\overline{\widehat{g}}(\xi)|^{2}(1+\|\xi\|_{2}^{2})^{-\varsigma}d\xi\Big)^{1/2}\\
&=\|f\|_{H^{\varsigma}(\mathbb{R}^d)}\|g\|_{H^{-\varsigma}(\mathbb{R}^d)}\\
&<\infty.
\end{array}\end{align}
In this sense, the pair $(H^{\varsigma}(\mathbb{R}^d), H^{-\varsigma}(\mathbb{R}^d))$
is   referred to as a pair of dual Sobolev  spaces, and $\langle f, g
\rangle$ is considered as the inner product of $f$ and $ g$.

\subsection{Preliminary of wavelet in Sobolev space}\label{xiaobosobolev}

In what follows  we make some necessary preparations on wavelet analysis in Sobolev space. For more details on this topic one  can refer to
\cite{waveletBOOK2,waveletBOOK3,waveletBOOK1,Hanbin1,Matla}.

\subsubsection{\textbf{Refinable function}}
We say that  a function  $\phi\in H^{s}(\mathbb{R}^{d})$ is
$2I_{d}$-refinable   if
\begin{align}\label{yydfr}\widehat{\phi}(2\xi)=\widehat{a}(\xi)\widehat{\phi}(\xi),\end{align}
where $\widehat{a}(\xi):=\sum_{\emph{\textbf{k}}\in\mathbb{Z}^{d}}a[\emph{\textbf{k}}]e^{\textbf{i}\emph{\textbf{k}}\cdot\xi}$
is a $2\pi \mathbb{Z}^d$-periodic function and it
is referred to as the mask symbol of $\phi$,  $I_{d}$ is the identity matrix of order $d.$
 It is required that $\widehat{\phi}(\textbf{0})=1$
and consequently $\widehat{a}(\textbf{0})=1.$
There are two typical classes   of refinable functions in Sobolev space.
One is the cardinal  B-splines defined in \eqref{lithography}. The other is the Dirac distribution and its tensor products.
They will be helpful for our discussion.

 Recall that the Fourier transform of the  Dirac distribution
$\delta$ on $\mathbb{R}$ is  $\widehat{\delta}\equiv1.$ From this, $\delta$ is $2$-refinable with the mask symbol $\widehat{\tilde{a}}\equiv1$, and
it is not difficult to check that $\delta\in H^{-s}(\mathbb{R})$ for any $s>1/2.$ By the tensor product  (c.f. \cite{fagep}),
one can define \begin{align}\label{gaoweidelta}\Delta(x_{1}, x_{2}, \ldots, x_{d})=\delta(x_{1})\delta(x_{2})\cdots\delta(x_{d}).\end{align}
Then $\widehat{\Delta}\equiv1$. From this,   $\Delta$ is $2I_{d}$-refinable such that its mask symbol $\widehat{\tilde{a}}\equiv1$ and
$\Delta\in H^{-s}(\mathbb{R}^d)$ for any  $s>d/2.$
By \eqref{neijiyouyiyi}, for any  $f\in H^{s}(\mathbb{R}^d)$
with $s>d/2$ the operation $\langle f, \Delta\rangle$
is well defined. What is more,  $\Delta$ has  the sampling property (\cite{fagep}) such that
\begin{align}\label{samplingproperty}
\langle f, \Delta\rangle=f(\textbf{0}).
\end{align}


\subsubsection{\textbf{Sum rule, approximation accuracy,  vanishing moment and orthogonality}}\label{kmnbvc}
Sum rule,  approximation accuracy, vanishing moment and orthogonality  are   important concepts in wavelet analysis.
Throughout the paper, denote by   $\{\gamma_{0}, \gamma_{1}, \ldots \gamma_{2^{d}-1}\}$ (with $\gamma_{0}=\textbf{0}$)   the  complete set  of representatives of distinct cosets  of the quotient group $ [2^{-1}\mathbb{Z}^{d}]/\mathbb{Z}^{d}$.
Here   we say that the mask symbol $\widehat{a}$  in \eqref{yydfr} has $\kappa+1$ \textbf{sum rules}  if \begin{align}\label{mmbcxx}\widehat{a}(\xi+2\pi\gamma_{j})=\hbox{O}(\|\xi\|^{\kappa+1}_{2})
\ \hbox{as} \ \xi\rightarrow \textbf{0},\end{align}
for  any  $\textbf{0}\neq\gamma_{j}\in [2^{-1}\mathbb{Z}^{d}]/\mathbb{Z}^{d}$.
Additionally, we say that  $\phi$ has the  \textbf{accuracy} order $\kappa+1$ (c.f. \cite[Page 405]{waveletBOOK1}) if for  any $d$-variate  monomial $p_{\mu}$ such that $\hbox{deg}(p_{\mu})\leq\kappa$,
there exists a sequence $\{c_{\emph{\textbf{k}}}\}_{\emph{\textbf{k}}\in \mathbb{Z}^d}\subseteq \mathbb{C}$ such that
\begin{align}\label{jingdu}p_{\mu}=\sum_{\emph{\textbf{k}}\in \mathbb{Z}^d}c_{\emph{\textbf{k}}}\phi(\cdot-\emph{\textbf{k}}).\end{align}
The mask symbol $\widehat{a}$ of    $\phi$ has $\kappa+1$ sum rules if and only if
$\phi$ has the  accuracy order $\kappa+1$ (c.f. \cite{waveletBOOK1,Jiarongqing}).
It is    required in wavelet analysis  that the refinable function $\phi\in L^2(\mathbb{R}^d)$
has at least one sum rule. What is more, the accuracy order $1$ admits that
\begin{align}\label{yydnew} 1\equiv\sum_{\emph{\textbf{k}}\in \mathbb{Z}^{d}}\phi(\cdot-\emph{\textbf{k}}).\end{align}
Such an  interesting  expression is also referred to as the unit decomposition.

We say that a function  $f:
\mathbb{R}^{d}\longrightarrow \mathbb{C}$ has $\kappa+1
(\in\mathbb{N})$ \textbf{vanishing moments}  if
$\frac{\partial^{\alpha}}{\partial \emph{\textbf{x}}^{\alpha}}\widehat{f}(\textbf{0})=0$
for any  $\alpha\in\mathbb{N}^{d}_{0}$ such that $||\alpha||_{1}\leq
\kappa$, where $\emph{\textbf{x}}=(x_{1}, \ldots, x_{d})$, $\alpha=(\alpha_{1}, \ldots, \alpha_{d})$
and $\frac{\partial^{\alpha}}{\partial \emph{\textbf{x}}^{\alpha}}f=\frac{\partial_{1}^{\alpha_{1}}\partial_{2}^{\alpha_{2}}\cdots
\partial_{d}^{\alpha_{d}}}{\partial x_{1}^{\alpha_{1}}\partial x_{2}^{\alpha_{2}}\cdots
\partial x_{d}^{\alpha_{d}}}f.$
We say that the refinable function $\phi\in H^{s}(\mathbb{R}^{d})$ with $s\geq0$ is  orthogonal
if for any $\emph{\textbf{k}}\in \mathbb{Z}^{d}$ the inner product $\int_{\mathbb{R}^d}
\phi(\emph{\textbf{x}})\overline{\phi(\emph{\textbf{x}}-\emph{\textbf{k}})}d\emph{\textbf{x}}$
takes $1$ and $0$ for $\emph{\textbf{k}}=\textbf{0}$ and $\emph{\textbf{k}}\neq\textbf{0},$ respectively.

Now  it is  ready to  recall  the dual  wavelet frames in dual Sobolev spaces (c.f. \cite{Hanbin1}).

\subsubsection{\textbf{Dual wavelet frames in dual Sobolev spaces and the approximation property}}

Suppose that $\{\psi^{\ell}\}^{L}_{\ell=1}$ is a set of   functions
derived from the $2I_{d}$-refinable function $\phi\in H^{s}(\mathbb{R}^{d})$ such that
\begin{align}\label{yy3}\begin{array}{lllll} \widehat{\psi^{\ell}}(2\cdot)=\widehat{b^{\ell}}(\cdot)\widehat{\phi}(\cdot),\end{array}\end{align}
where $\widehat{b^{\ell}}(\cdot)$ is a   $2\pi \mathbb{Z}^{d}$-periodic trigonometric polynomial and is referred to as
the mask symbol of $\psi^{\ell}$. Now
the so-called  wavelet system $X^{s}(\phi;$ $\psi^{1},\ldots,\psi^{L})$ in
$H^{s}(\mathbb{R}^{d})$ is defined as
\begin{align}\begin{array}{lllll}\label{lisan}
X^{s}(\phi;\psi^{1},\ldots,\psi^{L})&:=\{\phi_{0,\emph{\textbf{k}}}:\emph{\textbf{k}}\in
\mathbb{Z}^{d}\}
\\&\quad \cup\{\psi^{\ell,s}_{j,\emph{\textbf{k}}}:\emph{\textbf{k}}\in\mathbb{Z}^{d}, j\in
\mathbb{N}_{0},\ell=1,\ldots,L\},
\end{array}\end{align}
where $\phi_{0,\emph{\textbf{k}}}=\phi(\cdot-\emph{\textbf{k}})$,
$\psi^{\ell,s}_{j,\emph{\textbf{k}}}=2^{j(d/2-s)}\psi^{\ell}(2^{j}\cdot-\emph{\textbf{k}})$ and
$\mathbb{N}_{0}=\mathbb{N}\cup\{0\}$. On the other hand, $\tilde{\phi}$
is a $2I_{d}$-refinable function in $H^{-s}(\mathbb{R}^d)$
such that
\begin{align}\label{yydfrduiou}\widehat{\tilde{\phi}}(2\cdot)=\widehat{\tilde{a}}(\cdot)\widehat{\tilde{\phi}}(\cdot),\end{align}
and
$X^{-s}(\tilde{\phi};$ $\tilde{\psi}^{1},\ldots,\tilde{\psi}^{L})$ is a wavelet system in
$H^{-s}(\mathbb{R}^{d})$ such that
\begin{align}\begin{array}{lllll}\label{lisan}
X^{-s}(\tilde{\phi};\tilde{\psi}^{1},\ldots,\tilde{\psi}^{L})&:=\{\tilde{\phi}_{0,\emph{\textbf{k}}}:\emph{\textbf{k}}\in
\mathbb{Z}^{d}\}
\\&\quad \cup\{\tilde{\psi}^{\ell,-s}_{j,\emph{\textbf{k}}}:\emph{\textbf{k}}\in\mathbb{Z}^{d}, j\in
\mathbb{N}_{0},\ell=1,\ldots,L\},
\end{array}\end{align}
where
\begin{align}\begin{array}{lllll} \label{ABCyy3}\widehat{\tilde{\psi}^{\ell}}(2\cdot)=\widehat{\tilde{b}^{\ell}}(\cdot)\widehat{\tilde{\phi}}(\cdot),
\tilde{\phi}_{0,\emph{\textbf{k}}}=\tilde{\phi}(\cdot-\emph{\textbf{k}}), \tilde{\psi}^{\ell,-s}_{j,\emph{\textbf{k}}}=2^{j(d/2+s)}\tilde{\psi}^{\ell}(2^{j}\cdot-\emph{\textbf{k}}).\end{array}\end{align}
For any $ f\in
H^{s}(\mathbb{R}^{d})$ and $ g\in H^{-s}(\mathbb{R}^{d})$, if
\begin{align}\label{hanbin4}\begin{array}{lllll}\displaystyle \langle f,g\rangle=\sum_{\emph{\textbf{k}}\in \mathbb{Z}^{d}}\langle
\phi_{0,\emph{\textbf{k}}}, g\rangle\langle f, \tilde{\phi}_{0,\emph{\textbf{k}}}\rangle+
\sum^{L}_{\ell=1}\sum_{j\in \mathbb{\mathbb{N}}_{0}}\sum_{\emph{\textbf{k}}\in
\mathbb{Z}^{d}} \langle \psi^{\ell,s}_{j,\emph{\textbf{k}}}, g \rangle\langle f,
\tilde{\psi}^{\ell,-s}_{j,\emph{\textbf{k}}} \rangle,
\end{array}\end{align}
then we say that $X^{s}(\phi;\psi^{1},\ldots,\psi^{L})$ and
$X^{-s}(\tilde{\phi};\tilde{\psi}^{1},\ldots,\tilde{\psi}^{L})$
form a pair of dual $2I_{d}$-wavelet frame  systems for  the dual Sobolev spaces
$(H^{s}(\mathbb{R}^{d}),H^{-s}(\mathbb{R}^{d}))$. Now it follows from \eqref{hanbin4}
that
\begin{align}\label{hanbin5}\begin{array}{lllll}\displaystyle f=\sum_{\emph{\textbf{k}}\in \mathbb{Z}^{d}}\langle f, \tilde{\phi}_{0,\emph{\textbf{k}}}\rangle
\phi_{0,\emph{\textbf{k}}}+
\sum^{L}_{\ell=1}\sum_{j\in \mathbb{\mathbb{N}}_{0}}\sum_{\emph{\textbf{k}}\in
\mathbb{Z}^{d}}\langle f,
\tilde{\psi}^{\ell,-s}_{j,\emph{\textbf{k}}} \rangle\psi^{\ell,s}_{j,\emph{\textbf{k}}},
\end{array}\end{align}
with the series converging in $H^{s}(\mathbb{R}^d).$
Define the truncation of the series w.r.t the level  $j$ as follows,
\begin{align}\label{rt} \mathcal{S}_{\phi}^{N}f:=\sum_{\emph{\textbf{k}}\in
\mathbb{Z}^{d}}\langle f, \tilde{\phi}_{0,\emph{\textbf{k}}}\rangle
\phi_{0,\emph{\textbf{k}}}+\sum^{L}_{\ell=1}\sum^{N-1}_{j=0}\sum_{\emph{\textbf{k}}\in
\mathbb{Z}^{d}} \langle f, \tilde{\psi}^{\ell,-s}_{j,\emph{\textbf{k}}} \rangle
\psi^{\ell,s}_{j,\emph{\textbf{k}}}. \end{align}

%
%
%
%
The following lemma is derived from \cite{LiSCI}.
It provides  the approximation to $f$ by $\mathcal{S}_{N}f$.

\begin{lem}\label{Theorem x1}
Suppose that $X^{s}(\phi;\psi^{1}, \psi^{2},\ldots, \psi^{L})$ and
$X^{-s}(\tilde{\phi};\tilde{\psi}^{1},
 \tilde{\psi}^{2}, \ldots, \tilde{\psi}^{L})$ form  a pair of dual $2I_{d}$-wavelet frame systems
 for $(H^{s}(\mathbb{R}^{d}),H^{-s}(\mathbb{R}^{d}))$. Moreover,    $\phi\in
 H^{\varsigma}(\mathbb{R}^{d})$
and
 $\tilde{\psi}^{\ell}$ has $\kappa+1$ vanishing moments, where  $0<s<\varsigma<\kappa+1$,
$\kappa\in\mathbb{N}_{0}$ and $\ell=1,2, \ldots, L$.
 Then  there exists a positive constant $C(s, \varsigma)$  such that for any $f\in
H^{\varsigma}(\mathbb{R}^{d})$,
\begin{align}\begin{array}{lll} \label{bound1}
\displaystyle
\|f-\mathcal{S}_{\phi}^{N}f\|_{H^{s}(\mathbb{R}^{d})} \leq C(s,
\varsigma)\|f\|_{H^{\varsigma}(\mathbb{R}^{d})} 2^{-N(\varsigma-s)/2},
\end{array}
\end{align}
where $\mathcal{S}_{\phi}^{N}f$ is defined in \eqref{rt}.
\end{lem}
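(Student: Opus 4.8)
The plan is to reduce the estimate to a standard two-scale argument: decay of the wavelet frame coefficients at fine scales controlled by the vanishing moments of $\tilde\psi^\ell$, and boundedness of the synthesis operator built from $X^s(\phi;\psi^1,\dots,\psi^L)$ on $H^s(\mathbb{R}^d)$. Concretely, by \eqref{hanbin5} the tail $f-\mathcal{S}_\phi^N f$ equals $\sum_{\ell=1}^L\sum_{j\ge N}\sum_{\mathbf{k}\in\mathbb{Z}^d}\langle f,\tilde\psi^{\ell,-s}_{j,\mathbf{k}}\rangle\,\psi^{\ell,s}_{j,\mathbf{k}}$, so it suffices to bound the $H^s$-norm of this fine-scale portion. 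First I would record the frame (Bessel) property of the synthesis side: since $X^s(\phi;\psi^1,\dots,\psi^L)$ is part of a dual frame pair for $(H^s,H^{-s})$, the map $(d^\ell_{j,\mathbf{k}})\mapsto \sum_{\ell,j,\mathbf{k}} d^\ell_{j,\mathbf{k}}\psi^{\ell,s}_{j,\mathbf{k}}$ is bounded from $\ell^2$ into $H^s(\mathbb{R}^d)$, with a constant depending only on the system. Hence
\begin{align}\notag
\|f-\mathcal{S}_\phi^N f\|_{H^s(\mathbb{R}^d)}^2 \;\lesssim\; \sum_{\ell=1}^L\sum_{j\ge N}\sum_{\mathbf{k}\in\mathbb{Z}^d}\bigl|\langle f,\tilde\psi^{\ell,-s}_{j,\mathbf{k}}\rangle\bigr|^2 .
\end{align}

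The second step is to estimate the coefficient energy at each level $j$. Using the normalization $\tilde\psi^{\ell,-s}_{j,\mathbf{k}}=2^{j(d/2+s)}\tilde\psi^\ell(2^j\cdot-\mathbf{k})$ and Plancherel, $\langle f,\tilde\psi^{\ell,-s}_{j,\mathbf{k}}\rangle$ is (up to constants) a Fourier coefficient in $\mathbf{k}$ of $2^{-j(d/2-s)}\widehat f(2^{-j}\cdot)\overline{\widehat{\tilde\psi^\ell}(\cdot)}$ folded over $2\pi\mathbb{Z}^d$; summing over $\mathbf{k}$ and applying Parseval on the torus gives
\begin{align}\notag
\sum_{\mathbf{k}\in\mathbb{Z}^d}\bigl|\langle f,\tilde\psi^{\ell,-s}_{j,\mathbf{k}}\rangle\bigr|^2 \;\lesssim\; 2^{2js}\int_{\mathbb{R}^d}|\widehat f(\xi)|^2\,\bigl|\widehat{\tilde\psi^\ell}(2^{-j}\xi)\bigr|^2\,d\xi .
\end{align}
Because $\tilde\psi^\ell$ has $\kappa+1$ vanishing moments, $|\widehat{\tilde\psi^\ell}(\eta)| = \mathrm{O}(\|\eta\|_2^{\kappa+1})$ near $\mathbf{0}$, while it is bounded everywhere; combining these into the single bound $|\widehat{\tilde\psi^\ell}(\eta)|^2 \lesssim \min\{1,\|\eta\|_2^{2(\kappa+1)}\} \lesssim \|\eta\|_2^{2\varsigma}(1+\|\eta\|_2^2)^{-\varsigma}\cdot C$ (valid since $\varsigma<\kappa+1$) and inserting $\eta=2^{-j}\xi$ produces a factor $2^{-2j\varsigma}\|\xi\|_2^{2\varsigma}(1+2^{-2j}\|\xi\|_2^2)^{-\varsigma}\lesssim 2^{-2j\varsigma}(1+\|\xi\|_2^2)^{\varsigma}$. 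Therefore each level contributes $\lesssim 2^{2js}2^{-2j\varsigma}\|f\|_{H^\varsigma(\mathbb{R}^d)}^2 = 2^{-2j(\varsigma-s)}\|f\|_{H^\varsigma(\mathbb{R}^d)}^2$.

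Finally I would sum the geometric series: $\sum_{j\ge N}2^{-2j(\varsigma-s)} = C(s,\varsigma)\,2^{-2N(\varsigma-s)}$ since $\varsigma>s$, and take square roots to obtain \eqref{bound1}. One also has to handle the $\phi_{0,\mathbf{k}}$ term separately, but it does not appear in the tail $f-\mathcal{S}_\phi^N f$ at all — that term is reproduced exactly in $\mathcal{S}_\phi^N f$ — so only the wavelet levels $j\ge N$ remain, which is why the scale-$0$ component drops out. The main obstacle is the second step: making the interchange of the $\mathbf{k}$-sum with the integral rigorous and getting the periodization/Parseval bookkeeping right, together with the careful two-regime estimate on $\widehat{\tilde\psi^\ell}$ that simultaneously exploits the vanishing moments (small frequencies) and plain boundedness (large frequencies); everything else is a geometric summation. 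Since this is exactly the content of the cited reference \cite{LiSCI}, I would either quote the relevant proposition there or reproduce the short argument above.
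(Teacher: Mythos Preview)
The paper does not give a proof of this lemma; the sentence preceding it simply says the result ``is derived from \cite{LiSCI}.'' Your outline is the standard two--scale argument (tail $=\sum_{j\ge N}$, synthesis Bessel bound, level--wise coefficient decay from vanishing moments, geometric sum), and you end by pointing to the same reference, so you and the paper agree on the approach.

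Two technical cautions about your sketch, should you want it to stand on its own. First, the Parseval step
\[
\sum_{\mathbf{k}}\bigl|\langle f,\tilde\psi^{\ell,-s}_{j,\mathbf{k}}\rangle\bigr|^2 \;\lesssim\; 2^{2js}\int_{\mathbb{R}^d}|\widehat f(\xi)|^2\,|\widehat{\tilde\psi^\ell}(2^{-j}\xi)|^2\,d\xi
\]
tacitly uses a bracket bound $\sum_{\mathbf{n}\in\mathbb{Z}^d}|\widehat{\tilde\psi^\ell}(\cdot+2\pi\mathbf{n})|^2\le C$, which is \emph{not} among the stated hypotheses and actually fails in the setting the paper cares about downstream: in Proposition~\ref{caiyangapproximation} one takes $\tilde\phi=\Delta$, whence $\widehat{\tilde\psi^\ell}(\xi)=\widehat{\tilde b^\ell}(\xi/2)$ is $4\pi\mathbb{Z}^d$--periodic and the bracket diverges. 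In that generality one must run the Cauchy--Schwarz with the Sobolev weight $(1+\|2^{j}(\eta+2\pi\mathbf{n})\|_2^2)^{\pm\varsigma}$ inside the periodization, which is the genuinely delicate part and is handled in \cite{LiSCI}. Second, your chain as written yields the rate $2^{-N(\varsigma-s)}$, strictly stronger than the stated $2^{-N(\varsigma-s)/2}$; this is not an error on your side, but it signals that the extra hypothesis you slipped in (the bracket bound, equivalently ``$\widehat{\tilde\psi^\ell}$ bounded at infinity'') bought you more than the lemma claims. If you keep the proof as a citation to \cite{LiSCI}, both points are moot; if you want a self--contained argument, either add the bracket hypothesis explicitly or replace the Parseval step by the weighted version.
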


%
%

 As in subsection \ref{kmnbvc}, $\{\gamma_{0}, \gamma_{1}, \ldots \gamma_{2^{d}-1}\}$ (with $\gamma_{0}=\textbf{0}$) is   the  complete set  of representatives of distinct cosets of the quotient group  $ [2^{-1}\mathbb{Z}^{d}]/\mathbb{Z}^{d}$.
The mixed extension principle (MEP) is an efficient algorithm  (c.f.\cite{Hanbin1}) for designing  dual wavelet frame  systems
$X^{s}(\phi;\psi^{1}, \psi^{2},\ldots, \psi^{L})$ and
$X^{-s}(\tilde{\phi};\tilde{\psi}^{1},
 \tilde{\psi}^{2}, \ldots, \tilde{\psi}^{L})$.
Particularly, it  states that if the mask symbols $(\widehat{a}; \widehat{b}_{1}, \ldots, \widehat{b}_{L})$
and $(\widehat{\tilde{a}}; \widehat{\tilde{b}}_{1}, \ldots, \widehat{\tilde{b}}_{L})$
of $(\phi;\psi^{1}, \psi^{2},\ldots, \psi^{L})$ and $(\tilde{\phi};\tilde{\psi}^{1},
 \tilde{\psi}^{2}, \ldots, \tilde{\psi}^{L})$ satisfy
\begin{align}\label{fffff56}\begin{array}{llllll}
\Big(\overline{\widehat{b}_{j}}(\cdot+2\pi\gamma_{k})\Big)^{2^d-1; L}_{k=0; j=1}
(\widehat{\tilde{b}}_{1}, \ldots, \widehat{\tilde{b}}_{L})^{T}&=(1-\overline{\widehat{a}}(\cdot+2\pi\gamma_{0})\widehat{\tilde{a}}(\cdot),
-\overline{\widehat{a}}(\cdot+2\pi\gamma_{1})\widehat{\tilde{a}}(\cdot), \ldots,\\ &\quad\quad-\overline{\widehat{a}}(\cdot+2\pi\gamma_{2^d-1})\widehat{\tilde{a}}(\cdot))^{T}
\end{array}
\end{align}
 %
then $X^{s}(\phi;\psi^{1}, \psi^{2},\ldots, \psi^{L})$ and
$X^{-s}(\tilde{\phi};\tilde{\psi}^{1},
 \tilde{\psi}^{2}, \ldots, \tilde{\psi}^{L})$ are a pair of dual $2I_{d}$-wavelet frames in $(H^s(\mathbb{R}^d), H^{-s}(\mathbb{R}^d))$.
The mask symbols $\widehat{b}_{1}, \ldots, \widehat{b}_{L};
\widehat{\tilde{b}}_{1},$
$ \ldots, \widehat{\tilde{b}}_{L}$ are not difficult to construct.
For example,
suppose that  $L=2^d-1$ and choose $\{\widehat{b}_{2^{d}}\}\cup\{\widehat{b}_{l}: l=1, \ldots, 2^{d}-1\}$
as the mask symbols of  orthogonal refinable function and  wavelets
$\{\mathring{\phi}; \mathring{\psi}^{1}, \ldots, \mathring{\psi}^{2^{d}-1}\}$
in $L^{2}(\mathbb{R}^{d})$ such that $\widehat{\mathring{\phi}}(2\xi)=\widehat{b}_{2^{d}}(\xi)
\widehat{\mathring{\phi}}(\xi)$ and $\widehat{\mathring{\psi}^{l}}(2\xi)=
\widehat{b}_{l}(\xi)
\widehat{\mathring{\phi}}(\xi), l=1, \ldots, 2^{d}-1$.
Then
\begin{align}\label{peigong}\Big(\overline{\widehat{b}_{j}}(\cdot+2\pi\gamma_{k})\Big)^{2^d-1; 2^d}_{k=0; j=1}
\end{align}
is a paraunitary matrix.  Now  $\{\widehat{\tilde{b}}_{l}: l=1, \ldots, 2^{d}-1\}$
can be calculated by \eqref{fffff56}.
If \eqref{fffff56} holds then it follows from \eqref{yy3} and $\widehat{\tilde{\psi}^{\ell}}(2\cdot)=\widehat{\tilde{b}^{\ell}}(\cdot)\widehat{\tilde{\phi}}(\cdot)$ in \eqref{ABCyy3} that   $\mathcal{S}^{N}_{\phi}f$ in \eqref{rt} can be expressed as the so called $N$-level  approximation
\begin{align}\label{OPPP2}\begin{array}{llllll}
\mathcal{S}^{N}_{\phi}f&\displaystyle=\sum_{\emph{\textbf{k}}\in
\mathbb{Z}^{d}}\langle f, \widetilde{\phi}_{N,\emph{\textbf{k}}}\rangle\phi_{N,\emph{\textbf{k}}}\\
&\displaystyle=\sum_{\emph{\textbf{k}}\in
\mathbb{Z}^{d}}\langle f, 2^{N(d/2+s)}\widetilde{\phi}(2^{N}\cdot-\emph{\textbf{k}})\rangle2^{N(d/2-s)}\phi(2^N\cdot-\emph{\textbf{k}})\\
&\displaystyle=\sum_{\emph{\textbf{k}}\in
\mathbb{Z}^{d}}2^{Nd}\langle f, \widetilde{\phi}(2^{N}\cdot-\emph{\textbf{k}})\rangle\phi(2^N\cdot-\emph{\textbf{k}}). \quad  (\ref{OPPP2}A)
\end{array}
\end{align}

\subsubsection{\textbf{Sampling-based approximation in Sobolev space}}
The following sampling-based  approximation will be helpful in next section. It can be derived from
\cite{LiSCI}. For the self-contained purpose, we provide its proof here.

\begin{prop}\label{caiyangapproximation}
Suppose that $\phi\in H^{s}(\mathbb{R}^d)$ is $2I_{d}$-refinable such that
its mask symbol $\widehat{a}$ has $\kappa+1$ sum rules, where $s>d/2$ and $\kappa+1>s$. Moreover, $\phi\in H^{\varsigma}(\mathbb{R}^d)$ such that  $\kappa+1>\varsigma>s.$
Then there exists a constant $C(s,\varsigma)$ such that for any $f\in H^{\varsigma}(\mathbb{R}^d)$ there holds
\begin{align}\label{cxzbc}
\|f-\sum_{\emph{\textbf{k}}\in \mathbb{Z}^d}f(2^{-N}\emph{\textbf{k}})\phi(2^N\cdot-\emph{\textbf{k}})\|_{H^{s}(\mathbb{R}^d)}\leq C(s,
\varsigma)||f||_{H^{\varsigma}(\mathbb{R}^{d})} 2^{-N(\varsigma-s)/2}.
\end{align}
\end{prop}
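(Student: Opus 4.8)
The plan is to deduce the sampling estimate from the quasi-interpolation estimate already available in Lemma \ref{Theorem x1}, by comparing the sampling operator $f\mapsto \sum_{\emph{\textbf{k}}}f(2^{-N}\emph{\textbf{k}})\phi(2^{N}\cdot-\emph{\textbf{k}})$ with the dual-frame truncation operator $\mathcal{S}^{N}_{\phi}$ written in the form $(\ref{OPPP2}A)$. First I would observe that, since $s>d/2$, the hypothesis $\phi\in H^{s}(\mathbb{R}^{d})$ forces $\phi$ to be continuous with a well-defined sampling functional, and the sampling property \eqref{samplingproperty} of the Dirac tensor $\Delta$ makes the dual refinable function in the MEP construction behave like a point-evaluation: concretely, one takes $\widetilde{\phi}=\Delta$ (the $2I_{d}$-refinable distribution with $\widehat{\widetilde{a}}\equiv1$, lying in $H^{-s}(\mathbb{R}^{d})$ for $s>d/2$), so that $2^{Nd}\langle f,\widetilde{\phi}(2^{N}\cdot-\emph{\textbf{k}})\rangle = \langle f(2^{-N}(\cdot+\emph{\textbf{k}})),\Delta\rangle = f(2^{-N}\emph{\textbf{k}})$. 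With this choice the right-hand side of $(\ref{OPPP2}A)$ is exactly the sampling sum appearing in \eqref{cxzbc}.

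The key steps, in order, are: (i) verify that the pair $(\phi,\Delta)$ together with wavelet masks supplied by the MEP equation \eqref{fffff56} — using a paraunitary block $(\ref{peigong})$ built from an orthogonal refinable function/wavelet system in $L^{2}(\mathbb{R}^{d})$ — forms a pair of dual $2I_{d}$-wavelet frames for $(H^{s}(\mathbb{R}^{d}),H^{-s}(\mathbb{R}^{d}))$; here the only nontrivial analytic point is that $\Delta\in H^{-s}(\mathbb{R}^{d})$, which is immediate from $\widehat{\Delta}\equiv1$ and $s>d/2$, and that the frame inequalities hold, which is the content of the MEP since \eqref{fffff56} is satisfied by construction. (ii) Check the vanishing-moment hypothesis of Lemma \ref{Theorem x1}: because $\widehat{a}$ has $\kappa+1$ sum rules and $\widehat{\widetilde{a}}\equiv1$, the MEP identity \eqref{fffff56} forces the dual wavelet masks $\widehat{\widetilde{b}}^{\ell}$ to vanish to order $\kappa+1$ at the origin (the right-hand side entries $-\overline{\widehat{a}}(\cdot+2\pi\gamma_{j})\widehat{\widetilde{a}}(\cdot)$ are $\mathrm{O}(\|\xi\|_{2}^{\kappa+1})$ for $\gamma_{j}\neq\textbf{0}$, and the $\gamma_{0}$ entry $1-\overline{\widehat{a}}(\cdot)\widehat{\widetilde{a}}(\cdot)=1-\overline{\widehat{a}}(\cdot)$ also vanishes to that order by the sum rules combined with $\widehat{a}(\textbf{0})=1$), hence each $\widetilde{\psi}^{\ell}$ has $\kappa+1$ vanishing moments. (iii) Apply Lemma \ref{Theorem x1} with this pair and with the stated exponents $0<s<\varsigma<\kappa+1$ to obtain $\|f-\mathcal{S}^{N}_{\phi}f\|_{H^{s}(\mathbb{R}^{d})}\leq C(s,\varsigma)\|f\|_{H^{\varsigma}(\mathbb{R}^{d})}2^{-N(\varsigma-s)/2}$. (iv) Rewrite $\mathcal{S}^{N}_{\phi}f$ via $(\ref{OPPP2}A)$ and the identity $2^{Nd}\langle f,\widetilde{\phi}(2^{N}\cdot-\emph{\textbf{k}})\rangle=f(2^{-N}\emph{\textbf{k}})$ from step (i), which turns the left-hand side into exactly the quantity in \eqref{cxzbc}, completing the proof.

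The main obstacle I anticipate is step (i), specifically making rigorous the claim that the dual frame truncation \emph{with the distributional dual $\Delta$} still satisfies all the hypotheses of Lemma \ref{Theorem x1} — in particular that $X^{-s}(\Delta;\widetilde{\psi}^{1},\dots,\widetilde{\psi}^{L})$ is a genuine wavelet system in $H^{-s}(\mathbb{R}^{d})$ and that \eqref{hanbin4} holds pairing $f\in H^{s}$ against $g\in H^{-s}$. This requires confirming that all the inner products $\langle f,\widetilde{\phi}_{0,\emph{\textbf{k}}}\rangle$, $\langle f,\widetilde{\psi}^{\ell,-s}_{j,\emph{\textbf{k}}}\rangle$ are well defined (guaranteed by \eqref{neijiyouyiyi} together with $\Delta,\widetilde{\psi}^{\ell}\in H^{-s}$) and that the Bessel/frame bounds are finite; all of this is encapsulated in the MEP statement as recalled in the excerpt, so the bulk of the argument reduces to bookkeeping. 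A secondary, purely technical point is tracking that the normalization constants $2^{j(d/2\pm s)}$ in \eqref{ABCyy3} telescope correctly so that the level-$N$ truncation \eqref{rt} collapses to the single-scale form $(\ref{OPPP2}A)$; this is exactly the computation displayed in \eqref{OPPP2} and needs only to be invoked.
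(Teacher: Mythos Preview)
Your proposal is correct and follows essentially the same route as the paper: choose $\widetilde{\phi}=\Delta$, build the dual frame via the MEP with a paraunitary block \eqref{peigong}, verify that the dual wavelets $\widetilde{\psi}^{\ell}$ have $\kappa+1$ vanishing moments, and then invoke Lemma~\ref{Theorem x1} together with the sampling property \eqref{samplingproperty} and the single-scale form $(\ref{OPPP2}A)$.

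One correction in your step (ii): the sum rules of $\widehat{a}$ do \emph{not} imply that $1-\overline{\widehat{a}}(\xi)$ vanishes to order $\kappa+1$ at the origin; sum rules only control $\widehat{a}$ near the half-integer points $2\pi\gamma_{j}$ with $\gamma_{j}\neq\textbf{0}$, and at the origin one only knows $\widehat{a}(\textbf{0})=1$. The vanishing of each $\widehat{\widetilde{b}}_{\ell}$ to order $\kappa+1$ at $\textbf{0}$ follows instead from the rows $\gamma_{k}\neq\textbf{0}$ of \eqref{fffff56} alone: those right-hand entries are $O(\|\xi\|_{2}^{\kappa+1})$ by the sum rules, and the $(2^{d}-1)\times(2^{d}-1)$ coefficient submatrix $(\overline{\widehat{b}_{\ell}}(\xi+2\pi\gamma_{k}))_{k\neq 0,\;\ell=1,\dots,2^{d}-1}$ is invertible near $\xi=\textbf{0}$ as a consequence of the paraunitarity of \eqref{peigong}. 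This is precisely the ingredient the paper invokes (``the matrix in \eqref{peigong} is a paraunitary matrix''), so once you replace your $k=0$ row argument with this observation your proof coincides with the paper's.
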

\begin{proof}
We choose the  $2I_{d}$-refinable function  $\tilde{\phi}=\Delta$  defined in \eqref{gaoweidelta}.
Then $\widehat{\Delta}\equiv1$ and consequently $\Delta\in H^{-\eta}(\mathbb{R}^d)$ for any  $\eta>d/2.$ By \eqref{fffff56} and  \eqref{peigong}
one can construct the dual $2I_{d}$-wavelet frames $(\phi; \psi^{1}, \ldots, \psi^{2^d-1})$
and $(\tilde{\phi}; \tilde{\psi}^{1}, \ldots, \tilde{\psi}^{2^d-1})$ in $(H^{s}(\mathbb{R}^d), H^{-s}(\mathbb{R}^d))$.
Next we  prove that $\tilde{\psi}^{1}, \ldots, \tilde{\psi}^{2^d-1}$
all have $\kappa+1$ vanishing moments. Since the mask symbol  $\widehat{a}$ has $\kappa+1$ sum rules,
then it follows from \eqref{mmbcxx} that
\begin{align}\label{fanzeng}\widehat{a}(\xi+2\pi\gamma_{j})=\hbox{O}(||\xi||^{\kappa+1}_{2})
 \ \hbox{as} \ \xi\rightarrow \textbf{0},\end{align}
for  any  representative $\textbf{0}\neq\gamma_{j}$ of the  cosets of   $[2^{-1}\mathbb{Z}^{d}]/\mathbb{Z}^{d}$. Recall again that the matrix in \eqref{peigong}
is a paraunitary matrix, and the mask symbol of $\tilde{\phi}$ is  $\widehat{\tilde{a}}\equiv1.$ Combining this, \eqref{fanzeng} and  \eqref{fffff56},
we  have  that $\frac{\partial^{\alpha}}{\partial \emph{\textbf{x}}^{\alpha}}\widehat{\tilde{b}}_{l}(\textbf{0})=0, l=1, \ldots, 2^d-1.$
Now by \eqref{ABCyy3}, $\tilde{\psi}^{1}, \ldots, \tilde{\psi}^{2^d-1}$
 have $\kappa+1$ vanishing moments. The rest of the proof can be completed by \eqref{bound1} and the sampling property
 $\langle f, \Delta\rangle=f(\textbf{0})$.
\end{proof}

Based on Proposition \ref{caiyangapproximation}, in what follows we establish the  Lipschitz continuity of $f\in H^{s}(\mathbb{R}^{d})$ and
the bound for $\sup_{\emph{\textbf{x}}\in \mathbb{R}^{d}}|f(\emph{\textbf{x}})|$, where $s>d/2.$

\begin{prop}\label{1stlemma} There exists a constant $\widehat{C}(s,
\varsigma)\geq1$ such that for any $f\in H^{s}(\mathbb{R}^{d})$  satisfying $\nu_{2}(f)>\varsigma>s>d/2$, there holds
\begin{align}\label{ghgh} \sup_{\emph{\textbf{x}}\in \mathbb{R}^{d}}|f(\emph{\textbf{x}})|\leq \widehat{C}(s,
\varsigma)\|f\|_{H^{\varsigma}(\mathbb{R}^{d})}.\end{align}
Moreover, for any $\eta\in \mathbb{R}^{d}$ it holds that
\begin{align}\label{ghgh1}\sup_{\emph{\textbf{x}}\in \mathbb{R}^d}|f(\emph{\textbf{x}})-f(\emph{\textbf{x}}+\eta)|\leq \widehat{C}(s,\varsigma)2^{1-\zeta}\|f\|_{H^{\varsigma}(\mathbb{R}^{d})}\|\eta\|_{2}^{\zeta},\end{align}
where $\zeta=\min\{1, \varsigma-s\}$.
\end{prop}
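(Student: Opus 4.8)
The plan is to deduce both estimates from the sampling-based approximation in Proposition~\ref{caiyangapproximation} applied to a suitable refinable function $\phi$, chosen so that $\phi$ is \emph{continuous with compact support} (e.g. a cardinal B-spline $B_m$ with $m$ large enough that $\min\{\nu_2(B_m),\mathrm{sr}_{B_m}\}=m-1/2>\varsigma$). The key observation is that the approximant $f_N:=\sum_{\mathbf{k}\in\mathbb{Z}^d}f(2^{-N}\mathbf{k})\,\phi(2^N\cdot-\mathbf{k})$ is, for fixed $N$, a \emph{finite linear combination of shifts of $\phi$ on any bounded region} (because $\phi$ has compact support), hence inherits continuity and a computable Lipschitz-type modulus of continuity from $\phi$; meanwhile Proposition~\ref{caiyangapproximation} controls $\|f-f_N\|_{H^s(\mathbb{R}^d)}$ and, since $s>d/2$, also $\|f-f_N\|_{L^\infty(\mathbb{R}^d)}$ via the Sobolev embedding (the embedding constant is absorbed into $\widehat C(s,\varsigma)$). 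So the strategy is: control $\sup|f|$ and the increments of $f$ by splitting $f=f_N+(f-f_N)$ and optimizing over the level $N$.

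First I would prove \eqref{ghgh}. Take any $N\in\mathbb{N}_0$. By the Sobolev embedding $H^s\hookrightarrow L^\infty$ for $s>d/2$ and Proposition~\ref{caiyangapproximation}, $\|f-f_N\|_{L^\infty}\le C_1(s,\varsigma)\|f\|_{H^\varsigma}2^{-N(\varsigma-s)/2}$; for $\|f_N\|_{L^\infty}$ one uses $|f_N(\mathbf{x})|\le\big(\sup_{\mathbf{y}}|f(\mathbf{y})|\big)\sum_{\mathbf{k}}|\phi(2^N\mathbf{x}-\mathbf{k})|\le \|\phi\|_{\ell^1\text{-shifts}}\sup|f|$, where $\sum_{\mathbf{k}}|\phi(\cdot-\mathbf{k})|$ is a bounded $\mathbb{Z}^d$-periodic function since $\phi$ is bounded with compact support. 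This would only give $\sup|f|\le \sup|f|+(\text{small})$, which is circular; to break the circularity, instead bound $\|f_N\|_{L^\infty}$ directly by $\|f-f_N\|_{L^\infty}+\|f\|_{L^\infty}$ is also circular — so the honest route is simply: $\sup|f|=\|f\|_{L^\infty}\le c_{\text{emb}}\|f\|_{H^s}\le c_{\text{emb}}\|f\|_{H^\varsigma}$ directly from the Sobolev embedding with $s>d/2$, and set $\widehat C(s,\varsigma):=\max\{1,c_{\text{emb}}\}$. (The sampling approximation is not actually needed for \eqref{ghgh}; it is needed for \eqref{ghgh1}.)

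For \eqref{ghgh1}, fix $\eta\in\mathbb{R}^d$ with $\eta\neq\mathbf{0}$ (the case $\eta=\mathbf{0}$ is trivial) and let $N$ be the unique integer with $2^{-N}\le\|\eta\|_2<2^{-N+1}$, equivalently $2^{-N}\le\|\eta\|_2$ and $2^{N-1}\le\|\eta\|_2^{-1}$. Write
\begin{align*}
|f(\mathbf{x})-f(\mathbf{x}+\eta)|\le |f(\mathbf{x})-f_N(\mathbf{x})|+|f_N(\mathbf{x})-f_N(\mathbf{x}+\eta)|+|f_N(\mathbf{x}+\eta)-f(\mathbf{x}+\eta)|.
\end{align*}
The first and third terms are each $\le C_1(s,\varsigma)\|f\|_{H^\varsigma}2^{-N(\varsigma-s)/2}$ by Proposition~\ref{caiyangapproximation} and the embedding $H^s\hookrightarrow L^\infty$. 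For the middle term, $f_N-f_N(\cdot+\eta)=\sum_{\mathbf{k}}f(2^{-N}\mathbf{k})\big(\phi(2^N\cdot-\mathbf{k})-\phi(2^N(\cdot+\eta)-\mathbf{k})\big)$; using the uniform Lipschitz (or Hölder) continuity of the fixed compactly supported continuous $\phi$ — for $\phi=B_m$, $m\ge2$, $\phi$ is genuinely Lipschitz, with $|\phi(\mathbf{u})-\phi(\mathbf{v})|\le L_\phi\|\mathbf{u}-\mathbf{v}\|_2$ — together with the uniformly bounded number of nonzero terms and $\sup|f|\le\widehat C(s,\varsigma)\|f\|_{H^\varsigma}$ from \eqref{ghgh}, one gets $\|f_N-f_N(\cdot+\eta)\|_{L^\infty}\le C_2\,\|f\|_{H^\varsigma}\,2^N\|\eta\|_2\le 2C_2\,\|f\|_{H^\varsigma}$. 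Combining, $\|f-f(\cdot+\eta)\|_{L^\infty}\le C_3\|f\|_{H^\varsigma}\,2^{-N(\varsigma-s)/2}$ plus the bounded-by-constant middle term; now substitute $2^{-N}\le\|\eta\|_2$ in the first/third terms and $2^N\le 2\|\eta\|_2^{-1}$ in the middle term, and then interpolate: both contributions are dominated by $\|f\|_{H^\varsigma}\|\eta\|_2^{\zeta}$ with $\zeta=\min\{1,\varsigma-s\}$ after a short case analysis ($\zeta=1$ when $\varsigma-s\ge1$, using the middle term's linear-in-$\|\eta\|_2$ bound and absorbing the $2^{-N(\varsigma-s)/2}\le 2^{-N/2}\lesssim\|\eta\|_2^{1/2}\le\|\eta\|_2^{?}$... here one must be slightly careful and instead optimize the split level, not tie it rigidly to $\|\eta\|_2$). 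The cleanest bookkeeping: choose $N$ to balance $2^{-N(\varsigma-s)/2}$ against $2^N\|\eta\|_2$, i.e. $2^{N(1+(\varsigma-s)/2)}\approx\|\eta\|_2^{-1}$, giving the common exponent $\zeta=(\varsigma-s)/(2+(\varsigma-s))$ when $\varsigma-s<$ something, and $\zeta=1$ otherwise — but since the paper states $\zeta=\min\{1,\varsigma-s\}$, I expect the intended argument just takes $2^{-N}\sim\|\eta\|_2$ and uses that for the middle term $2^N\|\eta\|_2\le 2$ is bounded (exponent $1$ on $\|\eta\|$ when we keep one factor $\|\eta\|_2$) while the approximation terms give $\|\eta\|_2^{(\varsigma-s)/2}$ — and then one notes $(\varsigma-s)/2$ may exceed or be below $1$... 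The exponent in the statement is $\zeta=\min\{1,\varsigma-s\}$, so I would trust that with $2^{-N}\sim\|\eta\|_2$ the approximation terms are $O(\|\eta\|_2^{(\varsigma-s)/2})$ and the Lipschitz term is $O(\|\eta\|_2)$, and since for $\|\eta\|_2\le 1$ we have $\|\eta\|_2^a\le\|\eta\|_2^b$ when $a\ge b$, the overall bound is $O(\|\eta\|_2^{\min\{1,(\varsigma-s)/2\}})$; if the paper really wants $\min\{1,\varsigma-s\}$ one refines by applying Proposition~\ref{caiyangapproximation} in the $H^{s'}$ norm for an intermediate $s'$ close to $\varsigma$, trading the rate.

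The main obstacle, then, is precisely this exponent bookkeeping in \eqref{ghgh1}: getting the stated $\zeta=\min\{1,\varsigma-s\}$ (rather than a weaker interpolation exponent) requires choosing the truncation level and the auxiliary Sobolev index cleverly — in particular one should apply the approximation estimate with target norm $H^{s'}$ for $d/2<s'<\varsigma$ \emph{arbitrarily close to $\varsigma$} so that the approximation error is $O(2^{-N(\varsigma-s')/2})$ with $\varsigma-s'$ nearly the full gap, while still retaining $H^{s'}\hookrightarrow L^\infty$; then balancing against the $2^N\|\eta\|_2$ Lipschitz term yields an exponent approaching $\min\{1,\varsigma-s\}$, and a final limiting/absorption argument (raising the constant $\widehat C$) delivers the clean statement. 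Everything else — the Sobolev embedding, the compact support and Lipschitz continuity of $B_m$, the finite-overlap bound on $\sum_{\mathbf{k}}|\phi(\cdot-\mathbf{k})|$ — is routine and can be cited or done in one line.
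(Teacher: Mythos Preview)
Your treatment of \eqref{ghgh} is fine and in fact more direct than the paper's: you just invoke the Sobolev embedding $H^s\hookrightarrow L^\infty$ for $s>d/2$, whereas the paper takes an \emph{orthogonal} refinable $\phi$ and argues
\[
|f(\mathbf{0})|\le\Big(\sum_{\mathbf{j}}|f(\mathbf{j})|^2\Big)^{1/2}
=\Big\|\sum_{\mathbf{j}}f(\mathbf{j})\phi(\cdot-\mathbf{j})\Big\|_{L^2}
\le \|f\|_{L^2}+C(s,\varsigma)\|f\|_{H^\varsigma},
\]
using orthonormality for the equality and Proposition~\ref{caiyangapproximation} (at level $N=0$) for the last step, then shifts. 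Either route works.

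Your approach to \eqref{ghgh1}, however, has a genuine gap in the exponent. With $2^{-N}\sim\|\eta\|_2$ the Lipschitz middle term is of size $C\,\|f\|_{H^\varsigma}\cdot 2^N\|\eta\|_2\asymp C\,\|f\|_{H^\varsigma}$, i.e.\ it does \emph{not} decay in $\eta$; and if you instead balance the approximation term $2^{-N(\varsigma-s)/2}$ against the Lipschitz term $2^N\|\eta\|_2$, the optimal exponent is $(\varsigma-s)/(2+(\varsigma-s))$, strictly smaller than $\min\{1,\varsigma-s\}$. Your proposed fix---taking the approximation error in $H^{s'}$ with $s'$ close to $\varsigma$---goes the wrong way: that makes $\varsigma-s'$ \emph{small}, so the rate $2^{-N(\varsigma-s')/2}$ degenerates rather than improves. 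No choice of level and auxiliary index in this split will produce $\min\{1,\varsigma-s\}$.

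The paper bypasses all of this with a one-line Fourier estimate: since $|1-e^{-\mathbf{i}\eta\cdot\xi}|^{2}=4\sin^{2}(\eta\cdot\xi/2)\le 4|\sin(\eta\cdot\xi/2)|^{2\zeta}\le 4\big(\tfrac{1}{2}\|\eta\|_2\|\xi\|_2\big)^{2\zeta}$ for any $\zeta\le 1$, and $\|\xi\|_2^{2\zeta}(1+\|\xi\|_2^2)^{s}\le(1+\|\xi\|_2^2)^{\varsigma}$ whenever $\zeta\le\varsigma-s$, one gets directly
\[
\|f-f(\cdot+\eta)\|_{H^s(\mathbb{R}^d)}\le 2^{1-\zeta}\|\eta\|_2^{\zeta}\,\|f\|_{H^\varsigma(\mathbb{R}^d)}
\]
with $\zeta=\min\{1,\varsigma-s\}$, and then applies the embedding $H^s\hookrightarrow L^\infty$ (equivalently \eqref{ghgh} with suitable indices) to pass to the sup-norm. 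No level-selection or balancing is needed.
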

\begin{proof}
Suppose that $\phi\in H^{s}(\mathbb{R}^{d})$ is an orthogonal  refinable  function
such that for any $\emph{\textbf{k}}\in \mathbb{Z}^{d}$, the inner product  $\int_{\mathbb{R}^d}
\phi(\emph{\textbf{x}})\overline{\phi(\emph{\textbf{x}}-\emph{\textbf{k}})}d\emph{\textbf{x}}$
takes   $1$ and $0$ for $\emph{\textbf{k}}=\textbf{0}$ and $\emph{\textbf{k}}\neq\textbf{0},$ respectively.
Moreover, the sum rule order $\hbox{sr}_{\phi}\geq1$. Then by Proposition \ref{caiyangapproximation}
 there exists $C(s,
\varsigma)>0$ (being independent of $f$) such that
\begin{align}\begin{array}{lll} \label{bound67890}
\displaystyle
\big\|f-\sum_{\emph{\textbf{k}}\in \mathbb{Z}^d}f(2^{-N}\emph{\textbf{k}})\phi(2^{N}\cdot-\emph{\textbf{k}})\big\|_{L^{2}(\mathbb{R}^d)}&\leq
\displaystyle\|f-\sum_{\emph{\textbf{k}}\in \mathbb{Z}^d}f(2^{-N}\emph{\textbf{k}})\phi(2^N\cdot-\emph{\textbf{k}})\|_{H^{s}(\mathbb{R}^d)}\\
 &\leq C(s,
\varsigma)\|f\|_{H^{\varsigma}(\mathbb{R}^{d})} 2^{-N(\varsigma-s)/2}.
\end{array}
\end{align}
Then for any  $\emph{\textbf{k}}\in \mathbb{Z}^{d}$, we have
\begin{align}\label{09876}
 \begin{array}{lll} |f(\emph{\textbf{k}})|&\leq\displaystyle\big(\sum_{\emph{\textbf{j}}\in \mathbb{Z}^{d}}|f(\emph{\textbf{j}})|^{2}\big)^{1/2}\\
 &=\displaystyle\|\sum_{\emph{\textbf{j}}\in \mathbb{Z}^{d}}f(\emph{\textbf{j}})\phi(\cdot-\emph{\textbf{j}})\|_{L^2(\mathbb{R}^d)} \quad   (\ref{09876} A)\\
 &\leq \displaystyle
 \|f-\sum_{\emph{\textbf{j}}\in \mathbb{Z}^{d}}f(\emph{\textbf{j}})\phi(\cdot-\emph{\textbf{j}})\|_{L^2(\mathbb{R}^d)}+\|f\|_{L^2(\mathbb{R}^d)}\\
 &\leq C(s,
\varsigma)\|f\|_{H^{\varsigma}(\mathbb{R}^{d})}+\|f\|_{H^{\varsigma}(\mathbb{R}^{d})} \quad   (\ref{09876} B)\\
&=(1+C(s,
\varsigma))\|f\|_{H^{\varsigma}(\mathbb{R}^{d})}\\
&:=\widehat{C}(s, \varsigma)\|f\|_{H^{\varsigma}(\mathbb{R}^{d})},\end{array}\end{align}
where (\ref{09876}$A$) and (\ref{09876}$B$) is derived from the orthogonality of $\phi$
and \eqref{bound67890}, respectively.
Define $F_{\emph{\textbf{x}}_{0}}:=f(\cdot+\emph{\textbf{x}}_{0})$ for any fixed  $\emph{\textbf{x}}_{0}\in \mathbb{R}^d$. It is straightforward  to check that
$F_{\emph{\textbf{x}}_{0}}\in H^{\varsigma}(\mathbb{R}^{d})$ and
$\|F_{\emph{\textbf{x}}_{0}}\|_{H^{\varsigma}(\mathbb{R}^{d})}=\|f\|_{H^{\varsigma}(\mathbb{R}^{d})}.$ Applying \eqref{09876}
 with $\emph{\textbf{k}}=\textbf{0}$ to $F_{\emph{\textbf{x}}_{0}}$, we have
$|f(\emph{\textbf{x}}_{0})|\leq \widehat{C}(s,
\varsigma)\|f\|_{H^{\varsigma}(\mathbb{R}^{d})}.$
Then \eqref{ghgh} holds.

On the other hand,
\begin{align}\label{dddf}\begin{array}{lllll}
 \displaystyle\|f-f(\cdot+\eta)\|_{H^{s}(\mathbb{R}^{d})}\\
= \displaystyle\Big[\frac{1}{(2\pi)^{d}}\int_{\mathbb{R}^{d}}|\widehat{f}(\xi)(1-e^{-\textbf{i}\eta\cdot\xi})|^{2}
(1+\|\xi\|^{2}_{2})^{s}d\xi\Big]^{1/2}\\
 \displaystyle\leq\Big[\frac{1}{(2\pi)^{d}}\int_{\mathbb{R}^{d}}4|\sin\big(\frac{1}{2}\eta\cdot\xi\big)|^{2\zeta}
 \displaystyle|\widehat{f}(\xi)|^{2}(1+\|\xi\|^{2}_{2})^{s}d\xi\Big]^{1/2} \quad   (\ref{dddf} A) \\
 \displaystyle\leq2\Big[\frac{(\|\eta\|_{2}/2)^{2\zeta}}{(2\pi)^{d}}\int_{\mathbb{R}^{d}}\|\xi\|_{2}^{2\zeta}
|\widehat{f}(\xi)|^{2}(1+\|\xi\|^{2}_{2})^{s}d\xi\Big]^{1/2} \quad   (\ref{dddf} B) \\
\displaystyle\leq2\Big[\frac{(\|\eta\|_{2}/2)^{2\zeta}}{(2\pi)^{d}}\int_{\mathbb{R}^{d}}(1+\|\xi\|^2_{2})^{\zeta}
|\widehat{f}(\xi)|^{2}(1+\|\xi\|^{2}_{2})^{s}d\xi\Big]^{1/2}\\
\leq\displaystyle2^{1-\zeta}\|\eta\|_{2}^{\zeta}\|f\|_{H^{\varsigma}(\mathbb{R}^{d})},
\quad   (\ref{dddf} C)\end{array}\end{align}
where (\ref{dddf}$A$) is derived from $|1-e^{-\textbf{i}\eta\cdot\xi}|=2|\sin(\eta/2\cdot\xi)|$
and $\zeta\leq1$, (\ref{dddf}$B$) is  from $|\sin(\eta/2\cdot\xi)|\leq |\eta/2\cdot\xi|\leq \frac{1}2\|\eta\|_{2}\|\xi\|_{2}$,
and  (\ref{dddf}$C$) is from $\zeta\leq \varsigma-s.$ Then
\begin{align}\label{budengshi}\begin{array}{lllll} \sup_{\emph{\textbf{x}}\in \mathbb{R}^d}|f(\emph{\textbf{x}}+\eta)-f(\emph{\textbf{x}})|
&\displaystyle\leq\widehat{C}(s,\varsigma)||f-f(\cdot+\eta)||_{H^{s}(\mathbb{R}^{d})}\quad (\ref{budengshi} A)\\
&\displaystyle\leq\widehat{C}(s,\varsigma)2^{1-\zeta}\|f\|_{H^{\varsigma}(\mathbb{R}^{d})}\|\eta\|_{2}^{\zeta},
\quad (\ref{budengshi} B)\end{array}\end{align}
(\ref{budengshi}$A$) is from  \eqref{ghgh}, and (\ref{budengshi}$B$) is from \eqref{dddf}.
This completes the proof.
\end{proof}

\section{A scheme for single-shot phase retrieval   and admissible  reference  wave}

\subsection{A two-step scheme for single-shot PR}\label{twosteps}
Based on the wavelet $N$-level approximation in  Proposition \ref{caiyangapproximation}, this subsection is to sketch our  scheme for the single-shot PR of  functions in  $ H^{s}(\mathbb{R}^d)$, $s>d/2$.
Before proceeding further, we need some necessary  denotations.

 From now on  we suppose that the refinable function $\phi\in H^{s}(\mathbb{R}^d)$
 is compactly supported,  where $s>d/2$. Particularly, without loss of generality  suppose that
 \begin{align}\label{zhichengjiashe}\hbox{supp}(\phi)\subseteq [0, M_{1}]\times \cdots\times [0, M_{d}].\end{align}
 As previously,   the target   $f$ is supposed to sit  in $ H^{s}(\mathbb{R}^d)$ and    the  region of  interest (ROI) on
  $\mathbb{R}^d$ is denoted by  $\Omega$. Correspondingly,   denote 
 \begin{align}\label{zuidaqujianbianyuan}
 M:=\max\{M_{1}, \ldots, M_{d}\}
 \end{align}
 and
\begin{align}\label{yixiejihao}\|\Omega\|_{2,\sup}:=\sup\{\|\emph{\textbf{x}}\|_{2}: \emph{\textbf{x}}\in \Omega\}, \mathbb{Z}_{\Omega}^{d}:=\Omega\cap \mathbb{Z}^{d}.\end{align}
Throughout this paper, it is assumed that $\Omega$ is bounded and $\mathbb{Z}_{\Omega}^{d}$ is   not empty.
Under this   assumption, clearly $\mathbb{Z}_{\Omega}^{d}$ is finite.
For any  $l\in \{1, \ldots, d\}$, denote
\begin{align}\label{zuixiao}L_{l,\min}:=\min\{k_{l}: (k_{1}, \ldots, k_{d})\in\mathbb{Z}_{\Omega}^{d}\}, L_{l,\max}:=\max\{k_{l}: (k_{1}, \ldots, k_{d})\in\mathbb{Z}_{\Omega}^{d}\}.\end{align}

Based on Proposition \ref{caiyangapproximation},  it is  ready to address  how to establish the scheme for the  single-shot PR.
By Proposition \ref{caiyangapproximation} \eqref{cxzbc},
$f$ can be  approximated by using  the $N$-level data $\{f(2^{-N}\emph{\textbf{k}}): \emph{\textbf{k}}\in \mathbb{Z}^{d}\}$  such that
\begin{align}\label{series}
f(\emph{\textbf{x}})\approx\sum_{\emph{\textbf{k}}\in \mathbb{Z}^d}f(2^{-N}\emph{\textbf{k}})\phi(2^N\emph{\textbf{x}}-\emph{\textbf{k}})
\end{align}
for sufficiently large level  $N.$
If restricting   $\emph{\textbf{x}}\in \Omega$, then it follows from \eqref{zhichengjiashe} and \eqref{zuixiao} that   the   series
in \eqref{series} reduce to
finitely many  sums such that
\begin{align}\label{roi}
f(\emph{\textbf{x}})\approx\sum_{\emph{\textbf{k}}\in \Lambda_{\Omega,N}} f(2^{-N}\emph{\textbf{k}})\phi(2^N\emph{\textbf{x}}-\emph{\textbf{k}}),
\end{align}
where
%
 \begin{align}\label{fft1}\begin{array}{lll}
 \Lambda_{\Omega,N}
 :=\{\emph{\textbf{k}}=(k_{1},\ldots,  k_{d})\in \mathbb{Z}^{d}: 2^{N}L_{l,\min}-M_{l}\leq k_{l}\leq2^{N}L_{l,\max}, l=1, \ldots, d\}.
\end{array}\end{align}
That is, we can  use the  $N$-level formula   \eqref{roi} to approximate $f$ at any  $\emph{\textbf{x}}\in \Omega$ provided that the finitely many $N$-level  data $\{f(2^{-N}\emph{\textbf{k}}): \emph{\textbf{k}}\in  \Lambda_{\Omega,N}\}$
are available.
Motivated by this, our  scheme for the  single-shot   PR of $f$ is sketched  as the following two steps:
\begin{tcolorbox}
\begin{sche}\label{jiusuanfa}
\textbf{step (1)}:
Based on  finitely many
single-shot  interference intensities
$\{|f(\widehat{\emph{\textbf{t}}})+g_{N}(\widehat{\emph{\textbf{t}}})|: \widehat{\emph{\textbf{t}}}\in \widehat{\Lambda}_{\Omega,N}\}$
we construct the approximation to   the $N$-level data
$\{f(2^{-N}\emph{\textbf{k}}): \emph{\textbf{k}}\in  \Lambda_{\Omega,N}\}.$ \%\% Here $\widehat{\Lambda}_{\Omega,N}$ is a finite sampling  set that will be soon    discussed in subsection \ref{samplingset}  and  $g_{N}$ is the reference wave we use for the recovery of the above  $N$-level data.

\textbf{step (2)}:  Based on the above  approximation to the  $N$-level data $\{f(2^{-N}\emph{\textbf{k}}): \emph{\textbf{k}}\in  \Lambda_{\Omega,N}\}$,
we construct the approximation to $f$ on $\Omega$. \\
 \end{sche}
\end{tcolorbox}

\subsection{Concretization  of  Scheme  \ref{jiusuanfa}} \label{samplingset}
\subsubsection{\textbf{An updated scheme: concretization of Scheme \ref{jiusuanfa} from the perspective of quasi-interference intensity}  }
As previously, the ROI and  the reference wave we use at level $N$ are   denoted by $\Omega$ and $g_{N}$, respectively. Suppose that   the interference intensities at level $N$:
 \begin{align}\label{intensity123} I_{N, \emph{\textbf{k}}}=|f(2^{-N}\emph{\textbf{k}})+g_{N}(2^{-N}\emph{\textbf{k}})|^{2},  \emph{\textbf{k}}\in  \Lambda_{\Omega,N}\end{align}
 are observed, where $\Lambda_{\Omega,N}$ is as in \eqref{fft1}.
Since the phase of $f(2^{-N}\emph{\textbf{k}})+g_{N}(2^{-N}\emph{\textbf{k}})$ is missing,
only  $I_{N, \emph{\textbf{k}}}$ is not sufficient for recovering the $N$-level data
$f(2^{-N}\emph{\textbf{k}})$.
Then  additional intensities are necessary. For this, we require  two additional  intensities
\begin{align}\label{1BXCZSDF}I_{N, \emph{\textbf{k}}^{'}}=|f(2^{-N}\emph{\textbf{k}}^{'})+g_{N}(2^{-N}\emph{\textbf{k}}^{'})|^{2}, I_{N, \emph{\textbf{k}}^{''}}=|f(2^{-N}\emph{\textbf{k}}^{''})+g_{N}(2^{-N}\emph{\textbf{k}}^{''})|^{2},\end{align}
 where $\emph{\textbf{k}}^{'}\neq\emph{\textbf{k}}^{''}$ such that $2^{-N}\emph{\textbf{k}}^{'}\approx2^{-N}\emph{\textbf{k}}$ and $ 2^{-N}\emph{\textbf{k}}^{''}\approx2^{-N}\emph{\textbf{k}}$.
 Now  the  set of  sampling points for Scheme  \ref{jiusuanfa} is chosen as
 $$\widehat{\Lambda}_{\Omega,N}=\{2^{-N}\emph{\textbf{k}}, 2^{-N}\emph{\textbf{k}}', 2^{-N}\emph{\textbf{k}}'': \emph{\textbf{k}}\in  \Lambda_{\Omega,N}\}.$$
 For simplicity, denote a set in $\mathbb{R}^{3d}$ associated with $\widehat{\Lambda}_{\Omega,N}$ as follows,
\begin{align}\label{lingwaijihe}
\Xi_{\Omega,N}:=\{(\emph{\textbf{k}}, \emph{\textbf{k}}', \emph{\textbf{k}}''): \emph{\textbf{k}}\in  \Lambda_{\Omega,N},
2^{-N}\emph{\textbf{k}}^{'}\approx2^{-N}\emph{\textbf{k}}, 2^{-N}\emph{\textbf{k}}^{''}\approx2^{-N}\emph{\textbf{k}}\}.
\end{align}
Correspondingly, the   required  single-shot interference
 intensity set    is
\begin{align}\label{zongnengliang}
I_{\Xi_{\Omega,N}}:=\{I_{N, \emph{\textbf{k}}}, I_{N, \emph{\textbf{k}}^{'}}, I_{N, \emph{\textbf{k}}^{''}}: (\emph{\textbf{k}}, \emph{\textbf{k}}', \emph{\textbf{k}}'')\in \Xi_{\Omega,N}\}.
\end{align}

Based on $I_{\Xi_{\Omega,N}}$, in what follows we address how to formulate step (1) of Scheme  \ref{jiusuanfa}.
Since $2^{-N}\emph{\textbf{k}}^{'}\approx2^{-N}\emph{\textbf{k}}$ and $ 2^{-N}\emph{\textbf{k}}^{''}\approx2^{-N}\emph{\textbf{k}}$ then
\begin{align}\label{intensity1234}\left\{\begin{array}{lllll}
I_{N, \emph{\textbf{k}}^{'}}=|f(2^{-N}\emph{\textbf{k}}^{'})+g_{N}(2^{-N}\emph{\textbf{k}}^{'})|^{2}&\approx
|f(2^{-N}\emph{\textbf{k}})+g_{N}(2^{-N}\emph{\textbf{k}}^{'})|^{2}\\
&:=A_{N,\emph{\textbf{k}}, \emph{\textbf{k}}'}, \quad (\ref{intensity1234} \hbox{A})\\
I_{N, \emph{\textbf{k}}^{''}}=|f(2^{-N}\emph{\textbf{k}}^{''})+g_{N}(2^{-N}\emph{\textbf{k}}^{''})|^{2}&\approx
|f(2^{-N}\emph{\textbf{k}})+g_{N}(2^{-N}\emph{\textbf{k}}^{''})|^{2}\\
&:=A_{N,\emph{\textbf{k}}, \emph{\textbf{k}}''}. \quad (\ref{intensity1234} \hbox{B})
\end{array}
\right.
\end{align}
From now on $A_{N,\emph{\textbf{k}}, \emph{\textbf{k}}'}$
and $A_{N,\emph{\textbf{k}}, \emph{\textbf{k}}''}$
are referred to as the \textbf{quasi-interference} intensities.
It follows from  \eqref{intensity123},  (\ref{intensity1234}A) and (\ref{intensity1234}B)
that $f(2^{-N}\emph{\textbf{k}})$ is a solution to the  equation system w.r.t $x$:
 \begin{align}\label{intensity8765}\left\{\begin{array}{lllll}
 |x+g_{N}(2^{-N}\emph{\textbf{k}})|^{2}=I_{N, \emph{\textbf{k}}},\\
 |x+g_{N}(2^{-N}\emph{\textbf{k}}^{'})|^{2}=A_{N, \emph{\textbf{k}}, \emph{\textbf{k}}^{'}},\\
 |x+g_{N}(2^{-N}\emph{\textbf{k}}^{''})|^{2}=A_{N, \emph{\textbf{k}}, \emph{\textbf{k}}^{''}}.
\end{array}
\right.
\end{align}
Note that the quasi-interference intensities
$A_{N, \emph{\textbf{k}}, \emph{\textbf{k}}^{'}}$ and $A_{N, \emph{\textbf{k}}, \emph{\textbf{k}}^{''}}$
are not necessarily  the interference intensities  of $f$. As such,  we modify  \eqref{intensity8765} as follows,
\begin{align}\label{intensity18765}\left\{\begin{array}{lllll}
 |x+g_{N}(2^{-N}\emph{\textbf{k}})|^{2}=I_{N, \emph{\textbf{k}}},\\
 |x+g_{N}(2^{-N}\emph{\textbf{k}}^{'})|^{2}=I_{N, \emph{\textbf{k}}^{'}},\\
 |x+g_{N}(2^{-N}\emph{\textbf{k}}^{''})|^{2}=I_{N, \emph{\textbf{k}}^{''}}.
\end{array}
\right.
\end{align}

For  the  perturbations in \eqref{intensity1234}: $A_{N,\emph{\textbf{k}},\emph{\textbf{k}}'}\approx I_{N, \emph{\textbf{k}}'}
$ and $A_{N,\emph{\textbf{k}},\emph{\textbf{k}}''}\approx I_{N, \emph{\textbf{k}}''}
$, equation system  \eqref{intensity18765} can be considered as the perturbation
of \eqref{intensity8765}. From  such a perturbation perspective, based on \eqref{intensity18765} we will construct an approximation
to $f(2^{-N}\emph{\textbf{k}})$ that is the solution to \eqref{intensity8765}. Correspondingly,
Scheme \ref{jiusuanfa} is  concretized   as follows:\\

\begin{tcolorbox}
\begin{sche}\label{xinsuanfa}
\textbf{step $(1')$}:
Based on the intensity set  in \eqref{zongnengliang}:
\begin{align}\label{yongnaxienengliang}I_{\Xi_{\Omega,N}}=\{I_{N, \emph{\textbf{k}}}, I_{N, \emph{\textbf{k}}^{'}}, I_{N, \emph{\textbf{k}}^{''}}
: (\emph{\textbf{k}}, \emph{\textbf{k}}', \emph{\textbf{k}}'')\in \Xi_{\Omega,N}\},\end{align}
 for any $ \emph{\textbf{k}}\in  \Lambda_{\Omega,N}$ we will construct an approximation
  $\mathring{f}(2^{-N}\emph{\textbf{k}})$ to the $N$-level  data $f(2^{-N}\emph{\textbf{k}})$ through \eqref{intensity18765}.\\
\textbf{step $(2')$}: Based on  $\{\mathring{f}(2^{-N}\emph{\textbf{k}}): \emph{\textbf{k}}\in  \Lambda_{\Omega,N}\}$,
 the approximation to $f$ on $\Omega$ is defined as
\begin{align}\label{formulationstep2}f\approx\sum_{\emph{\textbf{k}}\in  \Lambda_{\Omega,N}}\mathring{f}(2^{-N}\emph{\textbf{k}})\phi(2^{N}\cdot-\emph{\textbf{k}}),\end{align}
where the refinable function $\phi$ is as in \eqref{roi}. Establish the approximation error on $\Omega$:
\begin{align}\label{wuchaguji}
\|\big(f-\sum_{\emph{\textbf{k}}\in  \Lambda_{\Omega,N}}\mathring{f}(2^{-N}\emph{\textbf{k}})\phi(2^{N}\cdot-\emph{\textbf{k}})\big)|_{\Omega}\|_{L^2(\mathbb{R}^d)}.
\end{align}
\end{sche}
\end{tcolorbox}

\subsubsection{\textbf{Analysis of the recovery error in} \textbf{step $(1')$} \textbf{of Scheme} \ref{xinsuanfa}:
\textbf{a motivation for admissible reference wave}}\label{erroranalysis}
Note that $\|\big(f-\sum_{\emph{\textbf{k}}\in  \Lambda_{\Omega,N}}f(2^{-N}\emph{\textbf{k}})\phi(2^{N}\cdot-\emph{\textbf{k}})\big)|_{\Omega}\|_{L^2(\mathbb{R}^d)}$
can be estimated by \eqref{cxzbc}, and $\mathring{f}(2^{-N}\emph{\textbf{k}})$  in  \textbf{step $(1')$}  is an approximation to $f(2^{-N}\emph{\textbf{k}}).$
Therefore, the error estimate for \eqref{wuchaguji} is closely related to  $|f(2^{-N}\emph{\textbf{k}})-
\mathring{f}(2^{-N}\emph{\textbf{k}})|$ and
will be addressed  in Note \ref{wuchafenxil}.
Now the key task  is to   construct   the approximation $\mathring{f}(2^{-N}\emph{\textbf{k}})$
to $f(2^{-N}\emph{\textbf{k}})$.
Recall that
$\mathring{f}(2^{-N}\emph{\textbf{k}})$ is from \eqref{intensity18765} which is the perturbation of
\eqref{intensity8765}. It is naturally required that \eqref{intensity8765} has a unique solution.
The following gives a sufficient and necessary condition for such a uniqueness requirement.

\begin{theo}\label{prop123} As in \eqref{intensity123}, the interference wave we use  at level $N$ is denoted by  $g_{N}$.
Additionally, the set
$\Xi_{\Omega,N}=\{(\emph{\textbf{k}}, \emph{\textbf{k}}', \emph{\textbf{k}}''): \emph{\textbf{k}}\in  \Lambda_{\Omega,N},
2^{-N}\emph{\textbf{k}}^{'}\approx2^{-N}\emph{\textbf{k}}, 2^{-N}\emph{\textbf{k}}^{''}\approx2^{-N}\emph{\textbf{k}}\}$ is as in
\eqref{lingwaijihe}.
Then $f(2^{-N}\emph{\textbf{k}})=\Re(f(2^{-N}\emph{\textbf{k}}))+\textbf{i}\Im(f(2^{-N}\emph{\textbf{k}}))$ with $\emph{\textbf{k}}\in  \Lambda_{\Omega,N}$ is the unique solution to   \eqref{intensity8765}
if and only if
\begin{align}\label{cba1}\mu_{N;\emph{\textbf{k}}, \emph{\textbf{k}}', \emph{\textbf{k}}''}:=-\Im[(g_{N}(2^{-N}\emph{\textbf{k}})-g_{N}(2^{-N}\emph{\textbf{k}}^{'}))(\overline{g_{N}}(2^{-N}\emph{\textbf{k}})-\overline{g_{N}}(2^{-N}\emph{\textbf{k}}^{''}))]\neq0.\end{align}
Particularly, if $\mu_{N;\emph{\textbf{k}}, \emph{\textbf{k}}', \emph{\textbf{k}}''}\neq0$ then  $f(2^{-N}\emph{\textbf{k}})
$ can be determined uniquely  by
\begin{align}\label{BVC123} \begin{array}{lllll}\left[\begin{array}{cccccccccc}\Re(f(2^{-N}\emph{\textbf{k}}))\\
\Im(f(2^{-N}\emph{\textbf{k}}))
\end{array}\right]\\
=\displaystyle \frac{1}{2\mu_{N;\emph{\textbf{k}}, \emph{\textbf{k}}', \emph{\textbf{k}}''}}\left[\begin{array}{cccccccccc}
 e &-d \\-h &c\end{array}\right]\left[\begin{array}{cccccccccc}I_{N,\emph{\textbf{k}}}-A_{N,\emph{\textbf{k}}, \emph{\textbf{k}}'}+|g_{N}(2^{-N}\emph{\textbf{k}}^{'})|^{2}-|g_{N}(2^{-N}\emph{\textbf{k}})|^{2}\\
I_{N,\emph{\textbf{k}}}-A_{N,\emph{\textbf{k}}, \emph{\textbf{k}}''}+|g_{N}(2^{-N}\emph{\textbf{k}}^{''})|^{2}-|g_{N}(2^{-N}\emph{\textbf{k}})|^{2}\end{array}\right],
\end{array}
\end{align}
where $I_{N,\emph{\textbf{k}}}$, $A_{N; \emph{\textbf{k}}, \emph{\textbf{k}}'}$
and $A_{N; \emph{\textbf{k}}, \emph{\textbf{k}}''}$ are as in \eqref{intensity8765}, and
\begin{align}\label{somecoefficients}\left\{\begin{array}{lllll}
c:=\Re(g_{N}(2^{-N}\emph{\textbf{k}})-g_{N}(2^{-N}\emph{\textbf{k}}^{'})),&d:=\Im(g_{N}(2^{-N}\emph{\textbf{k}})-g_{N}(2^{-N}\emph{\textbf{k}}^{'})),\\
 h:=\Re(g_{N}(2^{-N}\emph{\textbf{k}})-g_{N}(2^{-N}\emph{\textbf{k}}^{''})),&e:=\Im(g_{N}(2^{-N}\emph{\textbf{k}})-g_{N}(2^{-N}\emph{\textbf{k}}^{''})).
\end{array}
\right.
\end{align}
\end{theo}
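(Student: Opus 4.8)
The plan is to linearize. The system \eqref{intensity8765} is nonlinear only through the term $|x|^{2}$ that is common to all three equations, so subtracting the first equation from the second and from the third eliminates $|x|^{2}$ and leaves a $2\times 2$ real linear system for the unknowns $\Re(x)$ and $\Im(x)$, whose coefficient determinant will turn out to be exactly $\mu:=\mu_{N;\emph{\textbf{k}},\emph{\textbf{k}}',\emph{\textbf{k}}''}$ of \eqref{cba1}. Concretely, writing $g_{0}:=g_{N}(2^{-N}\emph{\textbf{k}})$, $g_{1}:=g_{N}(2^{-N}\emph{\textbf{k}}')$, $g_{2}:=g_{N}(2^{-N}\emph{\textbf{k}}'')$ and expanding $|x+g_{j}|^{2}=|x|^{2}+2\big(\Re(x)\Re(g_{j})+\Im(x)\Im(g_{j})\big)+|g_{j}|^{2}$, the (first minus second) and (first minus third) equations read, with $c,d,h,e$ as in \eqref{somecoefficients},
\[
\left[\begin{array}{cc} c & d\\ h & e\end{array}\right]\left[\begin{array}{c}\Re(x)\\ \Im(x)\end{array}\right]
=\frac{1}{2}\left[\begin{array}{c} I_{N,\emph{\textbf{k}}}-A_{N,\emph{\textbf{k}},\emph{\textbf{k}}'}+|g_{1}|^{2}-|g_{0}|^{2}\\ I_{N,\emph{\textbf{k}}}-A_{N,\emph{\textbf{k}},\emph{\textbf{k}}''}+|g_{2}|^{2}-|g_{0}|^{2}\end{array}\right].
\]
Expanding $(g_{0}-g_{1})(\overline{g_{0}}-\overline{g_{2}})$ and taking $-\Im$ of it gives $\mu=ce-dh$, so $\mu$ is literally the determinant of the coefficient matrix above; I would record this identity first.

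With this, the forward implication is short: by construction (see \eqref{intensity123} and (\ref{intensity1234}A)--(\ref{intensity1234}B)) the number $f(2^{-N}\emph{\textbf{k}})$ solves \eqref{intensity8765}, hence solves the displayed linear system, and conversely \emph{every} solution of \eqref{intensity8765} solves that linear system. When $\mu\neq0$ the linear system has a unique solution, obtained by inverting the $2\times 2$ coefficient matrix, which is exactly formula \eqref{BVC123}; therefore \eqref{intensity8765} has at most one solution, so $f(2^{-N}\emph{\textbf{k}})$ is its unique solution and equals \eqref{BVC123}. For the converse I would argue by contraposition: if $\mu=0$ then $g_{0}-g_{1}$ and $g_{0}-g_{2}$ are real scalar multiples of one another, so the three circles $|x+g_{j}|^{2}=r_{j}^{2}$ ($j=0,1,2$, $r_{j}^{2}$ denoting the $j$-th right-hand side of \eqref{intensity8765}) have collinear centres $-g_{0},-g_{1},-g_{2}$, say on a line $L$. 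Each such circle is symmetric under reflection across $L$ (its centre lies on $L$), and $f(2^{-N}\emph{\textbf{k}})$ lies on all three; hence its mirror image $f^{\ast}$ across $L$ also lies on all three and is a second solution of \eqref{intensity8765}, distinct from $f(2^{-N}\emph{\textbf{k}})$ as soon as $f(2^{-N}\emph{\textbf{k}})\notin L$.

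The one step that is more than bookkeeping, and which I expect to be the main obstacle, is the degenerate subcase of the converse where $f(2^{-N}\emph{\textbf{k}})$ happens to lie on the line $L$ through the (then collinear) centres: there $f^{\ast}=f(2^{-N}\emph{\textbf{k}})$, the line cut out by the subtraction becomes tangent to the circle $|x+g_{0}|^{2}=r_{0}^{2}$ at $f(2^{-N}\emph{\textbf{k}})$, and \eqref{intensity8765} can genuinely have $f(2^{-N}\emph{\textbf{k}})$ as its only solution. The plan is to rule this out using the reference waves actually employed: the uniform admissibility imposed on $\{g_{N}\}$ and on the offsets $2^{-N}\emph{\textbf{k}}',2^{-N}\emph{\textbf{k}}''$ in subsections \ref{yixiedingyi} and \ref{JKKKK12345} is designed precisely so that $\mu_{N;\emph{\textbf{k}},\emph{\textbf{k}}',\emph{\textbf{k}}''}\neq 0$ always holds there, so the contrapositive is only invoked to warn that, outside the admissible regime, three intensities need not determine $f(2^{-N}\emph{\textbf{k}})$. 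Everything else reduces to the linear algebra of one $2\times 2$ system together with the identity $\mu=ce-dh$.
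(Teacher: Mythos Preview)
Your approach is the same as the paper's: expand $|x+g_j|^2$, subtract to kill the common $|x|^2$, obtain the $2\times 2$ real linear system with coefficient matrix $\begin{pmatrix}c&d\\ h&e\end{pmatrix}$, and identify $\mu_{N;\emph{\textbf{k}},\emph{\textbf{k}}',\emph{\textbf{k}}''}=ce-dh$ as its determinant. The paper records exactly this identity (their \eqref{fcxvbnm}) and then simply asserts that $(\Re f(2^{-N}\emph{\textbf{k}}),\Im f(2^{-N}\emph{\textbf{k}}))^{T}$ is uniquely determined by the linear system \eqref{BVC} iff $ce-dh\neq0$, from which the formula \eqref{BVC123} follows by inverting the matrix.

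You are in fact more careful than the paper on the converse. The paper equates ``unique solution to \eqref{intensity8765}'' with ``unique solution to the reduced linear system \eqref{BVC}'' without comment; you correctly note that when $\mu=0$ the solution set of the linear system is a line through $f(2^{-N}\emph{\textbf{k}})$, and that the reflection argument across $L$ produces a second solution of \eqref{intensity8765} \emph{except} in the tangent case where $f(2^{-N}\emph{\textbf{k}})\in L$. That degenerate case is real and is not handled by the paper either: taken literally, the ``only if'' direction of the theorem can fail for particular $f$. Your proposed resolution---that in the paper's applications admissibility (Definition~\ref{addefinition}, Proposition~\ref{rongxuxing}) forces $\mu\neq0$ anyway, so the converse is never actually invoked---is exactly how the paper uses the result downstream. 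If you want to match the paper, you can stop at the determinant identity and the inversion formula; if you want a rigorous freestanding iff, you would need an extra hypothesis (e.g.\ $f(2^{-N}\emph{\textbf{k}})\notin L$, or that the statement holds for generic $f$).
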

\begin{proof}
Denote $x=\Re(x)+\textbf{i}\Im(x)$. Then it is directly from \eqref{intensity8765} that
\begin{align}\label{positivemood}\left\{\begin{array}{lllll}
 2(\Re(x)\Re(g_{N}(2^{-N}\emph{\textbf{k}}))+\Im(x)\Im(g_{N}(2^{-N}\emph{\textbf{k}})))&=-|x|^2+I_{N,\emph{\textbf{k}}}-|g_{N}(2^{-N}\emph{\textbf{k}})|^{2},\\
 2(\Re(x)\Re(g_{N}(2^{-N}\emph{\textbf{k}}^{'}))+\Im(x)\Im(g_{N}(2^{-N}\emph{\textbf{k}}^{'})))&=-|x|^2+A_{N,\emph{\textbf{k}}, \emph{\textbf{k}}^{'}}-|g_{N}(2^{-N}\emph{\textbf{k}}^{'})|^{2},\\
 2(\Re(x)\Re(g_{N}(2^{-N}\emph{\textbf{k}}^{''}))+\Im(x)\Im(g_{N}(2^{-N}\emph{\textbf{k}}^{''})))&=-|x|^2+A_{N,\emph{\textbf{k}}, \emph{\textbf{k}}^{''}}-|g_{N}(2^{-N}\emph{\textbf{k}}^{''})|^{2}.
\end{array}
\right.
\end{align}
From this, we have
\begin{align}\label{BVC}2\left[\begin{array}{cccccccccc}
 c &d \\h &e\end{array}\right] \left[\begin{array}{cccccccccc}\Re(x)\\
\Im(x)
\end{array}\right]= \left[\begin{array}{cccccccccc}I_{N,\emph{\textbf{k}}}-A_{N,\emph{\textbf{k}}, \emph{\textbf{k}}'}+|g_{N}(2^{-N}\emph{\textbf{k}}^{'})|^{2}-|g_{N}(2^{-N}\emph{\textbf{k}})|^{2}\\
I_{N,\emph{\textbf{k}}}-A_{N,\emph{\textbf{k}}, \emph{\textbf{k}}''}+|g_{N}(2^{-N}\emph{\textbf{k}}^{''})|^{2}-|g_{N}(2^{-N}\emph{\textbf{k}})|^{2}\end{array}\right].
\end{align}
Note that  $f(2^{-N}\emph{\textbf{k}})$ is a solution to \eqref{intensity8765}.
Equivalently,
$(\Re(f(2^{-N}\emph{\textbf{k}})), $
$\Im(f(2^{-N}\emph{\textbf{k}})))^{T}$ is a solution to \eqref{BVC}.
Note that  $(\Re(f(2^{-N}\emph{\textbf{k}})), $
$\Im(f(2^{-N}\emph{\textbf{k}})))^{T}$ can be determined uniquely  by  \eqref{BVC}  if  and only if
\begin{align}\label{hanglieshi}\det\Big(\left[\begin{array}{cccccccccc}
 c &d \\h &e\end{array}\right]\Big)=ce-dh\neq0.\end{align}
 Through the direct calculation we have
 \begin{align}\label{fcxvbnm}\mu_{N;\emph{\textbf{k}}, \emph{\textbf{k}}', \emph{\textbf{k}}''}=
 ce-dh.\end{align}
Then $(\Re(f(2^{-N}\emph{\textbf{k}})),$
$\Im(f(2^{-N}\emph{\textbf{k}})))^{T}$ can be determined uniquely   if and only if  $\mu_{N;\emph{\textbf{k}}, \emph{\textbf{k}}', \emph{\textbf{k}}''}\neq0$.  The first part of the theorem is proved.  Now \eqref{BVC123} is derived from \eqref{BVC}
and \eqref{fcxvbnm}.
\end{proof}

Recall again that      $I_{N,\emph{\textbf{k}}'}\approx A_{N,\emph{\textbf{k}}, \emph{\textbf{k}}'}
$ and $I_{N,\emph{\textbf{k}}''}\approx A_{N,\emph{\textbf{k}}, \emph{\textbf{k}}''}
$.
 Motivated by  Theorem  \ref{prop123} \eqref{BVC123}, in what follows we construct  the  approximation $\mathring{f}(2^{-N}\emph{\textbf{k}})$
to $f(2^{-N}\emph{\textbf{k}})$.
It is the central task  for  \textbf{step $(1')$} in Scheme \ref{xinsuanfa}.

\begin{theo}\label{chubuguji}
As in \eqref{intensity123}, suppose that $g_{N}$ is the reference wave we use at level $N$. The set
$\Xi_{\Omega,N}=\{(\emph{\textbf{k}}, \emph{\textbf{k}}', \emph{\textbf{k}}''): \emph{\textbf{k}}\in  \Lambda_{\Omega,N},
2^{-N}\emph{\textbf{k}}^{'}\approx2^{-N}\emph{\textbf{k}}, 2^{-N}\emph{\textbf{k}}^{''}\approx2^{-N}\emph{\textbf{k}}\}$ is as in
\eqref{lingwaijihe},
and  $\mu_{N;\emph{\textbf{k}}, \emph{\textbf{k}}', \emph{\textbf{k}}''}$
is  as in \eqref{cba1}. For any $(\emph{\textbf{k}}, \emph{\textbf{k}}', \emph{\textbf{k}}'')\in \Xi_{\Omega, N}$
such that $\mu_{N;\emph{\textbf{k}}, \emph{\textbf{k}}', \emph{\textbf{k}}''}\neq0$,
define a complex number  $\mathring{f}(2^{-N}\emph{\textbf{k}}):=\mathring{\Re}(f(2^{-N}\emph{\textbf{k}}))$
$+\textbf{i}\mathring{\Im}(f(2^{-N}\emph{\textbf{k}}))$
by
\begin{align}\label{BVC1234} \begin{array}{lllll}\left[\begin{array}{cccccccccc}\mathring{\Re}(f(2^{-N}\emph{\textbf{k}}))\\
\mathring{\Im}(f(2^{-N}\emph{\textbf{k}}))
\end{array}\right]
=\displaystyle \frac{1}{2\mu_{N;\emph{\textbf{k}}, \emph{\textbf{k}}', \emph{\textbf{k}}''}}\left[\begin{array}{cccccccccc}
 e &-d \\-h &c\end{array}\right]\left[\begin{array}{cccccccccc}I_{N,\emph{\textbf{k}}}-I_{N, \emph{\textbf{k}}'}+|g_{N}(2^{-N}\emph{\textbf{k}}^{'})|^{2}-|g_{N}(2^{-N}\emph{\textbf{k}})|^{2}\\
I_{N,\emph{\textbf{k}}}-I_{N, \emph{\textbf{k}}''}+|g_{N}(2^{-N}\emph{\textbf{k}}^{''})|^{2}-|g_{N}(2^{-N}\emph{\textbf{k}})|^{2}\end{array}\right],
\end{array}
\end{align}
where $c, d, e$ and $h$ are as in \eqref{somecoefficients}.
Then
\begin{align}\label{cucaoguji}\begin{array}{cccccccccc}
\Big\|\left[\begin{array}{cccccccccc}\Re(f(2^{-N}\emph{\textbf{k}}))\\
\Im(f(2^{-N}\emph{\textbf{k}}))
\end{array}\right]-\left[\begin{array}{cccccccccc}\mathring{\Re}(f(2^{-N}\emph{\textbf{k}}))\\
\mathring{\Im}(f(2^{-N}\emph{\textbf{k}}))
\end{array}\right]\Big\|_{2}\\
\leq\frac{\max\{|g_{N}(2^{-N}\emph{\textbf{k}})-g_{N}(2^{-N}\emph{\textbf{k}}')|, |\overline{g_{N}}(2^{-N}\emph{\textbf{k}})-\overline{g_{N}}(2^{-N}\emph{\textbf{k}}'')|\}}{|\Im[(g_{N}(2^{-N}\emph{\textbf{k}})-g_{N}(2^{-N}\emph{\textbf{k}}^{'}))(\overline{g_{N}}(2^{-N}\emph{\textbf{k}})-\overline{g_{N}}(2^{-N}\emph{\textbf{k}}^{''}))]|}
\sqrt{(I_{N,\emph{\textbf{k}}'}-A_{N,\emph{\textbf{k}},\emph{\textbf{k}}'})^{2}+(I_{N,\emph{\textbf{k}}''}-A_{N,\emph{\textbf{k}},\emph{\textbf{k}}''})^{2}}.
\end{array}
\end{align}
\end{theo}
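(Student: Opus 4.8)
The plan is to read \eqref{cucaoguji} as a sensitivity estimate for the two explicit linear formulas. Both $(\Re(f(2^{-N}\mathbf{k})),\Im(f(2^{-N}\mathbf{k})))^{T}$, by Theorem \ref{prop123} and \eqref{BVC123}, and $(\mathring{\Re}(f(2^{-N}\mathbf{k})),\mathring{\Im}(f(2^{-N}\mathbf{k})))^{T}$, by definition \eqref{BVC1234}, are the image of a right-hand side vector under the \emph{same} matrix $\frac{1}{2\mu_{N;\mathbf{k},\mathbf{k}',\mathbf{k}''}}\left[\begin{smallmatrix}e&-d\\-h&c\end{smallmatrix}\right]$; the two right-hand sides differ only in that \eqref{BVC123} carries the quasi-interference intensities $A_{N,\mathbf{k},\mathbf{k}'},A_{N,\mathbf{k},\mathbf{k}''}$ while \eqref{BVC1234} carries the observed intensities $I_{N,\mathbf{k}'},I_{N,\mathbf{k}''}$. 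So I would first subtract \eqref{BVC1234} from \eqref{BVC123}; the common $I_{N,\mathbf{k}}$ term and the common terms $|g_{N}(2^{-N}\mathbf{k}')|^{2}-|g_{N}(2^{-N}\mathbf{k})|^{2}$ and $|g_{N}(2^{-N}\mathbf{k}'')|^{2}-|g_{N}(2^{-N}\mathbf{k})|^{2}$ cancel, leaving
\[
\begin{bmatrix}\Re(f(2^{-N}\mathbf{k}))-\mathring{\Re}(f(2^{-N}\mathbf{k}))\\ \Im(f(2^{-N}\mathbf{k}))-\mathring{\Im}(f(2^{-N}\mathbf{k}))\end{bmatrix}=\frac{1}{2\mu_{N;\mathbf{k},\mathbf{k}',\mathbf{k}''}}\begin{bmatrix}e&-d\\-h&c\end{bmatrix}\begin{bmatrix}I_{N,\mathbf{k}'}-A_{N,\mathbf{k},\mathbf{k}'}\\ I_{N,\mathbf{k}''}-A_{N,\mathbf{k},\mathbf{k}''}\end{bmatrix}.
\]

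Next I would take Euclidean norms. Writing the matrix--vector product as the column combination $(I_{N,\mathbf{k}'}-A_{N,\mathbf{k},\mathbf{k}'})(e,-h)^{T}+(I_{N,\mathbf{k}''}-A_{N,\mathbf{k},\mathbf{k}''})(-d,c)^{T}$ and recalling from \eqref{somecoefficients} that $\sqrt{e^{2}+h^{2}}=|g_{N}(2^{-N}\mathbf{k})-g_{N}(2^{-N}\mathbf{k}'')|=|\overline{g_{N}}(2^{-N}\mathbf{k})-\overline{g_{N}}(2^{-N}\mathbf{k}'')|$ and $\sqrt{c^{2}+d^{2}}=|g_{N}(2^{-N}\mathbf{k})-g_{N}(2^{-N}\mathbf{k}')|$ (conjugation preserves modulus), I extract the common factor $G:=\max\{|g_{N}(2^{-N}\mathbf{k})-g_{N}(2^{-N}\mathbf{k}')|,\,|\overline{g_{N}}(2^{-N}\mathbf{k})-\overline{g_{N}}(2^{-N}\mathbf{k}'')|\}$ and apply the triangle inequality followed by Cauchy--Schwarz ($|a|+|b|\le\sqrt{2}\sqrt{a^{2}+b^{2}}$) to get that the norm of the column combination is at most $\sqrt{2}\,G\,\sqrt{(I_{N,\mathbf{k}'}-A_{N,\mathbf{k},\mathbf{k}'})^{2}+(I_{N,\mathbf{k}''}-A_{N,\mathbf{k},\mathbf{k}''})^{2}}$. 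Finally, by \eqref{cba1} the denominator satisfies $|\mu_{N;\mathbf{k},\mathbf{k}',\mathbf{k}''}|=|\Im[(g_{N}(2^{-N}\mathbf{k})-g_{N}(2^{-N}\mathbf{k}'))(\overline{g_{N}}(2^{-N}\mathbf{k})-\overline{g_{N}}(2^{-N}\mathbf{k}''))]|$, so dividing by $2|\mu_{N;\mathbf{k},\mathbf{k}',\mathbf{k}''}|$ and using $\sqrt{2}/2\le 1$ produces exactly \eqref{cucaoguji}.

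I do not anticipate a real obstacle: the statement is a routine $2\times 2$ linear-algebra sensitivity bound and every ingredient (the closed form \eqref{BVC123}, the identity $\mu_{N;\mathbf{k},\mathbf{k}',\mathbf{k}''}=ce-dh$, and the modulus identifications) is already in hand. The only point needing a little care is bounding the action of $\left[\begin{smallmatrix}e&-d\\-h&c\end{smallmatrix}\right]$ by the \emph{maximum} of the two reference-wave differences rather than by the coarser $\ell^{2}$-combination $\sqrt{(e^{2}+h^{2})+(c^{2}+d^{2})}$; this is precisely why one decomposes the product along the columns $(e,-h)^{T}$ and $(-d,c)^{T}$ and pulls $G$ out before invoking Cauchy--Schwarz. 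It is also worth noting that the hypothesis $\mu_{N;\mathbf{k},\mathbf{k}',\mathbf{k}''}\neq 0$ is exactly what makes both \eqref{BVC123} and \eqref{BVC1234} well defined, so no division-by-zero issue arises.
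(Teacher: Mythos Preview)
Your proof is correct and follows essentially the same approach as the paper: subtract the explicit formula \eqref{BVC123} from \eqref{BVC1234} and bound the action of $\frac{1}{2\mu_{N;\mathbf{k},\mathbf{k}',\mathbf{k}''}}\left[\begin{smallmatrix}e&-d\\-h&c\end{smallmatrix}\right]$ on the perturbation vector. The only cosmetic difference is in how the matrix bound is obtained---the paper uses the inequality $\|M\|_{2}^{2}\le\|M\|_{1}\|M\|_{\infty}$ to get $\|M\|_{2}\le 2\max\{|c|,|d|,|e|,|h|\}\le 2\max\{\sqrt{c^{2}+d^{2}},\sqrt{e^{2}+h^{2}}\}$, whereas you decompose along columns and apply Cauchy--Schwarz; both yield the same final constant after the factor of $2$ in the denominator.
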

\begin{proof}
For the matrix
$\left[\begin{array}{cccccccccc}
 e &-d \\-h &c\end{array}\right]$,
 it is straightforward
 that
 \begin{align}\label{fanshudengjiaxing}\begin{array}{lllllll}
 \Big\|\left[\begin{array}{cccccccccc}
 e &-d \\-h &c\end{array}\right]\Big\|^{2}_{2}&\leq\Big\|\left[\begin{array}{cccccccccc}
 e &-d \\-h &c\end{array}\right]\Big\|_{1}\Big\|\left[\begin{array}{cccccccccc}
 e &-d \\-h &c\end{array}\right]\Big\|_{\infty}\\
 &=\max\{|e|+|h|, |c|+|d|\}\max\{|e|+|d|, |h|+|c|\}.
 \end{array}\end{align}
Through the direct estimate we have
\begin{align}\label{qiujie8716}\begin{array}{lllllll}\begin{array}{lllllll}
\Big\|\displaystyle\frac{1}{2\mu_{N;\emph{\textbf{k}}, \emph{\textbf{k}}', \emph{\textbf{k}}''}}\left[\begin{array}{cccccccccc}
 e &-d \\-h &c\end{array}\right]\Big\|_{2}
\end{array}&\leq\displaystyle\frac{1}{|\mu_{N;\emph{\textbf{k}}, \emph{\textbf{k}}', \emph{\textbf{k}}''}|}
\max\{|e|,|d|,|h|,|c|\} \quad (\ref{qiujie8716}A)\\
&\leq\displaystyle\frac{1}{|\mu_{N;\emph{\textbf{k}}, \emph{\textbf{k}}', \emph{\textbf{k}}''}|}\max\{\sqrt{c^2+d^2}, \sqrt{e^2+h^2}\}\\
&=\frac{\max\{|g_{N}(2^{-N}\emph{\textbf{k}})-g_{N}(2^{-N}\emph{\textbf{k}}')|, |\overline{g_{N}}(2^{-N}\emph{\textbf{k}})-\overline{g_{N}}(2^{-N}\emph{\textbf{k}}'')|\}}{|\Im[(g_{N}(2^{-N}\emph{\textbf{k}})-g_{N}(2^{-N}\emph{\textbf{k}}^{'}))(\overline{g_{N}}(2^{-N}\emph{\textbf{k}})-\overline{g_{N}}(2^{-N}\emph{\textbf{k}}^{''}))]|},
(\ref{qiujie8716} B)
\end{array}
\end{align}
where $(\ref{qiujie8716}A)$ is derived from \eqref{fanshudengjiaxing}, and  $(\ref{qiujie8716}B)$ is  from  \eqref{cba1}
and \eqref{somecoefficients}.
Now \eqref{cucaoguji} is  from \eqref{BVC123}, \eqref{BVC1234} and \eqref{qiujie8716}.
\end{proof}

\begin{note}\label{wuchafenxil}
Recall that we use the formula in \eqref{formulationstep2} to approximate $f$ on $\Omega$,
namely,
\begin{align}\label{formulationstep20}f(\emph{\textbf{x}})\approx\sum_{\emph{\textbf{k}}\in  \Lambda_{\Omega,N}}\mathring{f}(2^{-N}\emph{\textbf{k}})\phi(2^{N}\emph{\textbf{x}}-\emph{\textbf{k}}).\end{align}
Correspondingly,  the approximation error is analyzed as follows,
\begin{align}\label{error11}\begin{array}{lllllll}
\displaystyle \|\big(f(\emph{\textbf{x}})-\sum_{\emph{\textbf{k}}\in  \Lambda_{\Omega,N}}\mathring{f}(2^{-N}\emph{\textbf{k}})\phi(2^{N}\emph{\textbf{x}}-\emph{\textbf{k}})\big)|_{\Omega}\|_{L^2(\mathbb{R}^d)}\\
\displaystyle\leq\|\big(f(\emph{\textbf{x}})-\sum_{\emph{\textbf{k}}\in  \Lambda_{\Omega,N}}f(2^{-N}\emph{\textbf{k}})\phi(2^{N}\cdot-\emph{\textbf{k}})\big)|_{\Omega}\|_{L^2(\mathbb{R}^d)}\quad (\ref{error11}A)\\
+\displaystyle\|\big(\sum_{\emph{\textbf{k}}\in  \Lambda_{\Omega,N}}(f(2^{-N}\emph{\textbf{k}})-
\mathring{f}(2^{-N}\emph{\textbf{k}}))\phi(2^{N}\emph{\textbf{x}}-\emph{\textbf{k}})\big)|_{\Omega}\|_{L^2(\mathbb{R}^d)}. \quad (\ref{error11}B)
\end{array}
\end{align}
The   term in $(\ref{error11}A)$ can be estimated by \eqref{cxzbc}.
To estimate the term in $(\ref{error11}B)$,
we next  analyze the upper bound in \eqref{cucaoguji} for $|f(2^{-N}\emph{\textbf{k}})-
\mathring{f}(2^{-N}\emph{\textbf{k}})|$.
\end{note}

\begin{rem}\label{FDVCX}
The upper  bound in \eqref{cucaoguji} consists of the  two terms
\begin{align}\label{PTGH}\frac{\max\{|g_{N}(2^{-N}\emph{\textbf{k}})-g_{N}(2^{-N}\emph{\textbf{k}}')|, |\overline{g_{N}}(2^{-N}\emph{\textbf{k}})-\overline{g_{N}}(2^{-N}\emph{\textbf{k}}'')|\}}{|\Im[(g_{N}(2^{-N}\emph{\textbf{k}})-g_{N}(2^{-N}\emph{\textbf{k}}^{'}))(\overline{g_{N}}(2^{-N}\emph{\textbf{k}})-\overline{g_{N}}(2^{-N}\emph{\textbf{k}}^{''}))]|}
\end{align}
and
\begin{align}\label{PTGH1}
\sqrt{(I_{N,\emph{\textbf{k}}'}-A_{N,\emph{\textbf{k}},\emph{\textbf{k}}'})^{2}+(I_{N,\emph{\textbf{k}}''}-A_{N,\emph{\textbf{k}},\emph{\textbf{k}}''})^{2}},
\end{align}
where $(\emph{\textbf{k}}, \emph{\textbf{k}}', \emph{\textbf{k}}'')\in  \Xi_{\Omega,N}$.
As mentioned previously,  $I_{N,\emph{\textbf{k}}'}\approx A_{N,\emph{\textbf{k}}, \emph{\textbf{k}}'}
$ and $I_{N,\emph{\textbf{k}}''}\approx A_{N,\emph{\textbf{k}}, \emph{\textbf{k}}''}
$. Then the term   \eqref{PTGH1} is small.
 Although  the term  \eqref{PTGH} is independent of $f$, it is  complicated formally  and  related to $\emph{\textbf{k}}, \emph{\textbf{k}}'$ and $ \emph{\textbf{k}}''$.  It is expected to be  uniformly bounded on
$\Xi_{\Omega,N}.$
To sum up, if  the term in \eqref{PTGH1} is small and  the term in \eqref{PTGH}
is uniformly bounded then     it follows from \eqref{cucaoguji} that  $|f(2^{-N}\emph{\textbf{k}})-
\mathring{f}(2^{-N}\emph{\textbf{k}})|$ can be controlled. Such a perspective  will be helpful for estimating
$(\ref{error11}B)$ in section \ref{zqsd}.
\end{rem}

\subsection{Admissible reference wave}\label{yixiedingyi}
As in Remark \ref{FDVCX}, it is expected that the  term \eqref{PTGH} is uniformly bounded on $\Xi_{\Omega,N}.$
Motivated by such an expectation, in what follows we introduce the concept of an  admissible reference wave.

\begin{defi}\label{addefinition}
Let $g_{N}$ be the    reference  wave used at level $N$  and
the finite  set  $\Xi_{\Omega,N}\subseteq \mathbb{R}^{3d}$.
We say that $g_{N}$ is $\gamma_{N}$-admissible on $\Xi_{\Omega,N}$ w.r.t   level $N$ where $0<\gamma_{N}<\infty$, if the following two items holds:

(i) For any $(\emph{\textbf{k}}, \emph{\textbf{k}}', \emph{\textbf{k}}'')\in
\Xi_{\Omega,N}$, it holds that  $g_{N}(2^{-N}\emph{\textbf{k}})\neq g_{N}(2^{-N}\emph{\textbf{k}}')$ and
$g_{N}(2^{-N}\emph{\textbf{k}})\neq g_{N}(2^{-N}\emph{\textbf{k}}'')$.

(ii) Admissibility condition:
\begin{align}\label{p45tgy}
\displaystyle\max_{(\emph{\textbf{k}}, \emph{\textbf{k}}', \emph{\textbf{k}}'')\in  \Xi_{\Omega,N}}\frac{\max\{|g_{N}(2^{-N}\emph{\textbf{k}})-g_{N}(2^{-N}\emph{\textbf{k}}')|, |\overline{g_{N}}(2^{-N}\emph{\textbf{k}})-\overline{g_{N}}(2^{-N}\emph{\textbf{k}}'')|\}}{|\Im[(g_{N}(2^{-N}\emph{\textbf{k}})-g_{N}(2^{-N}\emph{\textbf{k}}^{'}))(\overline{g_{N}}(2^{-N}\emph{\textbf{k}})-\overline{g_{N}}(2^{-N}\emph{\textbf{k}}^{''}))]|}\leq\gamma_{N}<\infty.
\end{align}
\end{defi}

Based on Definition \ref{addefinition}, next  we give the definition of  uniform admissibility.

\begin{defi}\label{yizhirongxuxing}
 Suppose that $\{g_{N}\}^{\infty}_{N=1}$ is a sequence of reference waves such that every  $g_{N}$ is
$\gamma$-admissible on $\Xi_{\Omega,N}$ w.r.t  level $N$. Moreover,  if the uniform   boundedness  condition holds:
\begin{align}\label{PLKCV}
\sup_{N\geq1}\sup_{(\emph{\textbf{k}}, \emph{\textbf{k}}', \emph{\textbf{k}}'')\in
\Xi_{\Omega,N}}\{|g_{N}(2^{-N}\emph{\textbf{k}})|, |g_{N}(2^{-N}\emph{\textbf{k}}')|, |g_{N}(2^{-N}\emph{\textbf{k}}'')|\}<\infty
\end{align}
 then we say that $\{g_{N}\}^{\infty}_{N=1}$ is uniformly $\gamma$-admissible on $\{\Xi_{\Omega,N}\}^{\infty}_{N=1}$.
\end{defi}

The following concerns on the correlation between the admissibility and the uniqueness of the equation  system
\eqref{intensity8765}.

\begin{prop}\label{rongxuxing} Suppose that $\Xi_{\Omega,N}\subseteq \mathbb{R}^{3d}$
and $g_{N}$
are  as in Theorem \ref{prop123}.
If $g_{N}$ is $\gamma_{N}$-admissible on $\Xi_{\Omega,N}$ w.r.t   level $N$,
then for any $(\emph{\textbf{k}}, \emph{\textbf{k}}', \emph{\textbf{k}}'')\in \Xi_{\Omega,N}$ it holds that
$\mu_{N;\emph{\textbf{k}}, \emph{\textbf{k}}', \emph{\textbf{k}}''}$ defined in \eqref{cba1} is nonzero, or equivalently
the equation  system \eqref{intensity8765} has a unique solution.
\end{prop}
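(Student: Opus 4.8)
The plan is to read off the conclusion directly from the admissibility hypothesis together with Theorem \ref{prop123}. Fix an arbitrary triple $(\mathbf{k}, \mathbf{k}', \mathbf{k}'') \in \Xi_{\Omega,N}$ and abbreviate $p = g_{N}(2^{-N}\mathbf{k})$, $q = g_{N}(2^{-N}\mathbf{k}')$, $r = g_{N}(2^{-N}\mathbf{k}'')$, so that by \eqref{cba1} one has $\mu_{N;\mathbf{k}, \mathbf{k}', \mathbf{k}''} = -\Im[(p-q)(\bar p - \bar r)]$, and the denominator appearing in the admissibility condition \eqref{p45tgy} is exactly $|\mu_{N;\mathbf{k}, \mathbf{k}', \mathbf{k}''}|$.

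First I would invoke item (i) of Definition \ref{addefinition}: since $g_{N}$ is $\gamma_{N}$-admissible on $\Xi_{\Omega,N}$, we have $p \neq q$ and $p \neq r$, hence $|p-q| > 0$ and $|\bar p - \bar r| = |p-r| > 0$, so the numerator
\[
\max\{|p-q|,\ |\bar p - \bar r|\}
\]
in \eqref{p45tgy} is strictly positive. Next I would use item (ii): the admissibility condition \eqref{p45tgy} asserts that the quotient of this strictly positive numerator by $|\Im[(p-q)(\bar p - \bar r)]|$ is bounded above by $\gamma_{N} < \infty$, which forces the denominator to be nonzero, since a vanishing denominator against a positive numerator would make the quotient $+\infty$ and violate finiteness. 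Therefore $|\Im[(p-q)(\bar p - \bar r)]| > 0$, i.e. $\mu_{N;\mathbf{k}, \mathbf{k}', \mathbf{k}''} \neq 0$; as $(\mathbf{k}, \mathbf{k}', \mathbf{k}'')$ was arbitrary in $\Xi_{\Omega,N}$, this holds for every triple in $\Xi_{\Omega,N}$.

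Finally, the equivalence with the unique solvability of \eqref{intensity8765} is precisely the first part of Theorem \ref{prop123}, which states that $f(2^{-N}\mathbf{k})$ is the unique solution of \eqref{intensity8765} if and only if $\mu_{N;\mathbf{k}, \mathbf{k}', \mathbf{k}''} \neq 0$; combining this with the previous step completes the argument. There is essentially no technical obstacle: the only point worth stating carefully is the convention that in \eqref{p45tgy} a zero value of the denominator is understood to break the finiteness bound, so that item (ii) of Definition \ref{addefinition} genuinely encodes non-degeneracy of $\mu_{N;\mathbf{k}, \mathbf{k}', \mathbf{k}''}$ and not merely a bound on a ratio of already-finite quantities.
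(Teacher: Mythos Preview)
Your proposal is correct and follows essentially the same approach as the paper's proof: use item (i) of Definition \ref{addefinition} to make the numerator in \eqref{p45tgy} strictly positive, observe that a vanishing $\mu_{N;\mathbf{k}, \mathbf{k}', \mathbf{k}''}$ would then force the ratio to be infinite and contradict the finite bound $\gamma_N$, and conclude via Theorem \ref{prop123}. The paper phrases the middle step as a contradiction while you argue it directly, but the logic is identical.
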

\begin{proof}
It follows from  Definition \ref{addefinition} (i) that
\begin{align}\notag \max\{|g_{N}(2^{-N}\emph{\textbf{k}})-g_{N}(2^{-N}\emph{\textbf{k}}')|, |\overline{g_{N}}(2^{-N}\emph{\textbf{k}})-\overline{g_{N}}(2^{-N}\emph{\textbf{k}}'')|\}
>0.\end{align} If $0=\mu_{N;\emph{\textbf{k}}, \emph{\textbf{k}}', \emph{\textbf{k}}''}=-\Im[(g_{N}(2^{-N}\emph{\textbf{k}})-g_{N}(2^{-N}\emph{\textbf{k}}^{'}))(\overline{g_{N}}(2^{-N}\emph{\textbf{k}})-\overline{g_{N}}(2^{-N}\emph{\textbf{k}}^{''}))]$
then
$$\frac{\max\{|g_{N}(2^{-N}\emph{\textbf{k}})-g_{N}(2^{-N}\emph{\textbf{k}}')|, |\overline{g_{N}}(2^{-N}\emph{\textbf{k}})-\overline{g_{N}}(2^{-N}\emph{\textbf{k}}'')|\}}{|\Im[(g_{N}(2^{-N}\emph{\textbf{k}})-g_{N}(2^{-N}\emph{\textbf{k}}^{'}))(\overline{g_{N}}(2^{-N}\emph{\textbf{k}})-\overline{g_{N}}(2^{-N}\emph{\textbf{k}}^{''}))]|}=\infty.$$
Consequently,  $g_{N}$ is not  $\gamma_{N}$-admissible on $\Xi_{\Omega,N}.$
This leads to a contradiction.  Then $\mu_{N;\emph{\textbf{k}}, \emph{\textbf{k}}', \emph{\textbf{k}}''}\neq0$.
By Theorem \ref{prop123}, the equation  system \eqref{intensity8765} has a unique solution.
\end{proof}

\begin{rem}
(1) Suppose that
$\{g_{N}\}^{\infty}_{N=1}$ is uniformly $\gamma$-admissible on $\{\Xi_{\Omega,N}\}^{\infty}_{N=1}$,
where $\Xi_{\Omega,N}$ is as in Theorem \ref{chubuguji} such that for any $(\emph{\textbf{k}}, \emph{\textbf{k}}', \emph{\textbf{k}}'')
\in \Xi_{\Omega,N}$
we have $\emph{\textbf{k}}\in \Lambda_{\Omega,N}$.
For such $\emph{\textbf{k}}$   it follows from \eqref{cucaoguji} and  \eqref{p45tgy} (with $\gamma_{N}=\gamma$)  that
\begin{align}\label{bzxsd}
|f(2^{-N}\emph{\textbf{k}})-\mathring{f}(2^{-N}\emph{\textbf{k}})|\leq\gamma\sqrt{(I_{N,\emph{\textbf{k}}'}-A_{N,\emph{\textbf{k}},\emph{\textbf{k}}'})^{2}+(I_{N,\emph{\textbf{k}}''}-A_{N,\emph{\textbf{k}},\emph{\textbf{k}}''})^{2}}.
\end{align}
(2) For a general reference wave $g_{N}$, it is complicated  to establish the bound $\gamma_{N}$ in
\eqref{p45tgy}. Recall that the plane  and spherical reference waves are two types of reference waves commonly applied in holography
(c.f. \cite{Fourieroptics,holographic}).
We will address the admissibility for such two types of  reference waves.
\end{rem}

\subsection{Admissible plane and spherical waves on $\mathbb{R}^d$}\label{JKKKK12345}
We generalize the  concepts  of plane and spherical waves  from the case of $d\leq3$ (c.f. \cite{Fourieroptics,holographic})
 to the general  case of $d\in \mathbb{N}$. Throughout the paper, the plane and spherical waves are denoted by
  $ae^{\textbf{i}\mathcal{K}\cdot \emph{\textbf{x}}}$ and $\frac{b}{\|\emph{\textbf{x}}\|_{2}}e^{\textbf{i}\nu\|\emph{\textbf{x}}\|_{2}}$ ($\mathcal{K}, \emph{\textbf{x}}\in \mathbb{R}^{d}, a>0, b>0,  \nu\in \mathbb{R}\setminus\{0\}$), respectively. 
  In optics, $\mathcal{K}$ is commonly denoted by  $(\frac{2\pi}{\lambda_{1}}, \ldots, \frac{2\pi}{\lambda_{d}})$
such that $\lambda_{j}$ and  $\frac{2\pi}{\lambda_{j}}$ are  called the \textbf{wave length} and   \textbf{wave number} w.r.t  the $j$th coordinate direction, respectively. Similarly, $|\nu|$ is  referred to as the wave number of the spherical wave.
The constants $a$ and $b$ are called  the  amplitudes of the plane wave and the spherical wave (at the unit sphere
 $\{\emph{\textbf{x}}\in \mathbb{R}^d: \|\emph{\textbf{x}}\|_{2}=1\}$),  respectively.

\begin{note}\label{budnegyuling}
From its definition,  a spherical wave is   defined on   $\mathbb{R}^{d}\setminus\{\textbf{0}\}$.
This is  different from the domain  $\mathbb{R}^{d}$ of a plane wave.
If the reference wave $g_{N}$  in the intensity formula   \eqref{intensity123}  is chosen as  a  spherical wave,
 then it is required that  $\textbf{0}\notin
 \Lambda_{\Omega,N}.$ The following concerns on a sufficient and necessary condition for
 $L_{l,\min}$ and $L_{l,\max}$ (defined in \eqref{zuixiao})
  such that $\textbf{0}\notin
 \Lambda_{\Omega,N}$ for every level  $N\geq1$.
\end{note}

\begin{prop}\label{dbdn}
As in \eqref{yixiejihao}, the ROI is denoted by $\Omega$ such that $\mathbb{Z}_{\Omega}^{d}:=\Omega\cap \mathbb{Z}^{d}\neq\emptyset$.
Moreover, as in \eqref{fft1} the set $\Lambda_{\Omega,N}$ is defined as
 \begin{align}\label{fft12345}\begin{array}{lll}
 \Lambda_{\Omega,N}=\{(k_{1},\ldots,  k_{d})\in \mathbb{Z}^{d}: 2^{N}L_{l,\min}-M_{l}\leq k_{l}\leq2^{N}L_{l,\max}, l=1, \ldots, d\},
\end{array}\end{align}
where every  $M_{l}>0$ and
\begin{align}\label{zuocao}L_{l,\min}=\min\{k_{l}: (k_{1}, \ldots, k_{d})\in\mathbb{Z}_{\Omega}^{d}\}, L_{l,\max}=\max\{k_{l}: (k_{1}, \ldots, k_{d})\in\mathbb{Z}_{\Omega}^{d}\}.\end{align}
Then $\textbf{0}\notin
 \Lambda_{\Omega,N}$ for every level  $N\geq1$ if and only if  there exists $l_{0}\in
 \{1, \ldots, d\}$ such that  $2L_{l_{0},\min}-M_{l_{0}}>0$ or
 $L_{l_{0},\max}<0$. Moreover, if $\textbf{0}\notin
 \Lambda_{\Omega,N}$ for every  $N\geq1$ then
 $\|2^{-N}\emph{\textbf{k}}\|_{2}\geq\min\{|L_{l_{0},\min}-2^{-1}M_{l_{0}}|, |L_{l_{0},\max}|\}>0$
   for any  $\emph{\textbf{k}}\in \Lambda_{\Omega,N}$, where $l_{0}\in
 \{1, \ldots, d\}$ is as above
   such that $2L_{l_{0},\min}-M_{l_{0}}>0$ or
 $L_{l_{0},\max}<0$.
\end{prop}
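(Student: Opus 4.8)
The plan is to decide membership of $\textbf{0}$ in $\Lambda_{\Omega,N}$ coordinate by coordinate. By \eqref{fft12345} we have $\Lambda_{\Omega,N}=\prod_{l=1}^{d}[\,2^{N}L_{l,\min}-M_{l},\ 2^{N}L_{l,\max}\,]\cap\mathbb{Z}^{d}$, so the origin lies in $\Lambda_{\Omega,N}$ exactly when $2^{N}L_{l,\min}-M_{l}\le 0\le 2^{N}L_{l,\max}$ for every $l\in\{1,\ldots,d\}$; hence $\textbf{0}\notin\Lambda_{\Omega,N}$ if and only if there is an index $l$ (a priori depending on $N$) with $2^{N}L_{l,\min}-M_{l}>0$ or $2^{N}L_{l,\max}<0$, the latter being equivalent to $L_{l,\max}<0$ since $2^{N}>0$.

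For the ``only if'' direction I would simply specialize the hypothesis to $N=1$: if $\textbf{0}\notin\Lambda_{\Omega,N}$ for every $N\ge1$, then in particular $\textbf{0}\notin\Lambda_{\Omega,1}$, and the observation above yields an index $l_{0}$ with $2L_{l_{0},\min}-M_{l_{0}}>0$ or $L_{l_{0},\max}<0$, which is precisely the asserted condition. For the ``if'' direction, fix such an $l_{0}$ and check that the corresponding strict inequality survives at every level: if $L_{l_{0},\max}<0$ then $2^{N}L_{l_{0},\max}<0$ for all $N\ge1$; if $2L_{l_{0},\min}-M_{l_{0}}>0$, then $M_{l_{0}}>0$ forces $L_{l_{0},\min}\ge1$, so $N\mapsto 2^{N}L_{l_{0},\min}$ is nondecreasing and $2^{N}L_{l_{0},\min}-M_{l_{0}}\ge 2L_{l_{0},\min}-M_{l_{0}}>0$ for all $N\ge1$. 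In either case the $l_{0}$-th coordinate of every point of $\Lambda_{\Omega,N}$ is nonzero, so $\textbf{0}\notin\Lambda_{\Omega,N}$.

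For the ``moreover'' bound I would start from $\|2^{-N}\emph{\textbf{k}}\|_{2}\ge 2^{-N}|k_{l_{0}}|$ for $\emph{\textbf{k}}=(k_{1},\ldots,k_{d})\in\Lambda_{\Omega,N}$ and estimate $2^{-N}|k_{l_{0}}|$ from below via the $l_{0}$-th defining constraint. If $2L_{l_{0},\min}-M_{l_{0}}>0$, then $k_{l_{0}}\ge 2^{N}L_{l_{0},\min}-M_{l_{0}}>0$, hence $2^{-N}|k_{l_{0}}|\ge L_{l_{0},\min}-2^{-N}M_{l_{0}}\ge L_{l_{0},\min}-2^{-1}M_{l_{0}}=|L_{l_{0},\min}-2^{-1}M_{l_{0}}|$ (using $N\ge1$ and $M_{l_{0}}>0$), and since $L_{l_{0},\min}\ge1$ gives $L_{l_{0},\max}\ge1$ both $|L_{l_{0},\min}-2^{-1}M_{l_{0}}|$ and $|L_{l_{0},\max}|$ are positive. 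If instead $L_{l_{0},\max}<0$, then $k_{l_{0}}\le 2^{N}L_{l_{0},\max}<0$, so $2^{-N}|k_{l_{0}}|\ge -L_{l_{0},\max}=|L_{l_{0},\max}|$, and $L_{l_{0},\min}\le L_{l_{0},\max}<0<2^{-1}M_{l_{0}}$ again makes both quantities positive. Either way $\|2^{-N}\emph{\textbf{k}}\|_{2}\ge\min\{|L_{l_{0},\min}-2^{-1}M_{l_{0}}|,\ |L_{l_{0},\max}|\}>0$.

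The argument is elementary, and the one point I would watch most carefully is that the index witnessing ``$\textbf{0}\notin\Lambda_{\Omega,N}$'' is a priori $N$-dependent, so one must observe that the index produced at $N=1$ already works for every $N$; this relies on the monotonicity $2^{N}L_{l_{0},\min}\ge 2L_{l_{0},\min}$, valid only because $2L_{l_{0},\min}-M_{l_{0}}>0$ together with $M_{l_{0}}>0$ forces $L_{l_{0},\min}\ge1$, and that same fact (hence $L_{l_{0},\max}\ge1$) excludes the edge case $L_{l_{0},\max}=0$, which would otherwise destroy the strict positivity claimed in the ``moreover'' part.
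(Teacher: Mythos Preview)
Your proof is correct and follows essentially the same approach as the paper's: both rewrite $\Lambda_{\Omega,N}$ as the integer points of a coordinate box, obtain the necessary condition by specializing to $N=1$, verify sufficiency by showing the $l_{0}$-th interval misses $0$ for all $N$ via the sign of $L_{l_{0},\min}$ (respectively $L_{l_{0},\max}$), and then bound $\|2^{-N}\emph{\textbf{k}}\|_{2}$ from below through $|2^{-N}k_{l_{0}}|$ using the same case split. Your write-up is in fact slightly more explicit than the paper's (e.g., invoking the integrality $L_{l_{0},\min}\ge1$ and handling the positivity of the minimum in the $L_{l_{0},\max}<0$ case), but there is no substantive difference in strategy.
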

\begin{proof}
We prove the first part.
Note that  $\Lambda_{\Omega,N}$ can be also expressed  as
\begin{align}\label{lzisx} \Lambda_{\Omega,N}=\big([2^{N}L_{1,\min}-M_{1}, 2^{N}L_{1,\max}]\times \cdots\times [2^{N}L_{d,\min}-M_{d},  2^{N}L_{d,\max}]\big)\cap \mathbb{Z}^{d}.\end{align}
The \textbf{necessity} can be obtained by \eqref{lzisx} with $N=1$.
\textbf{Sufficiency}: If $2L_{l_{0},\min}-M_{l_{0}}>0$ or $L_{l_{0},\max}<0$
then $0\notin [2L_{l_{0},\min}-M_{l_{0}}, 2L_{l_{0},\max}]$. By \eqref{lzisx} we have
$\textbf{0}\notin\Lambda_{\Omega,1}.$
If $2L_{l_{0},\min}-M_{l_{0}}>0$ then it follows from $M_{l_{0}}>0$ that  $2^{N}L_{l_{0},\min}-M_{l_{0}}>0$ for all $N>1$,
and consequently $0\notin [2^{N}L_{l_{0},\min}-M_{l_{0}}, 2^{N}L_{l_{0},\max}]$
and $\textbf{0}\notin\Lambda_{\Omega,N}$.
Similarly, if $L_{l_{0},\max}<0$ then for all $N>1$  we have $2^{N}L_{l_{0},\max}<0$ and consequently  $\textbf{0}\notin\Lambda_{\Omega,N}$.

%
%

We next prove the second part. Choose any $\emph{\textbf{k}}=(k_{1}, \ldots, k_{d})\in \Lambda_{\Omega,N}$
such that $k_{l_{0}}\in [2^{N}L_{l_{0},\min}-M_{l_{0}}, 2^{N}L_{l_{0},\max}]$.
Since $\textbf{0}\notin\Lambda_{\Omega,N}$, by the above discussion we have  $0\notin [2^{N}L_{l_{0},\min}-M_{l_{0}}, 2^{N}L_{l_{0},\max}]$.
Then $\|2^{-N}\emph{\textbf{k}}\|_{2}\geq|2^{-N}k_{l_{0}}|\geq\min\{|L_{l_{0},\min}-2^{-N}M_{l_{0}}|,
|L_{l_{0},\max}|\}$.   If $2L_{l_{0},\min}-M_{l_{0}}>0$ then it follows from $M_{l_{0}}>0$ that
$L_{l_{0},\min}-2^{-N}M_{l_{0}}\geq L_{l_{0},\min}-2^{-1}M_{l_{0}}>0$. Therefore, $
\|2^{-N}\emph{\textbf{k}}\|_{2}\geq\min\{|L_{l_{0},\min}-2^{-1}M_{l_{0}}|, |L_{l_{0},\max}|\}$
$=|L_{l_{0},\min}-2^{-1}M_{l_{0}}|$.
For the case of $L_{l_{0},\max}<0$, this inequality can be proved similarly.
\end{proof}

\subsubsection{A class of uniformly  admissible plane waves}\label{pmianboqingxing}
If
the reference wave $g_{N}$ is chosen  as the plane wave  $a_{N}e^{\textbf{i}\mathcal{K}_{N}\cdot \emph{\textbf{x}}}$,
then the term in \eqref{p45tgy} can be simplified as follows,
 \begin{align}\label{kkk2345}\begin{array}{lllllll}\displaystyle
 \max_{(\emph{\textbf{k}}, \emph{\textbf{k}}', \emph{\textbf{k}}'')\in  \Xi_{\Omega,N}}\frac{\max\{|g_{N}(2^{-N}\emph{\textbf{k}})-g_{N}(2^{-N}\emph{\textbf{k}}')|, |\overline{g_{N}}(2^{-N}\emph{\textbf{k}})-\overline{g_{N}}(2^{-N}\emph{\textbf{k}}'')|\}}{|\Im[(g_{N}(2^{-N}\emph{\textbf{k}})-g_{N}(2^{-N}\emph{\textbf{k}}^{'}))(\overline{g_{N}}(2^{-N}\emph{\textbf{k}})-\overline{g_{N}}(2^{-N}\emph{\textbf{k}}^{''}))]|}\\
\displaystyle =\max_{(\emph{\textbf{k}}, \emph{\textbf{k}}', \emph{\textbf{k}}'')\in  \Xi_{\Omega,N}}\frac{\max\{|1-e^{\textbf{i}\mathcal{K}_{N}\cdot2^{-N}(\emph{\textbf{k}}-\emph{\textbf{k}}')}|, |1-e^{-\textbf{i}\mathcal{K}_{N}\cdot2^{-N}(\emph{\textbf{k}}-\emph{\textbf{k}}'')}|\}}{a_{N}|\Im[(1-e^{\textbf{i}\mathcal{K}_{N}\cdot2^{-N}(\emph{\textbf{k}}-\emph{\textbf{k}}')})(1-e^{-\textbf{i}\mathcal{K}_{N}\cdot2^{-N}(\emph{\textbf{k}}-\emph{\textbf{k}}'')})]|}. \quad
(\ref{kkk2345}A)
 \end{array}\end{align}
By direct calculation we have that Definition \ref{addefinition} (i) is equivalent to
\begin{align}\label{TCV}\min\{|1-e^{\textbf{i}\mathcal{K}_{N}\cdot2^{-N}(\emph{\textbf{k}}-\emph{\textbf{k}}')}|,
|1-e^{-\textbf{i}\mathcal{K}_{N}\cdot2^{-N}(\emph{\textbf{k}}-\emph{\textbf{k}}'')}|\}>0.\end{align}
On the other hand, $\max\{|1-e^{\textbf{i}\mathcal{K}_{N}\cdot2^{-N}(\emph{\textbf{k}}-\emph{\textbf{k}}')}|, |1-e^{-\textbf{i}\mathcal{K}_{N}\cdot2^{-N}(\emph{\textbf{k}}-\emph{\textbf{k}}'')}|\}$
$\leq2.
$
Then it follows from  $(\ref{kkk2345}A)$  that   the plane wave $g_{N}$ being admissible is equivalent to
\eqref{TCV} and
\begin{align}\label{mbvc}
\max_{(\emph{\textbf{k}}, \emph{\textbf{k}}', \emph{\textbf{k}}'')\in\Xi_{\Omega,N}}\frac{1}{a_{N}|\Im[(1-e^{\textbf{i}\mathcal{K}_{N}\cdot2^{-N}(\emph{\textbf{k}}-\emph{\textbf{k}}')})(1-e^{-\textbf{i}\mathcal{K}_{N}\cdot2^{-N}(\emph{\textbf{k}}-\emph{\textbf{k}}'')})]|}<\infty.
\end{align}
Now \eqref{TCV} and \eqref{mbvc} implies that $g_{N}$ being admissible
 depends on the  values of $e^{\textbf{i}\mathcal{K}_{N}\cdot \emph{\textbf{x}}}$ at   $2^{-N}(\emph{\textbf{k}}-\emph{\textbf{k}}')$ and $2^{-N}(\emph{\textbf{k}}-\emph{\textbf{k}}'')$.
 Motivated by this, in what follows we construct  a class of uniformly  admissible plane waves
 for Scheme \ref{xinsuanfa}.

\begin{theo}\label{rongxutiaojianpingm}
Suppose that $\{g_{N}\}^{\infty}_{N=1}$ is  a sequence of plane reference waves where  $g_{N}(x_{1}, \ldots, x_{d})=a_{N}e^{\textbf{i}(\frac{2\pi}{\lambda_{1,N}}x_{1}+\cdots+\frac{2\pi}{\lambda_{d,N}}x_{d})}$ with $a_{N}>0$.
Moreover, there exists $j_{0}\in\{1, \ldots, d\}$
such that $\frac{2^{-N}}{\lambda_{j_{0},N}}\notin \mathbb{Z}$.
The two sets   $\Lambda_{\Omega,N}\subseteq \mathbb{Z}^{d}$ and $\Xi_{\Omega,N}\subseteq \mathbb{R}^{3d}$  are as  in  Scheme \ref{xinsuanfa}
such that for  any $(\emph{\textbf{k}}, \emph{\textbf{k}}', \emph{\textbf{k}}'')\in \Xi_{\Omega,N}$
it holds that
$\emph{\textbf{k}}\in \Lambda_{\Omega,N}$
and
\begin{align}\label{bvxcv}\emph{\textbf{k}}'=\emph{\textbf{k}}+\emph{\textbf{e}}_{j_{0}},
\emph{\textbf{k}}''=\emph{\textbf{k}}-\emph{\textbf{e}}_{j_{0}}, \end{align}
 where  $\emph{\textbf{e}}_{j_{0}}=(0, \ldots, 0, 1, 0, \ldots, 0)$
with the $j_{0}$th-coordinate being  $1$ and other coordinates  being zero.
%
Then every $g_{N}$ is $\gamma$-admissible on
$\Xi_{\Omega,N}$  w.r.t   level $N$  if  \begin{align}\label{KKK2345678}a_{N}|\sin(2^{-N}\frac{2\pi}{\lambda_{j_{0},N}})\sin^{2}(2^{-N-1}\frac{2\pi}{\lambda_{j_{0},N}})|\geq\frac{1}{2\gamma}>0.\end{align} Moreover, if \eqref{KKK2345678} holds for every level  $N$ and $\sup_{k\geq1} a_{k}<\infty$ then
 $\{g_{N}\}^{\infty}_{N=1}$
is a sequence of uniformly $\gamma$-admissible plane waves on $\{\Xi_{\Omega,N}\}^{\infty}_{N=1}$.

\end{theo}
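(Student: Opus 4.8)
The plan is to verify the two requirements of Definition~\ref{addefinition} directly, exploiting the fact that the structural choice \eqref{bvxcv} makes every relevant phase increment coincide, so that the maximum over $\Xi_{\Omega,N}$ collapses to a single quantity depending only on $N$ and $j_{0}$. Write $\theta_{N}:=2^{-N}\frac{2\pi}{\lambda_{j_{0},N}}$. For $(\emph{\textbf{k}},\emph{\textbf{k}}',\emph{\textbf{k}}'')\in\Xi_{\Omega,N}$ with $\emph{\textbf{k}}'=\emph{\textbf{k}}+\textbf{e}_{j_{0}}$ and $\emph{\textbf{k}}''=\emph{\textbf{k}}-\textbf{e}_{j_{0}}$, one has $\mathcal{K}_{N}\cdot2^{-N}(\emph{\textbf{k}}-\emph{\textbf{k}}')=-\theta_{N}$ and $\mathcal{K}_{N}\cdot2^{-N}(\emph{\textbf{k}}-\emph{\textbf{k}}'')=\theta_{N}$, so that in the simplified expression \eqref{kkk2345} both complex increments $1-e^{\textbf{i}\mathcal{K}_{N}\cdot2^{-N}(\emph{\textbf{k}}-\emph{\textbf{k}}')}$ and $1-e^{-\textbf{i}\mathcal{K}_{N}\cdot2^{-N}(\emph{\textbf{k}}-\emph{\textbf{k}}'')}$ are equal to $1-e^{-\textbf{i}\theta_{N}}$.

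First I would dispatch condition (i). By the reduction above, $g_{N}(2^{-N}\emph{\textbf{k}})=g_{N}(2^{-N}\emph{\textbf{k}}')$ would force $e^{-\textbf{i}\theta_{N}}=1$, i.e.\ $\frac{2^{-N}}{\lambda_{j_{0},N}}\in\mathbb{Z}$; the standing hypothesis $\frac{2^{-N}}{\lambda_{j_{0},N}}\notin\mathbb{Z}$ (which is in any case implied by \eqref{KKK2345678}, since $\sin(\theta_{N}/2)\neq0$ there) excludes this, and the same applies to $\emph{\textbf{k}}''$; equivalently \eqref{TCV} holds because $|1-e^{-\textbf{i}\theta_{N}}|=2|\sin(\theta_{N}/2)|>0$. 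Next, for condition (ii), I would use $1-e^{-\textbf{i}\theta}=2\textbf{i}\,e^{-\textbf{i}\theta/2}\sin(\theta/2)$ to get $(1-e^{-\textbf{i}\theta_{N}})^{2}=-4\sin^{2}(\theta_{N}/2)\,e^{-\textbf{i}\theta_{N}}$, so that in \eqref{kkk2345} the numerator equals $2|\sin(\theta_{N}/2)|$ and the denominator equals $a_{N}\,|\Im[(1-e^{-\textbf{i}\theta_{N}})^{2}]|=4a_{N}\sin^{2}(\theta_{N}/2)|\sin\theta_{N}|$. Hence the admissibility fraction is exactly
\[
\frac{1}{2a_{N}\,|\sin(\theta_{N}/2)|\,|\sin\theta_{N}|},
\]
independent of $\emph{\textbf{k}}$, so the maximum in \eqref{p45tgy} equals this single value. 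Since $|\sin(\theta_{N}/2)|\le1$ gives $|\sin(\theta_{N}/2)|\ge\sin^{2}(\theta_{N}/2)$, I would bound it by $\dfrac{1}{2a_{N}|\sin\theta_{N}\,\sin^{2}(\theta_{N}/2)|}\le\gamma$, the last step being precisely \eqref{KKK2345678}; note that \eqref{KKK2345678} also guarantees $\sin\theta_{N}\neq0$ and $\sin(\theta_{N}/2)\neq0$, so all quantities above are finite and nonzero. This establishes that $g_{N}$ is $\gamma$-admissible on $\Xi_{\Omega,N}$ with $\gamma_{N}=\gamma$.

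Finally, for the uniform statement I would simply observe that a plane wave has $|g_{N}(\emph{\textbf{x}})|\equiv a_{N}$, so under $\sup_{k\ge1}a_{k}<\infty$ the uniform boundedness condition \eqref{PLKCV} holds automatically; combined with the per-level $\gamma$-admissibility just proved (valid at every level $N$ once \eqref{KKK2345678} holds at every level), Definition~\ref{yizhirongxuxing} yields that $\{g_{N}\}_{N=1}^{\infty}$ is uniformly $\gamma$-admissible on $\{\Xi_{\Omega,N}\}_{N=1}^{\infty}$. There is no genuine obstacle here: the argument is a short trigonometric computation. The only points requiring care are recognizing that the choice \eqref{bvxcv} forces the two complex increments to coincide — which is exactly what removes the dependence on $\emph{\textbf{k}}$, hence on the sampling set — and applying the elementary estimate $|\sin(\theta_{N}/2)|\le1$ in the correct direction to pass from the exact value of the fraction to the slightly non-sharp sufficient condition \eqref{KKK2345678}.
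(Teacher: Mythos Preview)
Your proof is correct and follows essentially the same route as the paper: reduce everything to the single phase $\theta_{N}=2^{-N}\frac{2\pi}{\lambda_{j_{0},N}}$ via the choice \eqref{bvxcv}, verify condition~(i) from $\frac{2^{-N}}{\lambda_{j_{0},N}}\notin\mathbb{Z}$, compute the imaginary part in the denominator as $4a_{N}\sin^{2}(\theta_{N}/2)|\sin\theta_{N}|$, and invoke \eqref{KKK2345678}. The only cosmetic difference is that you compute the numerator exactly as $2|\sin(\theta_{N}/2)|$ and then weaken via $|\sin(\theta_{N}/2)|\ge\sin^{2}(\theta_{N}/2)$, whereas the paper immediately uses the cruder bound $|1-e^{-\textbf{i}\theta_{N}}|\le2$ on the numerator; both paths land on the identical expression $\dfrac{1}{2a_{N}|\sin\theta_{N}\sin^{2}(\theta_{N}/2)|}$, so there is no substantive divergence.
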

\begin{proof}
Denote $\mathcal{K}_{N}:=(\frac{2\pi}{\lambda_{1,N}}, \ldots, \frac{2\pi}{\lambda_{d,N}})$.
By the  direct calculation we have $|g_{N}(2^{-N}\emph{\textbf{k}})-g_{N}(2^{-N}\emph{\textbf{k}}')|=a_{N}|1-e^{-\textbf{i}\mathcal{K}_{N}\cdot2^{-N}\emph{\textbf{e}}_{j_{0}}}|=
a_{N}|1-e^{-\textbf{i}\frac{2^{-N+1}\pi}{\lambda_{j_{0},N}}}|$,
$|g_{N}(2^{-N}\emph{\textbf{k}})-g_{N}(2^{-N}\emph{\textbf{k}}'')|=a_{N}|1-e^{\textbf{i}\frac{2^{-N+1}\pi}{\lambda_{j_{0},N}}}|$.
Since $\frac{2^{-N}}{\lambda_{j_{0},N}}\notin \mathbb{Z}$, Definition \ref{addefinition} (i) holds.
Moreover,
\begin{align}\label{1234}\begin{array}{lll}
\displaystyle\frac{\max\{|g_{N}(2^{-N}\emph{\textbf{k}})-g_{N}(2^{-N}\emph{\textbf{k}}')|, |\overline{g_{N}}(2^{-N}\emph{\textbf{k}})-\overline{g_{N}}(2^{-N}\emph{\textbf{k}}'')|\}}{|\Im[(g_{N}(2^{-N}\emph{\textbf{k}})-g_{N}(2^{-N}\emph{\textbf{k}}^{'}))(\overline{g_{N}}(2^{-N}\emph{\textbf{k}})-\overline{g_{N}}(2^{-N}\emph{\textbf{k}}^{''}))]|}\\
\displaystyle=\frac{\max\{|1-e^{\textbf{i}\mathcal{K}_{N}\cdot2^{-N}(\emph{\textbf{k}}-\emph{\textbf{k}}')}|, |1-e^{-\textbf{i}\mathcal{K}_{N}\cdot2^{-N}(\emph{\textbf{k}}-\emph{\textbf{k}}'')}|\}}{a_{N}|\Im[(1-e^{\textbf{i}\mathcal{K}_{N}\cdot2^{-N}(\emph{\textbf{k}}-\emph{\textbf{k}}')})(1-e^{-\textbf{i}\mathcal{K}_{N}\cdot2^{-N}(\emph{\textbf{k}}-\emph{\textbf{k}}'')})]|} \\
\displaystyle\leq
\frac{2}{a_{N}|\Im[(1-e^{\textbf{i}\mathcal{K}_{N}\cdot2^{-N}(\emph{\textbf{k}}-\emph{\textbf{k}}')})(1-e^{-\textbf{i}\mathcal{K}_{N}\cdot2^{-N}(\emph{\textbf{k}}-\emph{\textbf{k}}'')})]|}\\
\displaystyle=
\frac{1}{2a_{N}|\sin(2^{-N}\frac{2\pi}{\lambda_{j_{0},N}})\sin^{2}(2^{-N-1}\frac{2\pi}{\lambda_{j_{0},N}})|}\quad (\ref{1234}A)\\
\leq\gamma, \quad (\ref{1234}B)
\end{array}\end{align}
where $ (\ref{1234}A)$
and $(\ref{1234}B)$ are  from \eqref{bvxcv} and \eqref{KKK2345678}, respectively.
This means that  every   $g_{N}$ is $\gamma$-admissible on $\Xi_{\Omega,N}$ w.r.t  level $N$.
Note that $|g_{N}(\emph{\textbf{x}})|\equiv a_{N}$ for any $\emph{\textbf{x}}\in \mathbb{R}^{d}$.
If \eqref{KKK2345678} holds for every $N$ and $\sup_{k\geq1} a_{k}<\infty$, then
$\{g_{N}\}^{\infty}_{N=1}$
is a sequence of uniformly $\gamma$-admissible plane waves on $\{\Xi_{\Omega,N}\}^{\infty}_{N=1}$.
\end{proof}

\begin{note}\label{yilaixing}
By Definition \ref{addefinition}, the admissibility  depends on $\Xi_{\Omega,N}$.
Since  $\Xi_{\Omega,N}$ in \eqref{bvxcv} is   related to the coordinate $j_{0}$,
the admissibility of $g_{N}$ is naturally related to $j_{0}.$ Moreover, it follows from
$(\ref{1234}B)$ that the choice of $j_{0}$ probably affects  the admissibility exponent $\gamma$.
\end{note}

\subsubsection{A class of admissible spherical waves}
As a counterpart of the plane reference  wave case in subsection \ref{pmianboqingxing}, this subsection is to design a class of sequences of  uniformly admissible spherical reference waves $\{g_{N}\}^{\infty}_{N=1}$ for Scheme \ref{xinsuanfa},
where $g_{N}(\emph{\textbf{x}})=\frac{a_{N}}{\|\emph{\textbf{x}}\|_{2}}e^{\textbf{i}\nu_{N}\|\emph{\textbf{x}}\|_{2}}$.
The first issue  is to  design   $\Xi_{\Omega,N}$ in  Definition  \ref{addefinition}.
Since  $g_{N}$ is not  defined at $\textbf{0}$,
$\Xi_{\Omega,N}$ designed  via \eqref{bvxcv} for the plane wave case  is not necessarily well-defined for the spherical wave case.
Note that  $g_{N}=\frac{a_{N}}{\|\emph{\textbf{x}}\|_{2}}e^{\textbf{i}\nu_{N}\|\emph{\textbf{x}}\|_{2}}$
is essentially a radial function on $\mathbb{R}^d\setminus\{\textbf{0}\}.$
Motivated by this, for any $\emph{\textbf{k}}\in  \Lambda_{\Omega,N}$ we  set $\emph{\textbf{k}}'=(1+2^{-N})\emph{\textbf{k}}$
and $\emph{\textbf{k}}''=(1-2^{-N})\emph{\textbf{k}}$ such that
\begin{align}\label{lingwaijihehewenjie}
\Xi_{\Omega,N}=\{(\emph{\textbf{k}}, \emph{\textbf{k}}', \emph{\textbf{k}}''): \emph{\textbf{k}}\in  \Lambda_{\Omega,N},
\emph{\textbf{k}}'=(1+2^{-N})\emph{\textbf{k}}, \emph{\textbf{k}}''=(1-2^{-N})\emph{\textbf{k}}\}.
\end{align}
Clearly, by the definition of a spherical wave and the definitions of the  intensities  $I_{N, \emph{\textbf{k}}}, I_{N, \emph{\textbf{k}}^{'}}, I_{N, \emph{\textbf{k}}^{''}}$ in \eqref{intensity123} and  \eqref{1BXCZSDF},   it is required that $\emph{\textbf{k}}, \emph{\textbf{k}}', \emph{\textbf{k}}''\neq \textbf{0}$.
If $\textbf{0}\notin\Lambda_{\Omega,N}$ then $\Xi_{\Omega,N}$ designed  in  \eqref{lingwaijihehewenjie} naturally  satisfies such a requirement.
Moreover,
the following implies  that the analysis of the admissibility of  $g_{N}$ can be simplified by  such a design  of  $\Xi_{\Omega,N}$.

 \begin{prop}\label{KKKDFG}
 As previously, suppose that the spherical reference wave $g_{N}(\emph{\textbf{x}})=\frac{a_{N}}{\|\emph{\textbf{x}}\|_{2}}e^{\textbf{i}\nu_{N}\|\emph{\textbf{x}}\|_{2}}$
 such that $a_{N}>0.$ The ROI is denoted by   $\Omega$ such that the associated
 $\Lambda_{\Omega,N}\subseteq \mathbb{Z}^{d}$ defined in \eqref{fft1} does not contain $\textbf{0}$ for every level $N$.
 Specifically,
  \begin{align}\label{fft89}\begin{array}{lll}
 \Lambda_{\Omega,N}=\{(k_{1},\ldots,  k_{d})\in \mathbb{Z}^{d}: 2^{N}L_{l,\min}-M_{l}\leq k_{l}\leq2^{N}L_{l,\max}, l=1, \ldots, d\},
\end{array}\end{align}
where every $M_{l}>0$,  $L_{l,\min}$ and $L_{l,\max}$ are related with $\Omega$ via \eqref{zuixiao}.
Associated with $\Lambda_{\Omega,N}$ the set $\Xi_{\Omega,N}\subseteq \mathbb{R}^{3d}$ is defined  in \eqref{lingwaijihehewenjie}.
Then    for any $(\emph{\textbf{k}}, \emph{\textbf{k}}', \emph{\textbf{k}}'')\in \Xi_{\Omega,N}$
 we have $g_{N}(2^{-N}\emph{\textbf{k}})\neq g_{N}(2^{-N}\emph{\textbf{k}}')$
 and $g_{N}(2^{-N}\emph{\textbf{k}})\neq g_{N}(2^{-N}\emph{\textbf{k}}'')$.
Moreover,
 \begin{align}\label{123p45tgy}\begin{array}{lll}
\displaystyle\frac{\max\{|g_{N}(2^{-N}\emph{\textbf{k}})-g_{N}(2^{-N}\emph{\textbf{k}}')|, |\overline{g_{N}}(2^{-N}\emph{\textbf{k}})-\overline{g_{N}}(2^{-N}\emph{\textbf{k}}'')|\}}{|\Im[(g_{N}(2^{-N}\emph{\textbf{k}})-g_{N}(2^{-N}\emph{\textbf{k}}^{'}))(\overline{g_{N}}(2^{-N}\emph{\textbf{k}})-\overline{g_{N}}(2^{-N}\emph{\textbf{k}}^{''}))]|}\\
\displaystyle\leq
\frac{9\sqrt{d}(2^{N}\|\Omega\|_{2,\sup}+M)}{ 2^{N+3}a_{N}|\sin(2^{-2N}\nu_{N}\|\emph{\textbf{k}}\|_{2})|\sin^{2}(2^{-2N-1}\nu_{N}\|\emph{\textbf{k}}\|_{2})},
\end{array}
\end{align}
where
$\|\Omega\|_{2,\sup}=\sup\{\|\emph{\textbf{x}}\|_{2}: \emph{\textbf{x}}\in \Omega\}$
and $M=\max\{M_{1}, \ldots, M_{d}\}$.
 \end{prop}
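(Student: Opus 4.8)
The plan is to reduce everything to a one-variable (radial) computation, exploiting that $2^{-N}\emph{\textbf{k}}$, $2^{-N}\emph{\textbf{k}}'=(1+2^{-N})\,2^{-N}\emph{\textbf{k}}$ and $2^{-N}\emph{\textbf{k}}''=(1-2^{-N})\,2^{-N}\emph{\textbf{k}}$ are collinear with the origin, at radii $r$, $(1+2^{-N})r$, $(1-2^{-N})r$, where $r:=\|2^{-N}\emph{\textbf{k}}\|_{2}=2^{-N}\|\emph{\textbf{k}}\|_{2}>0$; positivity uses $\textbf{0}\notin\Lambda_{\Omega,N}$, which (together with $1\pm2^{-N}\neq0$ for $N\geq1$) also guarantees that $g_{N}$ is defined at all three points. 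Writing $\phi=\nu_{N}r$ and $\theta=2^{-N}\nu_{N}r=2^{-2N}\nu_{N}\|\emph{\textbf{k}}\|_{2}$, one has $g_{N}(2^{-N}\emph{\textbf{k}})=Ae^{\textbf{i}\phi}$, $g_{N}(2^{-N}\emph{\textbf{k}}')=Be^{\textbf{i}(\phi+\theta)}$, $g_{N}(2^{-N}\emph{\textbf{k}}'')=Ce^{\textbf{i}(\phi-\theta)}$ with $A=a_{N}/r$, $B=a_{N}/((1+2^{-N})r)$, $C=a_{N}/((1-2^{-N})r)$, so $B<A<C$; the first assertion of the proposition is then immediate from the distinctness of these moduli.

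For the denominator I would first cancel the uncontrolled global phase: $g_{N}(2^{-N}\emph{\textbf{k}})-g_{N}(2^{-N}\emph{\textbf{k}}')=e^{\textbf{i}\phi}(A-Be^{\textbf{i}\theta})$ and $\overline{g_{N}}(2^{-N}\emph{\textbf{k}})-\overline{g_{N}}(2^{-N}\emph{\textbf{k}}'')=e^{-\textbf{i}\phi}(A-Ce^{\textbf{i}\theta})$, so the bracket whose imaginary part defines $\mu_{N;\emph{\textbf{k}},\emph{\textbf{k}}',\emph{\textbf{k}}''}$ in \eqref{cba1} equals $(A-Be^{\textbf{i}\theta})(A-Ce^{\textbf{i}\theta})=A^{2}-A(B+C)e^{\textbf{i}\theta}+BCe^{2\textbf{i}\theta}$, with imaginary part $\sin\theta\,(2BC\cos\theta-A(B+C))$. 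The key identity is $2BC=A(B+C)=2a_{N}^{2}/((1-2^{-2N})r^{2})$, which holds precisely because the two neighbouring radii average to $r$; it reduces the second factor to $A(B+C)(\cos\theta-1)=-4a_{N}^{2}\sin^{2}(\theta/2)/((1-2^{-2N})r^{2})$, whence $|\mu_{N;\emph{\textbf{k}},\emph{\textbf{k}}',\emph{\textbf{k}}''}|=4a_{N}^{2}|\sin\theta|\sin^{2}(\theta/2)/((1-2^{-2N})r^{2})\geq 4a_{N}^{2}|\sin\theta|\sin^{2}(\theta/2)/r^{2}$.

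For the numerator I would use $|A-Ce^{\textbf{i}\theta}|^{2}=(A-C)^{2}+4AC\sin^{2}(\theta/2)$ with $A-C=-a_{N}2^{-N}/((1-2^{-N})r)$ and $AC=a_{N}^{2}/((1-2^{-N})r^{2})$, and estimate crudely using $1-2^{-N}\geq1/2$, $2^{-2N}\leq1/4$ and $\sin^{2}(\theta/2)\leq1$ (all for $N\geq1$) to get $|A-Ce^{\textbf{i}\theta}|\leq 3a_{N}/r$; the corresponding bound for $|A-Be^{\textbf{i}\theta}|$ is smaller, so the numerator of \eqref{123p45tgy} is $\leq 3a_{N}/r$. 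Dividing, the left-hand side of \eqref{123p45tgy} is at most $3r/(4a_{N}|\sin\theta|\sin^{2}(\theta/2))$. It remains to bound $r$: each coordinate of $\emph{\textbf{k}}\in\Lambda_{\Omega,N}$ lies in $[2^{N}L_{l,\min}-M_{l},\,2^{N}L_{l,\max}]$, and $|L_{l,\min}|,|L_{l,\max}|\leq\|\Omega\|_{2,\sup}$ since these are coordinates of integer points of $\Omega$, so $\|\emph{\textbf{k}}\|_{2}\leq\sqrt{d}(2^{N}\|\Omega\|_{2,\sup}+M)$ and $r\leq 2^{-N}\sqrt{d}(2^{N}\|\Omega\|_{2,\sup}+M)$; hence $3r/4\leq 6\sqrt{d}(2^{N}\|\Omega\|_{2,\sup}+M)/2^{N+3}$, which is dominated by the stated $9\sqrt{d}(2^{N}\|\Omega\|_{2,\sup}+M)/2^{N+3}$ once one recalls $\theta=2^{-2N}\nu_{N}\|\emph{\textbf{k}}\|_{2}$ and $\theta/2=2^{-2N-1}\nu_{N}\|\emph{\textbf{k}}\|_{2}$, giving \eqref{123p45tgy}.

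The only non-routine step is the identity $2BC=A(B+C)$: it is what makes the denominator collapse to the same elementary $|\sin\theta|\sin^{2}(\theta/2)$ form as in the plane-wave Theorem \ref{rongxutiaojianpingm}, and it uses in an essential way that $\emph{\textbf{k}}'$ and $\emph{\textbf{k}}''$ are placed symmetrically in radius about $\emph{\textbf{k}}$ — precisely the reason for the choice $\emph{\textbf{k}}'=(1+2^{-N})\emph{\textbf{k}}$, $\emph{\textbf{k}}''=(1-2^{-N})\emph{\textbf{k}}$. Everything else is elementary estimation; the degenerate case $\sin\theta\,\sin^{2}(\theta/2)=0$, in which both sides of \eqref{123p45tgy} are $+\infty$, can be dismissed in one sentence.
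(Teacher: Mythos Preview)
Your proposal is correct and follows essentially the same route as the paper's proof: both factor out the global phase, compute the imaginary part of $(A-Be^{\textbf{i}\theta})(A-Ce^{\textbf{i}\theta})$ (the paper writes this as $\frac{2^{2N}a_N^2}{\|\emph{\textbf{k}}\|_2^2}(1-\tfrac{1}{1+2^{-N}}e^{\textbf{i}\theta})(1-\tfrac{1}{1-2^{-N}}e^{\textbf{i}\theta})$), reduce it to $|\sin\theta|\sin^2(\theta/2)$ times a rational factor, bound the numerator by $3a_N/r$, and finish with the coordinate bound $\|\emph{\textbf{k}}\|_2\leq\sqrt{d}(2^N\|\Omega\|_{2,\sup}+M)$. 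Your explicit identification of the identity $2BC=A(B+C)$ is exactly the paper's implicit observation $u+v=2uv$ for $u=(1+2^{-N})^{-1}$, $v=(1-2^{-N})^{-1}$; your denominator bound $|\mu|\geq 4a_N^2|\sin\theta|\sin^2(\theta/2)/r^2$ is in fact a bit sharper than the paper's (which throws in an extra factor $2/3$), so you obtain the constant $6$ before relaxing to the stated $9$.
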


\begin{proof}
For any $(\emph{\textbf{k}}, \emph{\textbf{k}}', \emph{\textbf{k}}'')\in \Xi_{\Omega,N}$,
it follows from  \eqref{lingwaijihehewenjie} and $\textbf{0}\notin\Lambda_{\Omega, N}$  that    $\|\emph{\textbf{k}}\|_{2}>0.$
Clearly, $|g_{N}(2^{-N}\emph{\textbf{k}}')|=\frac{1}{1+2^{-N}}|g_{N}(2^{-N}\emph{\textbf{k}})|\neq0$
and $|g_{N}(2^{-N}\emph{\textbf{k}}'')|=\frac{1}{1-2^{-N}}|g_{N}(2^{-N}\emph{\textbf{k}})|\neq0$.
Then the first part is  true.

 By the  direct calculation we have
\begin{align} \label{upp12ff34}\begin{array}{lll}
|\Im[(g_{N}(2^{-N}\emph{\textbf{k}})-g_{N}(2^{-N}\emph{\textbf{k}}^{'}))(\overline{g_{N}}(2^{-N}\emph{\textbf{k}})-\overline{g_{N}}(2^{-N}\emph{\textbf{k}}^{''}))]|\\
=|\Im\big[\big(g_{N}(2^{-N}\emph{\textbf{k}})-g_{N}(2^{-N}(1+2^{-N})\emph{\textbf{k}})\big)\big(\overline{g_{N}}(2^{-N}\emph{\textbf{k}})-\overline{g_{N}}(2^{-N}(1-2^{-N})\emph{\textbf{k}})\big)\big]|\\
=\frac{2^{2N}a^{2}_{N}}{\|\emph{\textbf{k}}\|_{2}^{2}}|\Im\big[(1-\frac{1}{1+2^{-N}}e^{\textbf{i}2^{-2N}\nu_{N}\|\emph{\textbf{k}}\|_{2}})(1-\frac{1}{1-2^{-N}}e^{\textbf{i}2^{-2N}\nu_{N}\|\emph{\textbf{k}}\|_{2}})\big]|\\
=\frac{2^{2N}a^{2}_{N}}{\|\emph{\textbf{k}}\|_{2}^{2}}|[-\frac{2\sin(2^{-2N}\nu_{N}\|\emph{\textbf{k}}\|_{2})}{(1-2^{-N})(1+2^{-N})}+\frac{\sin(2^{-2N+1}\nu_{N}\|\emph{\textbf{k}}\|_{2})}{(1-2^{-N})(1+2^{-N})}]|\\
=\frac{2^{2N+2}a^{2}_{N}}{\|\emph{\textbf{k}}\|_{2}^{2}}\frac{1}{(1-2^{-N})(1+2^{-N})}|\sin(2^{-2N}\nu_{N}\|\emph{\textbf{k}}\|_{2})|\sin^{2}(2^{-2N-1}\nu_{N}\|\emph{\textbf{k}}\|_{2})\\
\geq \frac{2^{2N+3}a^{2}_{N}}{3\|\emph{\textbf{k}}\|_{2}^{2}}|\sin(2^{-2N}\nu_{N}\|\emph{\textbf{k}}\|_{2})|\sin^{2}(2^{-2N-1}\nu_{N}\|\emph{\textbf{k}}\|_{2}), \quad (\ref{upp12ff34}A)
\end{array}\end{align}
where $(\ref{upp12ff34}A)$ is from $0<2^{-N}\leq1/2$ for $N\geq1$.
On the other hand,
 \begin{align} \label{hhhhb0}\begin{array}{lll}
 |g_{N}(2^{-N}\emph{\textbf{k}})-g_{N}(2^{-N}\emph{\textbf{k}}^{'})|\\
=|g_{N}(2^{-N}\emph{\textbf{k}})-g_{N}(2^{-N}(1+2^{-N})\emph{\textbf{k}}))|\\
=\frac{2^{N}a_{N}}{\|\emph{\textbf{k}}\|_{2}}|1-\frac{1}{1+2^{-N}}e^{\textbf{i}2^{-2N}\nu_{N}\|\emph{\textbf{k}}\|_{2}}|\\
\leq\frac{2^{N+1}a_{N}}{\|\emph{\textbf{k}}\|_{2}}
\end{array}\end{align}
and
 \begin{align} \label{hhhhb1}\begin{array}{lll}
 |\overline{g_{N}}(2^{-N}\emph{\textbf{k}})-\overline{g_{N}}(2^{-N}\emph{\textbf{k}}^{''})|\\
=|\overline{g_{N}}(2^{-N}\emph{\textbf{k}})-\overline{g_{N}}(2^{-N}(1-2^{-N})\emph{\textbf{k}}))|\\
=\frac{2^{N}a_{N}}{\|\emph{\textbf{k}}\|_{2}}|1-\frac{1}{1-2^{-N}}e^{\textbf{i}2^{-2N}\nu_{N}\|\emph{\textbf{k}}\|_{2}}|\\
\leq\frac{2^{N}3a_{N}}{\|\emph{\textbf{k}}\|_{2}}.
\end{array}\end{align}
Then it follows from \eqref{hhhhb0} and \eqref{hhhhb1} that
\begin{align}\label{liangliang}
\max\{|g_{N}(2^{-N}\emph{\textbf{k}})-g_{N}(2^{-N}\emph{\textbf{k}}')|, |\overline{g_{N}}(2^{-N}\emph{\textbf{k}})-\overline{g_{N}}(2^{-N}\emph{\textbf{k}}'')|\}\leq\frac{2^{N}3a_{N}}{\|\emph{\textbf{k}}\|_{2}}.
\end{align}
Combing \eqref{upp12ff34} and \eqref{liangliang} we have
\begin{align}\label{123p45gbvx}\begin{array}{lll}
\displaystyle\frac{\max\{|g_{N}(2^{-N}\emph{\textbf{k}})-g_{N}(2^{-N}\emph{\textbf{k}}')|, |\overline{g_{N}}(2^{-N}\emph{\textbf{k}})-\overline{g_{N}}(2^{-N}\emph{\textbf{k}}'')|\}}{|\Im[(g_{N}(2^{-N}\emph{\textbf{k}})-g_{N}(2^{-N}\emph{\textbf{k}}^{'}))(\overline{g_{N}}(2^{-N}\emph{\textbf{k}})-\overline{g_{N}}(2^{-N}\emph{\textbf{k}}^{''}))]|}\\
\displaystyle\leq\frac{\frac{2^{N}3a_{N}}{\|\emph{\textbf{k}}\|_{2}}}{\frac{2^{2N+3}a^{2}_{N}}{3\|\emph{\textbf{k}}\|_{2}^{2}}|\sin(2^{-2N}\nu_{N}\|\emph{\textbf{k}}\|_{2})|\sin^{2}(2^{-2N-1}\nu_{N}\|\emph{\textbf{k}}\|_{2})}\\
\displaystyle=\frac{9\|\emph{\textbf{k}}\|_{2}}{ 2^{N+3}a_{N}|\sin(2^{-2N}\nu_{N}\|\emph{\textbf{k}}\|_{2})|\sin^{2}(2^{-2N-1}\nu_{N}\|\emph{\textbf{k}}\|_{2})}.
\end{array}\end{align}
On the other hand,
it follows from \eqref{fft89} that
\begin{align}\begin{array}{lll}
k_{l}&\leq 2^{N}\max\{|L_{l,\max}|, |L_{l,\min}|\}+M_{l}\\
&\leq2^{N}\|\Omega\|_{2,\sup}+M.
\end{array}\end{align}
Then \begin{align}\label{qingyise}\|\emph{\textbf{k}}\|_{2}\leq\sqrt{d}(2^{N}\|\Omega\|_{2,\sup}+M).\end{align}
Combining \eqref{qingyise} and \eqref{123p45gbvx} we have
\begin{align}\label{67y23p45tgy}\begin{array}{lll}
\displaystyle\frac{\max\{|g_{N}(2^{-N}\emph{\textbf{k}})-g_{N}(2^{-N}\emph{\textbf{k}}')|, |\overline{g_{N}}(2^{-N}\emph{\textbf{k}})-\overline{g_{N}}(2^{-N}\emph{\textbf{k}}'')|\}}{|\Im[(g_{N}(2^{-N}\emph{\textbf{k}})-g_{N}(2^{-N}\emph{\textbf{k}}^{'}))(\overline{g_{N}}(2^{-N}\emph{\textbf{k}})-\overline{g_{N}}(2^{-N}\emph{\textbf{k}}^{''}))]|}\\
\displaystyle\leq
\frac{9\sqrt{d}(2^{N}\|\Omega\|_{2,\sup}+M)}{ 2^{N+3}a_{N}|\sin(2^{-2N}\nu_{N}\|\emph{\textbf{k}}\|_{2})|\sin^{2}(2^{-2N-1}\nu_{N}\|\emph{\textbf{k}}\|_{2})}.
\end{array}
\end{align}
This completes the proof.
\end{proof}
Based on Proposition \ref{KKKDFG}, we next design a class of sequences of uniformly  admissible waves.

\begin{theo}\label{P456789}
Associated with the ROI $\Omega$, for every $N\geq1$  the two  sets  $\Lambda_{\Omega, N}\subseteq \mathbb{R}^{d}$
and $\Xi_{\Omega,N}\subseteq \mathbb{R}^{3d}$ are  as  in  \eqref{fft89} and \eqref{lingwaijihehewenjie}, respectively. Particularly,
 \begin{align}\label{234fft89}\begin{array}{lll}
 \Lambda_{\Omega,N}=\{(k_{1},\ldots,  k_{d})\in \mathbb{Z}^{d}: 2^{N}L_{l,\min}-M_{l}\leq k_{l}\leq2^{N}L_{l,\max}, l=1, \ldots, d\}
\end{array}\end{align}
satisfies  $\textbf{0}\notin \Lambda_{\Omega,N}.$
%
%
%
Suppose that the wave number and amplitude sequences  $\{\nu_{N}: N\geq1\}$ and $\{a_{N}: N\geq1\}$ satisfies
the following three  items:\\
$ \hbox{(i)} \ 0<\alpha\leq2^{-N}|\nu_{N}|q_{l_{0}}\leq
2^{-2N}|\nu_{N}|\sqrt{d}(2^{N}\|\Omega\|_{2,\sup}+M)\leq\frac{\pi}{2},$\\
$\hbox{(ii)} \
\frac{9\sqrt{d}(2^{N}\|\Omega\|_{2,\sup}+M)}{ 2^{N+3}a_{N}}\leq\beta<0,
$
\\
$\hbox{(iii)} \sup_{N\geq1}a_{N}<\infty$,
\\
where the constants  $\alpha$ and $\beta$ are independent of $N$,  $q_{l_{0}}=\min\{|L_{l_{0},\min}-2^{-1}M_{l_{0}}|, |L_{l_{0},\max}|\}$
$>0$,
$\|\Omega\|_{2,\sup}=\sup\{\|\emph{\textbf{x}}\|_{2}: \emph{\textbf{x}}\in \Omega\}$
and $M=\max\{M_{1}, \ldots, M_{d}\}$.
Then  the  sequence of spherical waves  $\{\frac{a_{N}}{\|\emph{\textbf{x}}\|_{2}}e^{\textbf{i}\nu_{N}\|\emph{\textbf{x}}\|_{2}}\}^{\infty}_{N=1}$
is uniformly $\frac{\beta}{\sin{\alpha}\sin^{2}{(\alpha/2)}}$-admissible on $\{\Xi_{\Omega,N}\}^{\infty}_{N=1}$.
\end{theo}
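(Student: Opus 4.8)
The plan is to feed the explicit estimate \eqref{123p45tgy} of Proposition \ref{KKKDFG} into the admissibility condition \eqref{p45tgy}, after first confining the argument $2^{-2N}\nu_{N}\|\mathbf{k}\|_{2}$ of the sine factors to an interval of the form $[\alpha,\pi/2]$ on which $|\sin|$ is increasing and bounded away from $0$. Since the hypothesis guarantees $\mathbf{0}\notin\Lambda_{\Omega,N}$ for every $N\geq1$, Proposition \ref{dbdn} supplies the index $l_{0}$ together with the lower bound $\|2^{-N}\mathbf{k}\|_{2}\geq q_{l_{0}}:=\min\{|L_{l_{0},\min}-2^{-1}M_{l_{0}}|,|L_{l_{0},\max}|\}>0$, hence $\|\mathbf{k}\|_{2}\geq 2^{N}q_{l_{0}}$, for every $\mathbf{k}\in\Lambda_{\Omega,N}$; on the other hand \eqref{qingyise} gives the matching upper bound $\|\mathbf{k}\|_{2}\leq\sqrt{d}\,(2^{N}\|\Omega\|_{2,\sup}+M)$.

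Multiplying these two estimates by $2^{-2N}|\nu_{N}|$ and invoking hypothesis (i) yields the chain
\begin{align}\notag
\alpha\ \leq\ 2^{-N}|\nu_{N}|q_{l_{0}}\ \leq\ 2^{-2N}|\nu_{N}|\|\mathbf{k}\|_{2}\ \leq\ 2^{-2N}|\nu_{N}|\sqrt{d}\,(2^{N}\|\Omega\|_{2,\sup}+M)\ \leq\ \frac{\pi}{2}.
\end{align}
Because $\sin$ is nonnegative and increasing on $[0,\pi/2]$, this forces $|\sin(2^{-2N}\nu_{N}\|\mathbf{k}\|_{2})|=\sin(2^{-2N}|\nu_{N}|\|\mathbf{k}\|_{2})\geq\sin\alpha$, and, since the halved argument $2^{-2N-1}|\nu_{N}|\|\mathbf{k}\|_{2}$ then lies in $[\alpha/2,\pi/4]$, also $\sin^{2}(2^{-2N-1}\nu_{N}\|\mathbf{k}\|_{2})\geq\sin^{2}(\alpha/2)$.

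Next I would invoke Proposition \ref{KKKDFG} itself: its first assertion already gives Definition \ref{addefinition}(i) (that $g_{N}(2^{-N}\mathbf{k})$ differs from $g_{N}(2^{-N}\mathbf{k}')$ and from $g_{N}(2^{-N}\mathbf{k}'')$), while substituting the two sine lower bounds just obtained into \eqref{123p45tgy} bounds the admissibility quotient above by $\frac{9\sqrt{d}(2^{N}\|\Omega\|_{2,\sup}+M)}{2^{N+3}a_{N}\sin\alpha\,\sin^{2}(\alpha/2)}$, which by hypothesis (ii) is at most $\frac{\beta}{\sin\alpha\,\sin^{2}(\alpha/2)}$, uniformly over $(\mathbf{k},\mathbf{k}',\mathbf{k}'')\in\Xi_{\Omega,N}$ and over $N\geq1$. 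Thus each $g_{N}$ is $\gamma$-admissible on $\Xi_{\Omega,N}$ with $\gamma:=\beta/(\sin\alpha\,\sin^{2}(\alpha/2))$.

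To upgrade this to uniform admissibility in the sense of Definition \ref{yizhirongxuxing}, it remains to verify the boundedness condition \eqref{PLKCV}. Using $\|2^{-N}\mathbf{k}\|_{2}\geq q_{l_{0}}$ once more, together with $\mathbf{k}'=(1+2^{-N})\mathbf{k}$, $\mathbf{k}''=(1-2^{-N})\mathbf{k}$ and $1-2^{-N}\geq1/2$ for $N\geq1$, one gets $|g_{N}(2^{-N}\mathbf{k})|\leq a_{N}/q_{l_{0}}$, $|g_{N}(2^{-N}\mathbf{k}')|\leq a_{N}/q_{l_{0}}$ and $|g_{N}(2^{-N}\mathbf{k}'')|\leq 2a_{N}/q_{l_{0}}$, so hypothesis (iii) bounds the left side of \eqref{PLKCV} by $2\sup_{N\geq1}a_{N}/q_{l_{0}}<\infty$; combining this with the previous paragraph finishes the proof. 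The only genuinely delicate point is the two-sided control of $2^{-2N}|\nu_{N}|\|\mathbf{k}\|_{2}$: the lower bound must go through the ``worst'' coordinate $l_{0}$ furnished by Proposition \ref{dbdn}, while the upper bound must be uniform in $N$, and hypothesis (i) is phrased precisely so that both hold simultaneously with the same constant $\alpha$ and no asymptotic mismatch as $N\to\infty$.
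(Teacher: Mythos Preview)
Your proof is correct and follows essentially the same route as the paper: both confine $2^{-2N}|\nu_N|\,\|\mathbf{k}\|_2$ to $[\alpha,\pi/2]$ via Proposition \ref{dbdn} and \eqref{qingyise}, then feed the resulting sine lower bounds into Proposition \ref{KKKDFG} together with hypothesis (ii). You are in fact more explicit than the paper in verifying the uniform boundedness condition \eqref{PLKCV}, which the paper dispatches in a single clause citing item (iii).
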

\begin{proof}
For any $(\emph{\textbf{k}}, \emph{\textbf{k}}', \emph{\textbf{k}}'')\in
\Xi_{\Omega,N}$, it follows from
Proposition \ref{KKKDFG} that Definition \ref{addefinition} (i) holds.
It follows from Proposition \ref{dbdn} and  \eqref{qingyise}
 that $0<q_{l_{0}} \leq 2^{-N}\sqrt{d}(2^{N}\|\Omega\|_{2,\sup}+M)$.
 Therefore, item (i) makes sense.
%
Now  it follows from \eqref{qingyise} and item (i) that
 \begin{align}\label{KBVCXZZZ}0<\alpha\leq2^{-N}|\nu_{N}|q_{l_{0}}\leq 2^{-N}|\nu_{N}|\|2^{-N}\emph{\textbf{k}}\|_{2}
\leq 2^{-2N}|\nu_{N}|\sqrt{d}(2^{N}\|\Omega\|_{2,\sup}+M)\leq \frac{\pi}{2},\end{align}
where $\emph{\textbf{k}}\in \Lambda_{\Omega, N}$. Then
\begin{align} \label{huupp12ff34}\begin{array}{lll}
|\Im[(g_{N}(2^{-N}\emph{\textbf{k}})-g_{N}(2^{-N}\emph{\textbf{k}}^{'}))(\overline{g_{N}}(2^{-N}\emph{\textbf{k}})-\overline{g_{N}}(2^{-N}\emph{\textbf{k}}^{''}))]|\\
\geq \frac{2^{2N+3}a^{2}_{N}}{3\|\emph{\textbf{k}}\|_{2}^{2}}|\sin(2^{-N}\nu_{N}\|2^{-N}\emph{\textbf{k}}\|_{2})|\sin^{2}(2^{-N-1}\nu_{N}\|2^{-N}\emph{\textbf{k}}\|_{2}) \quad (\ref{huupp12ff34}A)\\
\geq\frac{2^{2N+3}a^{2}_{N}}{3\|\emph{\textbf{k}}\|_{2}^{2}}\sin{\alpha}\sin^{2}{(\alpha/2)} \quad (\ref{huupp12ff34}B)\\
>0, \quad (\ref{huupp12ff34}C)
\end{array}\end{align}
where $(\ref{huupp12ff34}A)$, $(\ref{huupp12ff34}B)$ and $(\ref{huupp12ff34}C)$ are  from  $(\ref{upp12ff34}A)$, \eqref{KBVCXZZZ}
and $\emph{\textbf{k}}\neq0,$ respectively.
Consequently,
\begin{align}\label{1267y23p45tgy}\begin{array}{lll}
\displaystyle\frac{\max\{|g_{N}(2^{-N}\emph{\textbf{k}})-g_{N}(2^{-N}\emph{\textbf{k}}')|, |\overline{g_{N}}(2^{-N}\emph{\textbf{k}})-\overline{g_{N}}(2^{-N}\emph{\textbf{k}}'')|\}}{|\Im[(g_{N}(2^{-N}\emph{\textbf{k}})-g_{N}(2^{-N}\emph{\textbf{k}}^{'}))(\overline{g_{N}}(2^{-N}\emph{\textbf{k}})-\overline{g_{N}}(2^{-N}\emph{\textbf{k}}^{''}))]|}\\
\displaystyle\leq
\frac{9\sqrt{d}(2^{N}\|\Omega\|_{2,\sup}+M)}{ 2^{N+3}a_{N}|\sin(2^{-2N}\nu_{N}\|\emph{\textbf{k}}\|_{2})|\sin^{2}(2^{-2N-1}\nu_{N}\|\emph{\textbf{k}}\|_{2})}\quad (\ref{1267y23p45tgy} A)\\
\displaystyle\leq \frac{\beta}{\sin{\alpha}\sin^{2}{(\alpha/2)}}, \quad (\ref{1267y23p45tgy} B)
\end{array}
\end{align}
where $(\ref{1267y23p45tgy} A)$ is  from \eqref{123p45tgy},  and  $(\ref{1267y23p45tgy} B)$ is from \eqref{huupp12ff34} and item  (ii).
Now the proof is completed by $(\ref{1267y23p45tgy} B)$ and item  (iii).
\end{proof}

\begin{exam}\label{KSDF}
In Theorem \ref{P456789}, if  $\nu_{N}$ and $a_{N}$ are chosen as $2^{N}\epsilon$ ($\epsilon>0$) and $1$
then item (iii) holds, and  the terms  $2^{-N}\nu_{N}q_{l_{0}}=\epsilon q_{l_{0}}$ and $\frac{9\sqrt{d}(2^{N}\|\Omega\|_{2,\sup}+M)}{ 2^{N+3}a_{N}}=\frac{9}{8}\sqrt{d}(
\|\Omega\|_{2,\sup}+2^{-N}M)\leq \frac{9}{8}\sqrt{d}(
\|\Omega\|_{2,\sup}+2^{-1}M):=\beta$. Choose appropriate $\epsilon$ and $\alpha$
such that $0<\alpha\leq\epsilon q_{l_{0}}\leq\epsilon\sqrt{d}(
\|\Omega\|_{2,\sup}+2^{-1}M)\leq\pi/2.$ Then it follows from Theorem \ref{P456789}
that $\{\frac{e^{\textbf{i}\tiny{2^{N}}\epsilon\|\emph{\textbf{x}}\|_{2}}}{\|\emph{\textbf{x}}\|_{2}}\}^{\infty}_{N=1}$
is uniformly $\frac{\beta}{\sin{\alpha}\sin^{2}{(\alpha/2)}}$-admissible on $\{\Xi_{\Omega,N}\}^{\infty}_{N=1}$.
\end{exam}

\section{Single-shot interference based     phase retrieval for functions in Sobolev space}\label{zqsd}

\subsection{The first main  result: the  plane reference wave method  for Scheme \ref{xinsuanfa}}\label{planewave}
Based on
 the single-shot  interference  intensities
   in \eqref{yongnaxienengliang}, we will provide    Approach \ref{ppp4} for  recovering the  $N$-level  data $\{f(2^{-N}\emph{\textbf{k}}): \emph{\textbf{k}}\in \Lambda_{\Omega, N}\}$
in Scheme \ref{xinsuanfa} \textbf{step $(1')$}.
Herein the   interference wave $g_{N}$ for Scheme \ref{xinsuanfa}   is chosen as  a  plane reference wave.
As previously,
 \begin{align}\label{12fft6789}\begin{array}{lll}
 \Lambda_{\Omega,N}=\{(k_{1},\ldots,  k_{d})\in \mathbb{Z}^{d}: 2^{N}L_{l,\min}-M_{l}\leq k_{l}\leq2^{N}L_{l,\max}, l=1, \ldots, d\}.
\end{array}\end{align}
 Moreover,  associated with $\Lambda_{\Omega,N}$
the set $\Xi_{\Omega,N}\subseteq \mathbb{R}^{3d}$ is designed in    \eqref{bvxcv}. Particularly,
\begin{align}\label{xdclingwaijihe}
\Xi_{\Omega,N}=\{(\emph{\textbf{k}}, \emph{\textbf{k}}', \emph{\textbf{k}}''): \emph{\textbf{k}}\in  \Lambda_{\Omega,N},
\emph{\textbf{k}}'=\emph{\textbf{k}}+\emph{\textbf{e}}_{j_{0}},
\emph{\textbf{k}}''=\emph{\textbf{k}}-\emph{\textbf{e}}_{j_{0}}\}
\end{align}
where $j_{0}\in\{1, \ldots, d\}$ is fixed.
Correspondingly,
the single-shot  interference  intensity set  $I_{\Xi_{\Omega,N}}$ in   \eqref{yongnaxienengliang} is
\begin{align}\label{kkJJJ}\begin{array}{lllllll} I_{\Xi_{\Omega,N}}=\big\{I_{N, \emph{\textbf{k}}}=|f(2^{-N}\emph{\textbf{k}})+g_{N}(2^{-N}\emph{\textbf{k}})|^{2}, I_{N, \emph{\textbf{k}}'}=|f(2^{-N}\emph{\textbf{k}}')+g_{N}(2^{-N}\emph{\textbf{k}}')|^{2},\\
\quad\quad\quad\quad  I_{N, \emph{\textbf{k}}''}=|f(2^{-N}\emph{\textbf{k}}'')+g_{N}(2^{-N}\emph{\textbf{k}}'')|^{2}
: \emph{\textbf{k}}\in  \Lambda_{\Omega,N},
\emph{\textbf{k}}'=\emph{\textbf{k}}+\emph{\textbf{e}}_{j_{0}},
\emph{\textbf{k}}''=\emph{\textbf{k}}-\emph{\textbf{e}}_{j_{0}}\big\}.
\end{array}\end{align}


\begin{appr}\label{ppp4}
\textbf{Input}:
level $N$, ROI $\Omega$,
the two sets
$ \Lambda_{\Omega,N}$ and $ \Xi_{\Omega,N}$   in \eqref{12fft6789} and \eqref{xdclingwaijihe},
a  plane  reference  wave $g_{N}$ being admissible on $ \Xi_{\Omega,N}$  w.r.t level   $N$,
single-shot interference intensity set
$I_{\Xi_{\Omega,N}}$  in \eqref{kkJJJ}.


\textbf{Step 1}: For any $(\emph{\textbf{k}}, \emph{\textbf{k}}', \emph{\textbf{k}}'')\in \Xi_{\Omega,N}$
such that $\emph{\textbf{k}}\in  \Lambda_{\Omega,N}$, compute $c=\Re(g_{N}(2^{-N}\emph{\textbf{k}})-g_{N}(2^{-N}\emph{\textbf{k}}')),$
$d=\Im(g_{N}(2^{-N}\emph{\textbf{k}})-g_{N}(2^{-N}\emph{\textbf{k}}'))$,  $h=\Re(g_{N}(2^{-N}\emph{\textbf{k}})-g_{N}(2^{-N}\emph{\textbf{k}}''))$
and $e=\Im(g_{N}(2^{-N}\emph{\textbf{k}})-g_{N}(2^{-N}\emph{\textbf{k}}''))$, where $\emph{\textbf{k}}'=\emph{\textbf{k}}+\emph{\textbf{e}}_{j_{0}}$ and $ \emph{\textbf{k}}''=\emph{\textbf{k}}-\emph{\textbf{e}}_{j_{0}}$
with $j_{0}\in \{1, \ldots, d\}$ being as in \eqref{xdclingwaijihe}.

\textbf{Step 2}: Compute
\begin{gather}\label{qiujie255}
\left[\begin{array}{cccccccccc}\mathring{\Re}(f(2^{-N}\emph{\textbf{k}}))\\
\mathring{\Im}(f(2^{-N}\emph{\textbf{k}}))
\end{array}\right]=\frac{1}{2\mu_{N;\emph{\textbf{k}}, \emph{\textbf{k}}', \emph{\textbf{k}}''}}\left[\begin{array}{cccccccccc}
 e &-d \\-h &c\end{array}\right]\left[\begin{array}{cccccccccc}I_{N,\emph{\textbf{k}}}-I_{N, \emph{\textbf{k}}'}\\
I_{N,\emph{\textbf{k}}}-I_{N, \emph{\textbf{k}}''}\end{array}\right],
\end{gather}
where
\begin{align}\label{cvxxcba1}\mu_{N;\emph{\textbf{k}}, \emph{\textbf{k}}', \emph{\textbf{k}}''}=-\Im[(g_{N}(2^{-N}\emph{\textbf{k}})-g_{N}(2^{-N}\emph{\textbf{k}}^{'}))(\overline{g_{N}}(2^{-N}\emph{\textbf{k}})-\overline{g_{N}}(2^{-N}\emph{\textbf{k}}^{''}))].\end{align}

\textbf{Output}: $\mathring{f}(2^{-N}\emph{\textbf{k}}):=\mathring{\Re}(f(2^{-N}\emph{\textbf{k}}))+\textbf{i}\mathring{\Im}(f(2^{-N}\emph{\textbf{k}}))$.
\end{appr}

\begin{note} (1)
Since $g_{N}$ is  admissible on $ \Xi_{\Omega,N}$ w.r.t level  $N$,
by Proposition \ref{rongxuxing} we have $\mu_{N;\emph{\textbf{k}}, \emph{\textbf{k}}', \emph{\textbf{k}}''}\neq0$.
(2) Formula \eqref{qiujie255} is derived from \eqref{BVC1234}
and that  the amplitude of the plane wave  $g_{N}$ being  a constant.
\end{note}

\subsubsection{\textbf{Recovery error of Approach} \ref{ppp4}}
For any level $N$ and $(\emph{\textbf{k}}, \emph{\textbf{k}}', \emph{\textbf{k}}'')\in  \Xi_{\Omega,N}$
such that $\emph{\textbf{k}}'=\emph{\textbf{k}}+\emph{\textbf{e}}_{j_{0}}$ and $ \emph{\textbf{k}}''=\emph{\textbf{k}}-\emph{\textbf{e}}_{j_{0}}$ where
$j_{0}\in \{1, \ldots, d\}$ is fixed,
 the quasi-interference intensities $A_{N,\emph{\textbf{k}}, \emph{\textbf{k}}'}$
  and $A_{N,\emph{\textbf{k}}, \emph{\textbf{k}}''}$ are defined as in (\ref{intensity1234}A) and (\ref{intensity1234}B),
  namely,
  \begin{align}\label{fangshenengliang} A_{N,\emph{\textbf{k}}, \emph{\textbf{k}}'}=|f(2^{-N}\emph{\textbf{k}})+g_{N}(2^{-N}\emph{\textbf{k}}')|^{2}, A_{N,\emph{\textbf{k}}, \emph{\textbf{k}}''}=|f(2^{-N}\emph{\textbf{k}})+g_{N}(2^{-N}\emph{\textbf{k}}'')|^{2}.\end{align}
Note that $2^{-N}\emph{\textbf{k}}'\approx2^{-N}\emph{\textbf{k}}$
and $2^{-N}\emph{\textbf{k}}''\approx2^{-N}\emph{\textbf{k}}$. Motivated by this,
in what follows we use the interference  intensities $I_{N,\emph{\textbf{k}}'}=|f(2^{-N}\emph{\textbf{k}}')+g_{N}(2^{-N}\emph{\textbf{k}}')|^{2}$
and $I_{N,\emph{\textbf{k}}''}=|f(2^{-N}\emph{\textbf{k}}'')+g_{N}(2^{-N}\emph{\textbf{k}}'')|^{2}$ to approximate $A_{N,\emph{\textbf{k}}, \emph{\textbf{k}}'}$
and $A_{N,\emph{\textbf{k}}, \emph{\textbf{k}}''},$
respectively.
%

\begin{theo}\label{miyuzhong}
Suppose that the ROI $\Omega\subseteq \mathbb{R}^d$,  the set $ \Xi_{\Omega,N}\subseteq \mathbb{R}^{3d}$
and  the plane wave $g_{N}$ are  as in Approach \ref{ppp4} such that $g_{N}$ is admissible on $ \Xi_{\Omega,N}$ w.r.t level   $N$. Then
there exists a constant  $C_{1}(s,
\varsigma)>0$ (being independent of $g_{N}$) such that for any   $f\in H^{s}(\mathbb{R}^{d})$  satisfying $\nu_{2}(f)>\varsigma>s>d/2$,
 the quasi-interference intensities $A_{N,\emph{\textbf{k}},\emph{\textbf{k}}'}$ and $A_{N,\emph{\textbf{k}},\emph{\textbf{k}}''}$ in \eqref{fangshenengliang} can be   approximated by $I_{N,\emph{\textbf{k}}'}$ and $I_{N,\emph{\textbf{k}}''}$ as follows,
\begin{align}\label{qiujie7gh15}\begin{array}{lllllll}
\Big\|\left[\begin{array}{cccccccccc}A_{N,\emph{\textbf{k}},\emph{\textbf{k}}'}-I_{N,\emph{\textbf{k}}'} \\
A_{N,\emph{\textbf{k}},\emph{\textbf{k}}''}-I_{N,\emph{\textbf{k}}''}\end{array}\right]\Big\|_{2}
\leq C_{1}(s,
\varsigma)\|f\|_{H^{\varsigma}(\mathbb{R}^{d})}\big(\|f\|_{H^{\varsigma}(\mathbb{R}^{d})}+\sup_{\emph{\textbf{x}}\in \Omega}|g_{N}(\emph{\textbf{x}})|\big)2^{-(N+1)\zeta},
\end{array}\end{align}
where  $\zeta=\min\{1, \varsigma-s\}$.
\end{theo}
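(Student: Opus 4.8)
The plan is to express each difference $A_{N,\emph{\textbf{k}},\emph{\textbf{k}}'}-I_{N,\emph{\textbf{k}}'}$ (and likewise with $\emph{\textbf{k}}''$) in terms of a pointwise increment of $f$, and then invoke the two estimates of Proposition \ref{1stlemma}: the uniform bound \eqref{ghgh} and the Lipschitz-type bound \eqref{ghgh1}. First I would set $u=f(2^{-N}\emph{\textbf{k}})$, $v=f(2^{-N}\emph{\textbf{k}}')$, $w=g_{N}(2^{-N}\emph{\textbf{k}}')$ and expand
\begin{align*}
A_{N,\emph{\textbf{k}},\emph{\textbf{k}}'}-I_{N,\emph{\textbf{k}}'}=|u+w|^{2}-|v+w|^{2}=\big(|u|^{2}-|v|^{2}\big)+2\Re\big((u-v)\overline{w}\big).
\end{align*}
Then, by the reverse triangle inequality $\big||u|^{2}-|v|^{2}\big|=\big||u|-|v|\big|\,(|u|+|v|)\le|u-v|\,(|u|+|v|)$, while $\big|2\Re((u-v)\overline{w})\big|\le2|u-v|\,|w|$, so that
\begin{align*}
\big|A_{N,\emph{\textbf{k}},\emph{\textbf{k}}'}-I_{N,\emph{\textbf{k}}'}\big|\le|u-v|\big(|u|+|v|+2|w|\big).
\end{align*}

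Next I would bound the three factors. By \eqref{ghgh}, $|u|,|v|\le\widehat{C}(s,\varsigma)\|f\|_{H^{\varsigma}(\mathbb{R}^{d})}$; and since $g_{N}$ is a plane reference wave its amplitude is constant, whence $|w|=|g_{N}(2^{-N}\emph{\textbf{k}}')|=\sup_{\emph{\textbf{x}}\in\Omega}|g_{N}(\emph{\textbf{x}})|$ — this is the step that makes the plane-wave hypothesis convenient, since the sample point $2^{-N}\emph{\textbf{k}}'$ need not lie in $\Omega$. For the increment, $\emph{\textbf{k}}'=\emph{\textbf{k}}+\emph{\textbf{e}}_{j_{0}}$ gives $\|2^{-N}\emph{\textbf{k}}-2^{-N}\emph{\textbf{k}}'\|_{2}=2^{-N}$, so applying \eqref{ghgh1} with $\eta=2^{-N}\emph{\textbf{e}}_{j_{0}}$ yields
\begin{align*}
|u-v|\le\widehat{C}(s,\varsigma)\,2^{1-\zeta}\|f\|_{H^{\varsigma}(\mathbb{R}^{d})}\,(2^{-N})^{\zeta}=2\,\widehat{C}(s,\varsigma)\|f\|_{H^{\varsigma}(\mathbb{R}^{d})}\,2^{-(N+1)\zeta}.
\end{align*}
Multiplying the three bounds and using $\widehat{C}(s,\varsigma)\ge1$ to absorb the numerical constants gives $\big|A_{N,\emph{\textbf{k}},\emph{\textbf{k}}'}-I_{N,\emph{\textbf{k}}'}\big|\le C(s,\varsigma)\|f\|_{H^{\varsigma}(\mathbb{R}^{d})}\big(\|f\|_{H^{\varsigma}(\mathbb{R}^{d})}+\sup_{\emph{\textbf{x}}\in\Omega}|g_{N}(\emph{\textbf{x}})|\big)2^{-(N+1)\zeta}$, where $C(s,\varsigma)$ depends only on $\widehat{C}(s,\varsigma)$ and hence not on $g_{N}$.

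The bound for $A_{N,\emph{\textbf{k}},\emph{\textbf{k}}''}-I_{N,\emph{\textbf{k}}''}$ is obtained verbatim, using $\emph{\textbf{k}}''=\emph{\textbf{k}}-\emph{\textbf{e}}_{j_{0}}$ so that the displacement again has Euclidean norm $2^{-N}$. Passing from the two scalar estimates to the $\ell^{2}$-norm of the pair costs only a factor $\sqrt2$, which is folded into the final constant $C_{1}(s,\varsigma)$. I do not foresee a genuine obstacle: the argument is elementary once Proposition \ref{1stlemma} is in hand, and the only points requiring a little care are tracking the factor $2^{1-\zeta}$ so that the exponent emerges as $2^{-(N+1)\zeta}$ (rather than $2^{-N\zeta}$), and recording that a plane wave has constant amplitude so that $|g_{N}(2^{-N}\emph{\textbf{k}}')|$ and $|g_{N}(2^{-N}\emph{\textbf{k}}'')|$ may be replaced by $\sup_{\emph{\textbf{x}}\in\Omega}|g_{N}(\emph{\textbf{x}})|$.
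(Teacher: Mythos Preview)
Your argument is correct and follows essentially the same strategy as the paper: reduce each entry to an increment $|f(2^{-N}\emph{\textbf{k}})-f(2^{-N}\emph{\textbf{k}}')|$ times a factor controlled by $\sup|f|$ and the (constant) plane-wave amplitude, then invoke Proposition \ref{1stlemma} (\eqref{ghgh} and \eqref{ghgh1}) together with $\|\emph{\textbf{k}}-\emph{\textbf{k}}'\|_{2}=1$. The only cosmetic difference is that the paper factors $A-I=(\sqrt{A}-\sqrt{I})(\sqrt{A}+\sqrt{I})$ and bounds $|\sqrt{A}-\sqrt{I}|\le|f(2^{-N}\emph{\textbf{k}})-f(2^{-N}\emph{\textbf{k}}')|$ by the triangle inequality, whereas you expand $|u+w|^{2}-|v+w|^{2}$ directly; both routes yield the same final constant $C_{1}(s,\varsigma)=2^{5/2}\widehat{C}(s,\varsigma)^{2}$.
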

\begin{proof}
The square root $\sqrt{A_{N,\emph{\textbf{k}}, \emph{\textbf{k}}'}}$
is estimated as follows,
\begin{align}\label{Cjian}
\begin{array}{lll} \sqrt{A_{N,\emph{\textbf{k}}, \emph{\textbf{k}}'}}&=|f(2^{-N}\emph{\textbf{k}}')+g_{N}(2^{-N}\emph{\textbf{k}}')+f(2^{-N}\emph{\textbf{k}})-f(2^{-N}\emph{\textbf{k}}')|\\
&\leq \sqrt{I_{N,\emph{\textbf{k}}'}}+|f(2^{-N}\emph{\textbf{k}})-f(2^{-N}\emph{\textbf{k}}')|\\
&\leq \sqrt{I_{N,\emph{\textbf{k}}'}}+\widehat{C}(s,\varsigma)2^{1-\zeta}\|f\|_{H^{\varsigma}(\mathbb{R}^{d})}2^{-N\zeta}, \quad
(\ref{Cjian} A)
\end{array}\end{align}
where $(\ref{Cjian} A)$  with $\widehat{C}(s,
\varsigma)\geq1$ is derived from Proposition  \ref{1stlemma} \eqref{ghgh1}
and $\emph{\textbf{k}}'=\emph{\textbf{k}}+\emph{\textbf{e}}_{j_{0}}$.
Similarly, by the triangle inequality and  \eqref{ghgh1} we have  $\sqrt{A_{N,\emph{\textbf{k}},\emph{\textbf{k}}'}}\geq \sqrt{I_{N,\emph{\textbf{k}}'}}-\widehat{C}(s,\varsigma)2^{1-\zeta}\|f\|_{H^{\varsigma}(\mathbb{R}^{d})}2^{-N\zeta}.$
From this and \eqref{Cjian} we have
\begin{align}\label{908} \big|\sqrt{A_{N,\emph{\textbf{k}},\emph{\textbf{k}}'}}-\sqrt{I_{N,\emph{\textbf{k}}'}}\big|\leq \widehat{C}(s,\varsigma)2^{1-\zeta}\|f\|_{H^{\varsigma}(\mathbb{R}^{d})}2^{-N\zeta}.\end{align}
Through the similar procedures as above  we have
\begin{align}\label{909} \big|\sqrt{A_{N,\emph{\textbf{k}},\emph{\textbf{k}}''}}-\sqrt{I_{N,\emph{\textbf{k}}''}}\big|\leq \widehat{C}(s,\varsigma)2^{1-\zeta}\|f\|_{H^{\varsigma}(\mathbb{R}^{d})}2^{-N\zeta}.\end{align}
Then
\begin{align}\label{qiujie715}\begin{array}{lllllll}
\Big\|\left[\begin{array}{cccccccccc}A_{N,\emph{\textbf{k}},\emph{\textbf{k}}'}-I_{N,\emph{\textbf{k}}'} \\
A_{N,\emph{\textbf{k}},\emph{\textbf{k}}''}-I_{N,\emph{\textbf{k}}''}\end{array}\right]\Big\|_{2}\\
=[((\sqrt{A_{N,\emph{\textbf{k}},\emph{\textbf{k}}'}})^2-(\sqrt{I_{N,\emph{\textbf{k}}'}})^2)^{2}+((\sqrt{A_{N,\emph{\textbf{k}},\emph{\textbf{k}}''}})^2-(\sqrt{I_{N,\emph{\textbf{k}}''}})^2)^{2}]^{1/2}\\
\leq \widehat{C}(s,\varsigma)2^{1-\zeta}\|f\|_{H^{\varsigma}(\mathbb{R}^{d})}2^{-N\zeta}\big((\sqrt{A_{N,\emph{\textbf{k}},\emph{\textbf{k}}'}}+\sqrt{I_{N,\emph{\textbf{k}}'}})^{2}+
(\sqrt{A_{N,\emph{\textbf{k}},\emph{\textbf{k}}''}}+\sqrt{I_{N,\emph{\textbf{k}}''}})^{2}\big)^{1/2}\quad (\ref{qiujie715} A)\\
\leq 2^{5/2}\|f\|_{H^{\varsigma}(\mathbb{R}^{d})}\widehat{C}(s,
\varsigma)\big(
\widehat{C}(s,
\varsigma)\|f\|_{H^{\varsigma}(\mathbb{R}^{d})}+\sup_{\emph{\textbf{x}}\in \Omega}|g_{N}(\emph{\textbf{x}})|\big)2^{-(N+1)\zeta}, \quad (\ref{qiujie715} B)
\end{array}\end{align}
where $(\ref{qiujie715} A)$  is derived from \eqref{908} and  \eqref{909},  and $(\ref{qiujie715} B)$ is   from  \eqref{ghgh}. Since $\widehat{C}(s,
\varsigma)\geq1$,
the proof is  completed  by   choosing $C_{1}(s,
\varsigma)=2^{5/2}\widehat{C}^{2}(s,
\varsigma)<\infty$.
\end{proof}
%

Now   it is ready to establish the recovery error for Approach \ref{ppp4}.

\begin{theo}\label{approach1error}
Suppose that $\Omega\subseteq \mathbb{R}^d$ is the ROI and  the two sets
$ \Lambda_{\Omega,N}\subseteq \mathbb{Z}^d$ and $ \Xi_{\Omega,N}\subseteq\mathbb{R}^{3d}$ are  as  in \eqref{12fft6789} and \eqref{xdclingwaijihe}, respectively. Moreover,
the plane wave $g_{N}$    is $\gamma$-admissible on $ \Xi_{\Omega,N}$ w.r.t level   $N$.
Then for any   $f\in H^{s}(\mathbb{R}^{d})$  satisfying $\nu_{2}(f)>\varsigma>s>d/2$ and for  any $\emph{\textbf{k}}\in  \Lambda_{\Omega,N}$,  there holds
\begin{align}\label{qiujie4edf35}\begin{array}{lllllllll}
|f(2^{-N}\emph{\textbf{k}})-\mathring{f}(2^{-N}\emph{\textbf{k}})|
\leq \gamma C_{1}(s,
\varsigma)
\|f\|_{H^{\varsigma}(\mathbb{R}^{d})}\big(\|f\|_{H^{\varsigma}(\mathbb{R}^{d})}+\sup_{\emph{\textbf{x}}\in \Omega}|g_{N}(\emph{\textbf{x}})|\big)
2^{-(N+1)\zeta},\end{array}\end{align}
where $\mathring{f}(2^{-N}\emph{\textbf{k}})$ is the output of Approach \ref{ppp4},   the   constant $C_{1}(s,
\varsigma)$ is as Theorem  \ref{miyuzhong} such that  it  is independent of both  $f$ and $g_{N}$,
and   $\zeta=\min\{1, \varsigma-s\}$.
\end{theo}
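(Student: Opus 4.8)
The plan is to chain together three facts already in hand: the deterministic perturbation bound of Theorem \ref{chubuguji}, the admissibility hypothesis on $g_N$, and the quasi-interference approximation estimate of Theorem \ref{miyuzhong}. First I would identify the output of Approach \ref{ppp4} with the quantity analysed in Theorem \ref{chubuguji}. Since $g_N$ is the \emph{plane} reference wave of Approach \ref{ppp4}, its amplitude is a constant $a_N$, so $|g_N(2^{-N}\mathbf{k})| = |g_N(2^{-N}\mathbf{k}')| = |g_N(2^{-N}\mathbf{k}'')|$ for every $(\mathbf{k},\mathbf{k}',\mathbf{k}'') \in \Xi_{\Omega,N}$. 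Hence the terms $|g_N(2^{-N}\mathbf{k}')|^2 - |g_N(2^{-N}\mathbf{k})|^2$ and $|g_N(2^{-N}\mathbf{k}'')|^2 - |g_N(2^{-N}\mathbf{k})|^2$ appearing in \eqref{BVC1234} (and, likewise, in \eqref{BVC123}) vanish, so formula \eqref{qiujie255} computes exactly the complex number $\mathring{f}(2^{-N}\mathbf{k})$ of Theorem \ref{chubuguji}, and $\Re(f(2^{-N}\mathbf{k})),\Im(f(2^{-N}\mathbf{k}))$ obey the corresponding reduced form of \eqref{BVC123}.

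Next, because $g_N$ is $\gamma$-admissible on $\Xi_{\Omega,N}$, Proposition \ref{rongxuxing} guarantees $\mu_{N;\mathbf{k},\mathbf{k}',\mathbf{k}''}\neq 0$ for every triple in $\Xi_{\Omega,N}$, so Theorem \ref{chubuguji} applies and gives \eqref{cucaoguji}. The geometric prefactor on the right-hand side of \eqref{cucaoguji} is precisely the ratio that is bounded by $\gamma$ in the admissibility condition \eqref{p45tgy} (with $\gamma_N=\gamma$). Substituting this bound into \eqref{cucaoguji} collapses it to
\[
|f(2^{-N}\mathbf{k})-\mathring{f}(2^{-N}\mathbf{k})| \le \gamma \sqrt{(I_{N,\mathbf{k}'}-A_{N,\mathbf{k},\mathbf{k}'})^2 + (I_{N,\mathbf{k}''}-A_{N,\mathbf{k},\mathbf{k}''})^2},
\]
which is \eqref{bzxsd}, valid for each $\mathbf{k}\in\Lambda_{\Omega,N}$ with the associated $\mathbf{k}'=\mathbf{k}+\mathbf{e}_{j_0}$, $\mathbf{k}''=\mathbf{k}-\mathbf{e}_{j_0}$ prescribed in \eqref{xdclingwaijihe}.

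Finally, I would invoke Theorem \ref{miyuzhong}: its hypotheses ($f\in H^s(\mathbb{R}^d)$, $\nu_2(f)>\varsigma>s>d/2$, and $g_N$ admissible on $\Xi_{\Omega,N}$) are exactly those assumed here, so the square-root residual above is bounded by $C_1(s,\varsigma)\|f\|_{H^\varsigma(\mathbb{R}^d)}\big(\|f\|_{H^\varsigma(\mathbb{R}^d)}+\sup_{\mathbf{x}\in\Omega}|g_N(\mathbf{x})|\big)2^{-(N+1)\zeta}$, with $C_1(s,\varsigma)$ independent of $f$ and $g_N$. Multiplying through by $\gamma$ yields \eqref{qiujie4edf35}. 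There is no genuine analytic obstacle: the argument is a bookkeeping chain. The only points demanding care are the identification in the first step (checking that the constant-amplitude cancellation makes \eqref{qiujie255} and \eqref{BVC1234} agree, so that the output of Approach \ref{ppp4} is legitimately the $\mathring{f}$ of Theorem \ref{chubuguji}) and applying Theorem \ref{miyuzhong} with the correct triple $(\mathbf{k},\mathbf{k}+\mathbf{e}_{j_0},\mathbf{k}-\mathbf{e}_{j_0})$ so that the perturbation terms $I_{N,\mathbf{k}'}-A_{N,\mathbf{k},\mathbf{k}'}$, $I_{N,\mathbf{k}''}-A_{N,\mathbf{k},\mathbf{k}''}$ are the ones actually controlled there.
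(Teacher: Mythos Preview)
Your proposal is correct and follows essentially the same route as the paper: apply Theorem~\ref{chubuguji} \eqref{cucaoguji}, bound the geometric prefactor by $\gamma$ via the admissibility condition \eqref{p45tgy}, and then control the residual $\sqrt{(I_{N,\mathbf{k}'}-A_{N,\mathbf{k},\mathbf{k}'})^2+(I_{N,\mathbf{k}''}-A_{N,\mathbf{k},\mathbf{k}''})^2}$ by Theorem~\ref{miyuzhong}. Your explicit remark that the constant amplitude of the plane wave makes \eqref{qiujie255} coincide with \eqref{BVC1234} is a useful clarification that the paper relegates to a note following Approach~\ref{ppp4}.
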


\begin{proof}
For any $(\emph{\textbf{k}}, \emph{\textbf{k}}', \emph{\textbf{k}}'')\in \Xi_{\Omega,N}$,
since $g_{N}$    is $\gamma$-admissible on $ \Xi_{\Omega,N}$ w.r.t level   $N$ then
\begin{align}\label{123cucaoguji}\begin{array}{lllllll}
|f(2^{-N}\emph{\textbf{k}})-\mathring{f}(2^{-N}\emph{\textbf{k}})|\\
=\Big\|\left[\begin{array}{cccccccccc}\Re(f(2^{-N}\emph{\textbf{k}}))\\
\Im(f(2^{-N}\emph{\textbf{k}}))
\end{array}\right]-\left[\begin{array}{cccccccccc}\mathring{\Re}(f(2^{-N}\emph{\textbf{k}}))\\
\mathring{\Im}(f(2^{-N}\emph{\textbf{k}}))
\end{array}\right]\Big\|_{2}\\
\leq\frac{\max\{|g_{N}(2^{-N}\emph{\textbf{k}})-g_{N}(2^{-N}\emph{\textbf{k}}')|, |\overline{g_{N}}(2^{-N}\emph{\textbf{k}})-\overline{g_{N}}(2^{-N}\emph{\textbf{k}}'')|\}}{|\Im[(g_{N}(2^{-N}\emph{\textbf{k}})-g_{N}(2^{-N}\emph{\textbf{k}}^{'}))(\overline{g_{N}}(2^{-N}\emph{\textbf{k}})-\overline{g_{N}}(2^{-N}\emph{\textbf{k}}^{''}))]|}
\\
\quad \times\sqrt{(I_{N,\emph{\textbf{k}}'}-A_{N,\emph{\textbf{k}},\emph{\textbf{k}}'})^{2}+(I_{N,\emph{\textbf{k}}''}-A_{N,\emph{\textbf{k}},\emph{\textbf{k}}''})^{2}}\quad (\ref{123cucaoguji}A)\\
\leq\gamma
\sqrt{(I_{N,\emph{\textbf{k}}'}-A_{N,\emph{\textbf{k}},\emph{\textbf{k}}'})^{2}+(I_{N,\emph{\textbf{k}}''}-A_{N,\emph{\textbf{k}},\emph{\textbf{k}}''})^{2}} \quad (\ref{123cucaoguji}B)\\
\leq\gamma C_{1}(s,
\varsigma)\|f\|_{H^{\varsigma}(\mathbb{R}^{d})}\big(\|f\|_{H^{\varsigma}(\mathbb{R}^{d})}+\sup_{\emph{\textbf{x}}\in \Omega}|g_{N}(\emph{\textbf{x}})|\big)2^{-(N+1)\zeta}, \quad (\ref{123cucaoguji}C)
\end{array}
\end{align}
where $(\ref{123cucaoguji}A)$, $(\ref{123cucaoguji}B)$ and $(\ref{123cucaoguji}C)$ are derived from Theorem \ref{chubuguji} \eqref{cucaoguji},
Definition \ref{addefinition} \eqref{p45tgy} (with $\gamma_{N}=\gamma$) and Theorem \ref{miyuzhong}, respectively.
This completes the proof.
\end{proof}

\subsubsection{\textbf{Recovery of functions in Sobolev space by single-shot interference intensity: plane wave method}}
Now based on Theorem \ref{approach1error},   it is  ready to establish our first  main result as follows.

\begin{theo}\label{firstmaintheorem}
Suppose that $\phi\in H^{s}(\mathbb{R}^{d})$   is a nonnegative refinable  function such that $\nu_{2}(\phi)>\varsigma>s>d/2$,  $\hbox{supp}(\phi)\subseteq \mathcal{M}=[0, M_{1}]\times \cdots\times [0, M_{d}]$, and its mask symbol has
$\kappa+1$  sum rules such that $\kappa+1>\varsigma$. As previously, $\Omega\subseteq \mathbb{R}^{d}$
is the bounded   ROI. Associated with $\Omega$ and $\mathcal{M}$
the set $\Xi_{\Omega,N}\subseteq \mathbb{R}^{3d}$ is defined  in \eqref{xdclingwaijihe}
such that for any  $(\emph{\textbf{k}}, \emph{\textbf{k}}', \emph{\textbf{k}}'')\in  \Xi_{\Omega,N}$
there  holds  $\emph{\textbf{k}}'=\emph{\textbf{k}}+\emph{\textbf{e}}_{j_{0}}$ and $ \emph{\textbf{k}}''=\emph{\textbf{k}}-\emph{\textbf{e}}_{j_{0}}$, where $j_{0}\in\{1, \ldots, d\}$
is fixed.
Additionally,   $\{g_{N}\}^{\infty}_{N=1}$ is
a sequence of   uniformly $\gamma$-admissible plane reference waves on  $\{\Xi_{\Omega,N}\}^{\infty}_{N=1}$.
Then  there exist      $C (s,
\varsigma), C_{1}(s,
\varsigma, \Omega)>0$  such that  for  any   $f\in H^{s}(\mathbb{R}^{d})$
satisfying  $\nu_{2}(f)>\varsigma>s$,
its restriction
on $\Omega$ can be approximated by $\big(\sum_{\emph{\textbf{k}}\in  \Lambda_{\Omega,N}}\mathring{f}(2^{-N}\emph{\textbf{k}})\phi(2^{N}\cdot-\emph{\textbf{k}})\big)|_{\Omega}$
as follows,
\begin{align}\label{90876}\begin{array}{lll}
\Big\|f|_{\Omega}-\big(\sum_{\emph{\textbf{k}}\in  \Lambda_{\Omega,N}}\mathring{f}(2^{-N}\emph{\textbf{k}})\phi(2^{N}\cdot-\emph{\textbf{k}})\big)|_{\Omega}\Big\|_{L^2(\mathbb{R}^d)}\\
\leq
\|f\|_{H^{\varsigma}(\mathbb{R}^{d})}\Big[C(s,
\varsigma) 2^{-N(\varsigma-s)/2}+\gamma C_{1}(s,
\varsigma, \Omega)\big(\|f\|_{H^{\varsigma}(\mathbb{R}^{d})}+\sup_{l\geq1}\sup_{\emph{\textbf{x}}\in \Omega}|g_{l}(\emph{\textbf{x}})|\big)
2^{-(N+1)\zeta}\Big],
\end{array}\end{align}
where the level $N\geq1$ is arbitrary,  the data set  $\{\mathring{f}(2^{-N}\emph{\textbf{k}}): \emph{\textbf{k}}\in  \Lambda_{\Omega,N}\}$ is derived from   Approach  \ref{ppp4} and
$\zeta=\min\{1, \varsigma-s\}$.
\end{theo}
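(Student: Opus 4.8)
The plan is to reuse the error decomposition already set up in Note~\ref{wuchafenxil}: on $\Omega$ we write
\[
f-\sum_{\emph{\textbf{k}}\in\Lambda_{\Omega,N}}\mathring{f}(2^{-N}\emph{\textbf{k}})\phi(2^N\cdot-\emph{\textbf{k}})=\Big(f-\sum_{\emph{\textbf{k}}\in\mathbb{Z}^d}f(2^{-N}\emph{\textbf{k}})\phi(2^N\cdot-\emph{\textbf{k}})\Big)+\sum_{\emph{\textbf{k}}\in\Lambda_{\Omega,N}}\big(f(2^{-N}\emph{\textbf{k}})-\mathring{f}(2^{-N}\emph{\textbf{k}})\big)\phi(2^N\cdot-\emph{\textbf{k}}),
\]
where the identity uses the support reduction behind \eqref{roi}--\eqref{fft1}: since $\mathrm{supp}(\phi)\subseteq\mathcal{M}$, for $\emph{\textbf{x}}\in\Omega$ the only surviving terms $\phi(2^N\emph{\textbf{x}}-\emph{\textbf{k}})$ are those with $\emph{\textbf{k}}\in\Lambda_{\Omega,N}$, so the truncated and full sums of $f(2^{-N}\emph{\textbf{k}})\phi(2^N\emph{\textbf{x}}-\emph{\textbf{k}})$ agree on $\Omega$. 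Before estimating, I would first record that the hypotheses on $\{g_{N}\}$ legitimize Approach~\ref{ppp4}: uniform $\gamma$-admissibility makes each $g_{N}$ $\gamma$-admissible on $\Xi_{\Omega,N}$, so by Proposition~\ref{rongxuxing} each $\mu_{N;\emph{\textbf{k}},\emph{\textbf{k}}',\emph{\textbf{k}}''}\ne0$, the outputs $\mathring{f}(2^{-N}\emph{\textbf{k}})$ are well defined, and Theorem~\ref{approach1error} applies.

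The first bracket, after restriction to $\Omega$ and the inequality $\|\cdot\|_{L^2(\mathbb{R}^d)}\le\|\cdot\|_{H^s(\mathbb{R}^d)}$, is controlled by Proposition~\ref{caiyangapproximation} \eqref{cxzbc} (whose hypotheses hold since $\kappa+1>\varsigma>s>d/2$ and $\phi\in H^{\varsigma}(\mathbb{R}^d)$), giving the term $C(s,\varsigma)\|f\|_{H^{\varsigma}(\mathbb{R}^d)}2^{-N(\varsigma-s)/2}$. For the second sum I would exploit that $\phi$ is nonnegative with accuracy order at least $1$: since $\kappa+1>\varsigma>d/2\ge1/2$ forces the integer $\kappa+1\ge1$, the unit decomposition \eqref{yydnew} rescales to $\sum_{\emph{\textbf{k}}\in\mathbb{Z}^d}\phi(2^N\emph{\textbf{x}}-\emph{\textbf{k}})\equiv1$, hence $0\le\sum_{\emph{\textbf{k}}\in\Lambda_{\Omega,N}}\phi(2^N\emph{\textbf{x}}-\emph{\textbf{k}})\le1$ for all $\emph{\textbf{x}}$. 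Consequently, pointwise on $\Omega$,
\[
\Big|\sum_{\emph{\textbf{k}}\in\Lambda_{\Omega,N}}\big(f(2^{-N}\emph{\textbf{k}})-\mathring{f}(2^{-N}\emph{\textbf{k}})\big)\phi(2^N\emph{\textbf{x}}-\emph{\textbf{k}})\Big|\le\max_{\emph{\textbf{k}}\in\Lambda_{\Omega,N}}\big|f(2^{-N}\emph{\textbf{k}})-\mathring{f}(2^{-N}\emph{\textbf{k}})\big|,
\]
so its $L^2(\mathbb{R}^d)$-norm over $\Omega$ is at most $\sqrt{\mathrm{Vol}(\Omega)}$ times the right-hand maximum, which Theorem~\ref{approach1error} bounds by $\gamma C_{1}(s,\varsigma)\|f\|_{H^{\varsigma}(\mathbb{R}^d)}\big(\|f\|_{H^{\varsigma}(\mathbb{R}^d)}+\sup_{\emph{\textbf{x}}\in\Omega}|g_{N}(\emph{\textbf{x}})|\big)2^{-(N+1)\zeta}$. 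Adding the two estimates, setting $C_{1}(s,\varsigma,\Omega):=\sqrt{\mathrm{Vol}(\Omega)}\,C_{1}(s,\varsigma)$, and using $\sup_{\emph{\textbf{x}}\in\Omega}|g_{N}(\emph{\textbf{x}})|\le\sup_{l\ge1}\sup_{\emph{\textbf{x}}\in\Omega}|g_{l}(\emph{\textbf{x}})|$ (finite since for a plane wave $|g_{N}(\emph{\textbf{x}})|\equiv a_{N}$ and uniform admissibility forces $\sup_{N}a_{N}<\infty$) gives \eqref{90876}.

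The one genuinely delicate step is the $L^2$-estimate of the second sum. A crude bound would introduce a factor $\#\Lambda_{\Omega,N}\asymp 2^{Nd}$ that overwhelms the $2^{-(N+1)\zeta}$ decay; it is exactly the nonnegativity of $\phi$ together with the partition-of-unity identity---i.e.\ the hypotheses $\phi\ge0$ and $\mathrm{sr}_{\phi}\ge1$---that collapse $\sum_{\emph{\textbf{k}}\in\Lambda_{\Omega,N}}|\phi(2^N\cdot-\emph{\textbf{k}})|$ to a quantity bounded by $1$ uniformly in $N$, leaving only the harmless $\sqrt{\mathrm{Vol}(\Omega)}$ and the already-established pointwise bound from Theorem~\ref{approach1error}. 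The rest is a routine assembly of Proposition~\ref{caiyangapproximation} and Theorem~\ref{approach1error}.
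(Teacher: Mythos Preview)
Your proposal is correct and follows essentially the same route as the paper's proof: the same two-term decomposition from Note~\ref{wuchafenxil}, Proposition~\ref{caiyangapproximation} for the sampling error, and the nonnegativity of $\phi$ together with the unit decomposition \eqref{yydnew} to bound the coefficient-error sum by $\sqrt{\mathrm{Vol}(\Omega)}$ times the pointwise bound from Theorem~\ref{approach1error}, with $C_{1}(s,\varsigma,\Omega)=\sqrt{\mathrm{Vol}(\Omega)}\,C_{1}(s,\varsigma)$. Your closing comment on why a cardinality-based (Riesz) estimate would fail is exactly the content of the paper's remark following the proof.
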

\begin{proof}
We first estimate
\begin{align}\label{8765}\begin{array}{llllll}
 \|\big(\sum_{\emph{\textbf{k}}\in\mathbb{Z}^{d}}\mathring{f}(2^{-N}\emph{\textbf{k}})\phi(2^{N}\cdot-\emph{\textbf{k}})-f\big)\big|_{\Omega}\|_{L^2(\mathbb{R}^d)}\\
 =\|\big(\sum_{\emph{\textbf{k}}\in \Lambda_{\Omega,N}}\mathring{f}(2^{-N}\emph{\textbf{k}})\phi(2^{N}\cdot-\emph{\textbf{k}})-f\big)\big|_{\Omega}\|_{L^2(\mathbb{R}^d)} \quad (\ref{8765} A)\\
 \leq \|\big(\sum_{\emph{\textbf{k}}\in \Lambda_{\Omega,N}}f(2^{-N}\emph{\textbf{k}})\phi(2^{N}\cdot-\emph{\textbf{k}})-f\big)\big|_{\Omega}\|_{L^2(\mathbb{R}^d)}\\
\ +\|\big(\sum_{\emph{\textbf{k}}\in \Lambda_{\Omega,N}}(f(2^{-N}\emph{\textbf{k}})-\mathring{f}(2^{-N}\emph{\textbf{k}}))\phi(2^{N}\cdot-\emph{\textbf{k}})\big)\big|_{\Omega}\|_{L^2(\mathbb{R}^d)}\\
 \leq \|\sum_{\emph{\textbf{k}}\in \mathbb{Z}^d}f(2^{-N}\emph{\textbf{k}})\phi(2^{N}\cdot-\emph{\textbf{k}})-f\|_{L^2(\mathbb{R}^d)}\quad (\ref{8765} B)\\
\ +\|\big(\sum_{\emph{\textbf{k}}\in \Lambda_{\Omega,N}}(f(2^{-N}\emph{\textbf{k}})-\mathring{f}(2^{-N}\emph{\textbf{k}}))\phi(2^{N}\cdot-\emph{\textbf{k}})\big)\big|_{\Omega}\|_{L^2(\mathbb{R}^d)}, \quad (\ref{8765} C)\end{array}\end{align}
where $(\ref{8765} A)$ is derived from \eqref{roi}.
On the other hand, by $\kappa+1>\varsigma$ it follows from Proposition  \ref{caiyangapproximation}
and $\|\cdot\|_{L^2(\mathbb{R}^{d})}\leq\|\cdot\|_{H^s(\mathbb{R}^d)}$ that  there exists a  constant $C(s,
\varsigma)$ being independent of $f$
such that $(\ref{8765} B)$ is estimated as follows,
\begin{align}\begin{array}{lll} \label{bound67890123}
\displaystyle
\big\|\sum_{\emph{\textbf{k}}\in \mathbb{Z}^d}f(2^{-N}\emph{\textbf{k}})\phi(2^{N}\cdot-\emph{\textbf{k}})-f\big\|_{L^2(\mathbb{R}^d)} \leq C(s,
\varsigma)\|f\|_{H^{\varsigma}(\mathbb{R}^{d})} 2^{-N(\varsigma-s)/2}.
\end{array}
\end{align}
Now we   estimate $(\ref{8765} C)$ as follows,
\begin{align}\begin{array}{llllll}\label{yyy345}
 \|\big(\sum_{\emph{\textbf{k}}\in\Lambda_{\Omega,N}}(f(2^{-N}\emph{\textbf{k}})-\mathring{f}(2^{-N}\emph{\textbf{k}}))\phi(2^{N}\cdot-\emph{\textbf{k}})\big)\big|_{\Omega}\|_{L^2(\mathbb{R}^d)}\\
 \leq\|\big(\sum_{\emph{\textbf{k}}\in\Lambda_{\Omega,N}}|f(2^{-N}\emph{\textbf{k}})-\mathring{f}(2^{-N}\emph{\textbf{k}})|\phi(2^{N}\cdot-\emph{\textbf{k}})\big)\big|_{\Omega}\|_{L^2(\mathbb{R}^d)} \quad  (\ref{yyy345} \hbox{A})\\
 \leq\sup_{\emph{\textbf{k}}\in\Lambda_{\Omega,N}}\{|f(2^{-N}\emph{\textbf{k}})-\mathring{f}(2^{-N}\emph{\textbf{k}})|\}\|\big(\sum_{\emph{\textbf{k}}\in\Lambda_{\Omega,N}}\phi(2^{N}\cdot-\emph{\textbf{k}})\big)\big|_{\Omega}\|_{L^2(\mathbb{R}^d)} \\
 \leq \gamma C_{1}(s,
\varsigma)
\|f\|_{H^{\varsigma}(\mathbb{R}^{d})}\big(\|f\|_{H^{\varsigma}(\mathbb{R}^{d})}+\sup_{l}\sup_{\emph{\textbf{x}}\in \Omega}|g_{l}(\emph{\textbf{x}})|\big)
\hbox{Vol}^{1/2}(\Omega)2^{-(N+1)\zeta}, \quad  (\ref{yyy345} \hbox{B})
 \end{array}\end{align}
where the triangle inequality and  nonnegativity   of  $\phi$
are   used in (\ref{yyy345}A), and (\ref{yyy345}\hbox{B}) is derived from \eqref{qiujie4edf35} and  the unit decomposition  property  \eqref{yydnew} of $\phi$.
Combining \eqref{8765}, \eqref{bound67890123} and \eqref{yyy345} we have
\begin{align}\begin{array}{llllll}
\|\big(\sum_{\emph{\textbf{k}}\in\mathbb{Z}^{d}}\mathring{f}(2^{-N}\emph{\textbf{k}})\phi(2^{N}\cdot-\emph{\textbf{k}})-f\big)\big|_{\Omega}\|_{L^2(\mathbb{R}^d)}\\
\leq C(s,
\varsigma)\|f\|_{H^{\varsigma}(\mathbb{R}^{d})} 2^{-N(\varsigma-s)/2}\\
+\gamma C_{1}(s,
\varsigma)
\|f\|_{H^{\varsigma}(\mathbb{R}^{d})}\big(\|f\|_{H^{\varsigma}(\mathbb{R}^{d})}+\sup_{l\geq1}\sup_{\emph{\textbf{x}}\in \Omega}|g_{l}(\emph{\textbf{x}})|\big)
\hbox{Vol}^{1/2}(\Omega)2^{-(N+1)\zeta}.
\end{array}\end{align}
Now the proof can be concluded by  choosing $C_{1}(s,
\varsigma, \Omega)=C_{1}(s, \varsigma)\hbox{Vol}^{1/2}(\Omega)<\infty$.
\end{proof}

\begin{rem}\label{herit}(1)
It follows from Theorem  \ref{rongxutiaojianpingm}   that there are many sequences  of plane  reference waves such that
the corresponding  approximation in Theorem \ref{firstmaintheorem} is stable as the level $N$ tends to $\infty$.
A typical choice  is   \begin{align}\label{KJKKK}\{g_{N}(x_{1},\ldots, x_{d})\}^{\infty}_{N=1}=\{e^{\textbf{i}2^{N-2}\vartheta\pi(x_{1}+\cdots+x_{d})}\}^{\infty}_{N=1}\end{align}
 with   $\vartheta\approx1$ such that the term $\sup_{l}\sup_{\emph{\textbf{x}}\in \Omega}|g_{l}(\emph{\textbf{x}})|$
in \eqref{90876} equals to $1$ and Definition \ref{addefinition} (i) holds. For every $g_{N}$,  by \eqref{KKK2345678}  we have
its   admissibility exponent  $\gamma\approx2$. (2)
By \eqref{fft12345}, $\Lambda_{\Omega,N}\subseteq\Lambda_{\Omega,N+1}$. Then it follows from
\eqref{xdclingwaijihe} that $\Xi_{\Omega,N}\subseteq\Xi_{\Omega, N+1}.$
Consequently, besides the above mentioned  stability the sequence in \eqref{KJKKK} of plane reference waves
also enjoys the  inheritance:
$ I_{\Xi_{\Omega,N}}
\subseteq I_{\Xi_{\Omega,N+1}}.
$
Such a  nested property facilitates  updating the  intensity  set from the  lower level to the  higher level.
(3) By Note \ref{yilaixing}, the choice of coordinate  $j_{0}$ probably affects the admissibility $\gamma$
and consequently affects the error estimate in \eqref{90876}.
\end{rem}

\begin{note}
The construction of the required  refinable function $\phi$ in Theorem \ref{firstmaintheorem}
has been addressed in section \ref{gouzaojiaxihanshu}.
\end{note}



\begin{rem}
 For a refinable function $\phi$, its integer shifts commonly constitute a Riesz basis,
  namely, there exist constants
 $0<C_{1}\leq C_{2}$ such that for any square integrable sequence $\{c_{\emph{\textbf{k}}}\}_{\emph{\textbf{k}}\in \mathbb{Z}^d}$
 it holds that
 \begin{align}\label{beiyudushu}
 2^{-dN/2}C_{1}\big(\sum_{\emph{\textbf{k}}\in \mathbb{Z}^d}|c_{\emph{\textbf{k}}}|^{2}\big)^{1/2}\leq\Big\|\sum_{\emph{\textbf{k}}\in \mathbb{Z}^d}c_{\emph{\textbf{k}}}\phi(2^{N}\cdot-\emph{\textbf{k}})\Big\|_{L^2(\mathbb{R}^d)}\leq2^{-dN/2}C_{2}\big(\sum_{\emph{\textbf{k}}\in \mathbb{Z}^d}|c_{\emph{\textbf{k}}}|^{2}\big)^{1/2}.
 \end{align}
 In \eqref{yyy345}, however,  we   does not use   the   Riesz basis property  to estimate
\begin{align}\label{kjing}\big\|\big(\sum_{\emph{\textbf{k}}\in\Lambda_{\Omega,N}}(f(2^{-N}\emph{\textbf{k}})-\mathring{f}(2^{-N}\emph{\textbf{k}}))\phi(2^{N}\cdot-\emph{\textbf{k}}\big)\big|_{\Omega}\big\|_{L^2(\mathbb{R}^d)}.\end{align}
 We next explain this.
 If the inequality on the right-hand side of \eqref{beiyudushu} is used to estimate \eqref{kjing} then
 \begin{align}\label{jkkjing}\begin{array}{lllll}
 \displaystyle \big\|\big(\sum_{\emph{\textbf{k}}\in\Lambda_{\Omega,N}}(f(2^{-N}\emph{\textbf{k}})-\mathring{f}(2^{-N}\emph{\textbf{k}}))\phi(2^{N}\cdot-\emph{\textbf{k}}\big)\big|_{\Omega}\big\|_{L^2(\mathbb{R}^d)}\\
  \displaystyle \leq 2^{-dN/2}C_{2}\big(\sum_{\emph{\textbf{k}}\in\Lambda_{\Omega,N}}|f(2^{-N}\emph{\textbf{k}})-\mathring{f}(2^{-N}\emph{\textbf{k}})|^{2}\big)^{1/2}\\
 =\hbox{O}(2^{-dN/2}2^{dN}2^{-(N+1)\zeta})\quad (\ref{jkkjing} A)\\
 =\hbox{O}(2^{N(d/2-\zeta)}), \quad (\ref{jkkjing} B)
 \end{array}\end{align}
where we  use the cardinality  $\#\Lambda_{\Omega,N}=\hbox{O}(2^{dN})$
and \eqref{qiujie4edf35} in $(\ref{jkkjing} A)$.
On the other hand,  it follows from   Theorem \ref{firstmaintheorem}
that $\zeta=\min\{1, \varsigma-s\}$. If $d\geq2$
then the estimation in $(\ref{jkkjing} B)$
does not provide the decay information as $N\rightarrow\infty$.
This   is due to $\#\Lambda_{\Omega,N}=\hbox{O}(2^{dN})$, an exponential growth w.r.t $N$.
Instead,  (\ref{yyy345}\hbox{A}) and (\ref{yyy345}\hbox{B}) which are derived from  the nonnegativity  property of $\phi$ and
the unit decomposition  \eqref{yydnew} imply  that the corresponding estimate therein  is independent of such a cardinality  growth.
\end{rem}

\subsection{The second main  result: the spherical reference wave method}\label{qiumianbocase}
In what follows, we establish a  recovery  approach   for the  $N$-level  data $\{f(2^{-N}\emph{\textbf{k}}): \emph{\textbf{k}}\in  \Lambda_{\Omega,N}\}$   in  Scheme \ref{xinsuanfa}  \textbf{step $(1')$} from
the single-shot interference (with a single spherical reference  wave) intensity set:
\begin{align}\label{nengliangqiumianbo123}\begin{array}{lllllll} I_{\Xi_{\Omega,N}}=\{I_{N, \emph{\textbf{k}}}, I_{N, \emph{\textbf{k}}'},
I_{N, \emph{\textbf{k}}''}: \emph{\textbf{k}}\in  \Lambda_{\Omega,N},
\emph{\textbf{k}}'=(1+2^{-N})\emph{\textbf{k}},
\emph{\textbf{k}}''=(1-2^{-N})\emph{\textbf{k}}\}.
\end{array}\end{align}
It is the counterpart of Approach \ref{ppp4} for the case of  plane reference wave.

\begin{appr}\label{ppp55}
\textbf{Input}:
level $N$, ROI $\Omega$,
the two sets
$ \Lambda_{\Omega,N}$ and $ \Xi_{\Omega,N}$  defined in \eqref{fft1}
and \eqref{lingwaijihehewenjie}
such that $\textbf{0}\notin\Lambda_{\Omega,N}$,
a  spherical  reference  wave $g_{N}(\emph{\textbf{x}})=\frac{a_{N}e^{\textbf{i}\nu_{N} \|\emph{\textbf{x}}\|_{2}}}{\|\emph{\textbf{x}}\|_{2}}$
 being admissible on $ \Xi_{\Omega,N}$ w.r.t level   $N$,
interference intensity set $I_{\Xi_{\Omega,N}}$ in \eqref{nengliangqiumianbo123}.

\textbf{Step 1}: For any $\emph{\textbf{k}}\in  \Lambda_{\Omega,N}$, compute $c=\Re(g_{N}(2^{-N}\emph{\textbf{k}})-g_{N}(2^{-N}\emph{\textbf{k}}')),$
$d=\Im(g_{N}(2^{-N}\emph{\textbf{k}})$
$-g_{N}(2^{-N}\emph{\textbf{k}}'))$,  $h=\Re(g_{N}(2^{-N}\emph{\textbf{k}})-g_{N}(2^{-N}\emph{\textbf{k}}'')) $ and
$e=\Im(g_{N}(2^{-N}\emph{\textbf{k}})-g_{N}(2^{-N}\emph{\textbf{k}}''))$,
 where $\emph{\textbf{k}}'=(1+2^{-N})\emph{\textbf{k}}$ and $ \emph{\textbf{k}}''=(1-2^{-N})\emph{\textbf{k}}$.

\textbf{Step 2}: Compute
\begin{gather}\label{qiujie2551}
\left[\begin{array}{cccccccccc}\mathring{\Re}(f(2^{-N}\emph{\textbf{k}})) \\
\mathring{\Im}(f(2^{-N}\emph{\textbf{k}}))\end{array}\right]=\frac{1}{2\mu_{N;\emph{\textbf{k}}, \emph{\textbf{k}}', \emph{\textbf{k}}''}}\left[\begin{array}{cccccccccc}
 e &-d \\-h &c\end{array}\right]\left[\begin{array}{cccccccccc}I_{N,\emph{\textbf{k}}}^{2}-I^{2}_{N,\emph{\textbf{k}}'}-\frac{a^{2}_{N}}{\|\emph{\textbf{k}}\|^{2}}(1-\frac{1}{(1+2^{-N})^{2}}) \\ I_{N,\emph{\textbf{k}}}^{2}-I^{2}_{N,\emph{\textbf{k}}''}+\frac{a^{2}_{N}}{\|\emph{\textbf{k}}\|^{2}}(\frac{1}{(1-2^{-N})^{2}}-1) \end{array}\right],
\end{gather}
where
\begin{align}\label{123cvxxcba1}\mu_{N;\emph{\textbf{k}}, \emph{\textbf{k}}', \emph{\textbf{k}}''}=-\Im[(g_{N}(2^{-N}\emph{\textbf{k}})-g_{N}(2^{-N}\emph{\textbf{k}}^{'}))(\overline{g_{N}}(2^{-N}\emph{\textbf{k}})-\overline{g_{N}}(2^{-N}\emph{\textbf{k}}^{''}))].\end{align}

\textbf{Output}: $\mathring{f}(2^{-N}\emph{\textbf{k}}):=\mathring{\Re}(f(2^{-N}\emph{\textbf{k}}))+\textbf{i}\mathring{\Im}(f(2^{-N}\emph{\textbf{k}}))$.
\end{appr}

\begin{note}
(1) Formula \eqref{qiujie2551} is derived from \eqref{BVC1234} and  $\emph{\textbf{k}}'=(1+2^{-N})\emph{\textbf{k}}$, $ \emph{\textbf{k}}''=(1-2^{-N})\emph{\textbf{k}}$.
(2) It follows from the admissibility of   $g_{N}$ and  Proposition \ref{rongxuxing} that  $\mu_{N;\emph{\textbf{k}}, \emph{\textbf{k}}', \emph{\textbf{k}}''}\neq0$.
\end{note}

\begin{theo}\label{secondmaintheorem}
Suppose that $\phi\in H^{s}(\mathbb{R}^{d})$   is a nonnegative refinable  function such that $\nu_{2}(\phi)>\varsigma>s>d/2$,  $\hbox{supp}(\phi)\subseteq [0, M_{1}]\times \cdots\times [0, M_{d}]$ and its mask symbol has
$\kappa+1$ sum rules such that $\kappa+1>\varsigma$. Additionally,  $\Omega\subseteq \mathbb{R}^{d}$
is a bounded  ROI and  the two sets
$ \Lambda_{\Omega,N}$ and $ \Xi_{\Omega,N}$  are as in Approach \ref{ppp55}
such that $\textbf{0}\notin\Lambda_{\Omega,N}$.
Moreover,  $\{g_{N}\}^{\infty}_{N=1}$ is
a sequence of   uniformly $\gamma$-admissible spherical reference waves on $ \{\Xi_{\Omega,N}\}^{\infty}_{N=1}$.
Then there exists $C(s,
\varsigma), \widehat{C}_{1}(s,\varsigma,\Omega)>0$ such that  for any   $f\in H^{s}(\mathbb{R}^{d})$ satisfying  $\nu_{2}(f)>\varsigma>s$,
its restriction
on $\Omega$ can be approximated by $\big(\sum_{\emph{\textbf{k}}\in  \Lambda_{\Omega,N}}\mathring{f}(2^{-N}\emph{\textbf{k}})\phi(2^{N}\cdot-\emph{\textbf{k}})\big)|_{\Omega}$
as follows,
\begin{align}\label{908716}\begin{array}{lll}
\Big\|f|_{\Omega}-\big(\sum_{\emph{\textbf{k}}\in  \Lambda_{\Omega,N}}\mathring{f}(2^{-N}\emph{\textbf{k}})\phi(2^{N}\cdot-\emph{\textbf{k}})\big)|_{\Omega}\Big\|_{L^2{(\mathbb{R}^d)}}\\
\leq
C(s,\varsigma)\|f\|_{H^{\varsigma}(\mathbb{R}^{d})} 2^{-N(\varsigma-s)/2}
+\gamma\widehat{C}_{1}(s,\varsigma,\Omega)\|f\|_{H^{\varsigma}(\mathbb{R}^{d})}\big(\|f\|_{H^{\varsigma}(\mathbb{R}^{d})}+\sup_{k\geq1}\sup_{\emph{\textbf{x}}\in \Omega}|g_{k}(\emph{\textbf{x}})|\big)2^{-N\zeta},
\end{array}\end{align}
where the level $N\geq1$,  $\zeta=\min\{1, \varsigma-s\}$,  the data set  $\{\mathring{f}(2^{-N}\emph{\textbf{k}}): \emph{\textbf{k}}\in  \Lambda_{\Omega,N}\}$ is derived from   Approach   \ref{ppp55}.
\end{theo}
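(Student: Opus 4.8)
The plan is to run the same three–layer argument that proves Theorem \ref{firstmaintheorem}, substituting for the plane–wave estimate of Theorem \ref{miyuzhong} a spherical counterpart adapted to the geometry $\emph{\textbf{k}}'=(1+2^{-N})\emph{\textbf{k}}$, $\emph{\textbf{k}}''=(1-2^{-N})\emph{\textbf{k}}$ fixed in \eqref{lingwaijihehewenjie}. First I would split the error, using the truncation \eqref{roi}:
\[
\Big\|f|_{\Omega}-\big(\sum_{\emph{\textbf{k}}\in \Lambda_{\Omega,N}}\mathring{f}(2^{-N}\emph{\textbf{k}})\phi(2^{N}\cdot-\emph{\textbf{k}})\big)|_{\Omega}\Big\|_{L^2(\mathbb{R}^d)}\le T_1+T_2,
\]
with $T_1=\big\|\sum_{\emph{\textbf{k}}\in \mathbb{Z}^d}f(2^{-N}\emph{\textbf{k}})\phi(2^{N}\cdot-\emph{\textbf{k}})-f\big\|_{L^2(\mathbb{R}^d)}$ and $T_2=\big\|\big(\sum_{\emph{\textbf{k}}\in \Lambda_{\Omega,N}}(f(2^{-N}\emph{\textbf{k}})-\mathring{f}(2^{-N}\emph{\textbf{k}}))\phi(2^{N}\cdot-\emph{\textbf{k}})\big)|_{\Omega}\big\|_{L^2(\mathbb{R}^d)}$. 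Since $\kappa+1>\varsigma$, Proposition \ref{caiyangapproximation} together with $\|\cdot\|_{L^2(\mathbb{R}^d)}\le\|\cdot\|_{H^s(\mathbb{R}^d)}$ gives $T_1\le C(s,\varsigma)\|f\|_{H^{\varsigma}(\mathbb{R}^d)}2^{-N(\varsigma-s)/2}$. For $T_2$, the estimate \eqref{yyy345} — triangle inequality, nonnegativity of $\phi$, and the unit decomposition \eqref{yydnew} — bounds $T_2$ by $\mathrm{Vol}^{1/2}(\Omega)\,\sup_{\emph{\textbf{k}}\in\Lambda_{\Omega,N}}|f(2^{-N}\emph{\textbf{k}})-\mathring{f}(2^{-N}\emph{\textbf{k}})|$, so the whole proof reduces to a uniform pointwise bound on $|f(2^{-N}\emph{\textbf{k}})-\mathring{f}(2^{-N}\emph{\textbf{k}})|$ for Approach \ref{ppp55}.

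To obtain this pointwise bound I would first establish the spherical analogue of Theorem \ref{miyuzhong}. Because $\textbf{0}\notin\Lambda_{\Omega,N}$ for every $N$, Proposition \ref{dbdn} and \eqref{qingyise} give $0<q_{l_0}\le\|2^{-N}\emph{\textbf{k}}\|_2\le\sqrt{d}(\|\Omega\|_{2,\sup}+2^{-N}M)$, so all sampling points stay in a fixed annulus away from the pole of $g_N$; in particular the only values of $g_N$ used in Approach \ref{ppp55} are at $2^{-N}\emph{\textbf{k}},2^{-N}\emph{\textbf{k}}',2^{-N}\emph{\textbf{k}}''$ and are uniformly bounded by the uniform admissibility hypothesis \eqref{PLKCV}. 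Next, $2^{-N}\emph{\textbf{k}}'-2^{-N}\emph{\textbf{k}}=2^{-2N}\emph{\textbf{k}}$ and $2^{-N}\emph{\textbf{k}}''-2^{-N}\emph{\textbf{k}}=-2^{-2N}\emph{\textbf{k}}$, whose norms are $2^{-N}\|2^{-N}\emph{\textbf{k}}\|_2\le C_\Omega 2^{-N}$. Copying the square–root argument of Theorem \ref{miyuzhong}, the bounds $\sqrt{A_{N,\emph{\textbf{k}},\emph{\textbf{k}}'}}\le\sqrt{I_{N,\emph{\textbf{k}}'}}+|f(2^{-N}\emph{\textbf{k}})-f(2^{-N}\emph{\textbf{k}}')|$ (and its reverse), together with the modulus–of–continuity estimate \eqref{ghgh1} of Proposition \ref{1stlemma}, yield $|\sqrt{A_{N,\emph{\textbf{k}},\emph{\textbf{k}}'}}-\sqrt{I_{N,\emph{\textbf{k}}'}}|=\mathrm{O}(2^{-N\zeta})$ and likewise for $\emph{\textbf{k}}''$; multiplying by $\sqrt{A}+\sqrt{I}\le 2(\widehat{C}(s,\varsigma)\|f\|_{H^{\varsigma}(\mathbb{R}^d)}+\sup_{\emph{\textbf{x}}\in\Omega}|g_N(\emph{\textbf{x}})|)$ via \eqref{ghgh} gives the spherical quasi–interference estimate $\|(A_{N,\emph{\textbf{k}},\emph{\textbf{k}}'}-I_{N,\emph{\textbf{k}}'},\,A_{N,\emph{\textbf{k}},\emph{\textbf{k}}''}-I_{N,\emph{\textbf{k}}''})\|_2\le C_1(s,\varsigma,\Omega)\|f\|_{H^{\varsigma}(\mathbb{R}^d)}(\|f\|_{H^{\varsigma}(\mathbb{R}^d)}+\sup_{\emph{\textbf{x}}\in\Omega}|g_N(\emph{\textbf{x}})|)2^{-N\zeta}$. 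The correction terms $a_N^2\|\emph{\textbf{k}}\|_2^{-2}(\cdot)$ in \eqref{qiujie2551} are exactly $|g_N(2^{-N}\emph{\textbf{k}})|^2-|g_N(2^{-N}\emph{\textbf{k}}')|^2$ and $|g_N(2^{-N}\emph{\textbf{k}})|^2-|g_N(2^{-N}\emph{\textbf{k}}'')|^2$, hence already built into formula \eqref{BVC1234} and carry no extra error. Feeding this into Theorem \ref{chubuguji} \eqref{cucaoguji}, and using that $\{g_N\}$ is uniformly $\gamma$–admissible (so the prefactor in \eqref{cucaoguji} is $\le\gamma$ by \eqref{p45tgy} and $\mu_{N;\emph{\textbf{k}},\emph{\textbf{k}}',\emph{\textbf{k}}''}\neq0$ by Proposition \ref{rongxuxing}), gives the pointwise bound $|f(2^{-N}\emph{\textbf{k}})-\mathring{f}(2^{-N}\emph{\textbf{k}})|\le\gamma C_1(s,\varsigma,\Omega)\|f\|_{H^{\varsigma}(\mathbb{R}^d)}(\|f\|_{H^{\varsigma}(\mathbb{R}^d)}+\sup_{k\ge1}\sup_{\emph{\textbf{x}}\in\Omega}|g_k(\emph{\textbf{x}})|)2^{-N\zeta}$, the last supremum being finite by \eqref{PLKCV}. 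Combining with the bound on $T_1$ and absorbing $\mathrm{Vol}^{1/2}(\Omega)$ and $C_\Omega^\zeta$ into the constant $\widehat{C}_1(s,\varsigma,\Omega)$ gives \eqref{908716}.

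The routine parts — the error splitting, the $H^s$–versus–$L^2$ comparison, and the unit–decomposition trick that kills the cardinality growth of $\Lambda_{\Omega,N}$ — are identical to the plane–wave proof. The genuinely new, and most delicate, step is the spherical quasi–interference estimate: because $g_N$ has a pole at the origin and a non-constant modulus $a_N\|\emph{\textbf{x}}\|_2^{-1}$, one must keep careful track of both the lower bound $\|2^{-N}\emph{\textbf{k}}\|_2\ge q_{l_0}>0$ (to control $\sup|g_N|$ and guarantee admissibility) and the upper bound (to keep the sampling shifts of order $2^{-N}$), and verify that the $a_N^2\|\emph{\textbf{k}}\|_2^{-2}$ terms of \eqref{qiujie2551} do not disturb the Lipschitz cancellation. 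Note finally that, unlike the plane case, the shift length here is $2^{-N}\|2^{-N}\emph{\textbf{k}}\|_2$ rather than $2^{-N}$; this is precisely why the resulting rate in \eqref{908716} is $2^{-N\zeta}$ with an $\Omega$–dependent constant, instead of the $2^{-(N+1)\zeta}$ obtained in Theorem \ref{firstmaintheorem}.
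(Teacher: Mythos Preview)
Your proposal is correct and follows essentially the same approach as the paper's own proof: the same $T_1+T_2$ splitting via \eqref{roi}, the same use of Proposition \ref{caiyangapproximation} for $T_1$, the same square--root argument of Theorem \ref{miyuzhong} adapted to the radial shifts $2^{-N}\emph{\textbf{k}}'-2^{-N}\emph{\textbf{k}}=2^{-2N}\emph{\textbf{k}}$ together with Proposition \ref{1stlemma}, and then Theorem \ref{chubuguji} plus $\gamma$--admissibility to get the pointwise bound, finishing with the nonnegativity/unit--decomposition trick of \eqref{yyy345}. Your explanation of why the rate degrades from $2^{-(N+1)\zeta}$ to $2^{-N\zeta}$ with an $\Omega$--dependent constant is exactly the mechanism the paper uses in \eqref{C1234jian}--\eqref{HJJ}; the only cosmetic difference is that the paper does not explicitly invoke the lower bound $q_{l_0}$ from Proposition \ref{dbdn} in this proof, relying instead on the uniform admissibility hypothesis \eqref{PLKCV} to guarantee boundedness of the $g_N$--values.
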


\begin{proof}
For any level $N$ and $(\emph{\textbf{k}}, \emph{\textbf{k}}', \emph{\textbf{k}}'')\in \Xi_{\Omega,N}$,
as in \eqref{fangshenengliang} the quasi-interference intensities $A_{N, \emph{\textbf{k}}, \emph{\textbf{k}}'}$ and  $A_{N, \emph{\textbf{k}}, \emph{\textbf{k}}''}$ are defined by
  \begin{align}\label{123fangshenengliang} A_{N,\emph{\textbf{k}}, \emph{\textbf{k}}'}=|f(2^{-N}\emph{\textbf{k}})+g_{N}(2^{-N}\emph{\textbf{k}}')|^{2}, A_{N,\emph{\textbf{k}}, \emph{\textbf{k}}''}=|f(2^{-N}\emph{\textbf{k}})+g_{N}(2^{-N}\emph{\textbf{k}}'')|^{2}.\end{align}
By \eqref{lingwaijihehewenjie},   $\emph{\textbf{k}}'=(1+2^{-N})\emph{\textbf{k}}$ and  $\emph{\textbf{k}}''=(1-2^{-N})\emph{\textbf{k}}$.
%
We next   construct  the approximation to $A_{N, \emph{\textbf{k}}, \emph{\textbf{k}}'}$ and  $A_{N, \emph{\textbf{k}}, \emph{\textbf{k}}''}$ by the interference intensities
$I_{N,\emph{\textbf{k}}'}$ and $I_{N,\emph{\textbf{k}}''}$.
For any $\emph{\textbf{k}}=(k_{1}, \ldots, k_{d})\in \Lambda_{\Omega, N}$ we have
\begin{align}\label{CVXZC}
\|2^{-N}\emph{\textbf{k}}\|_{2}\leq2^{-N}\sqrt{d}\|\emph{\textbf{k}}\|_{\infty}\underbrace{\leq}_{\tiny{\hbox{from}} \ \eqref{fft1}}2^{-N}\sqrt{d}(2^{N}\|\Omega\|_{2,\sup}+M)\leq
\sqrt{d}(\|\Omega\|_{2,\sup}+M)
\end{align}
where  $M=\max\{M_{1}, \ldots, M_{d}\}$.
We first estimate $\sqrt{A_{N,\emph{\textbf{k}}, \emph{\textbf{k}}'}}$ as follows,
\begin{align}\label{C1234jian}
\begin{array}{lll} \sqrt{A_{N,\emph{\textbf{k}}, \emph{\textbf{k}}'}}&=|f(2^{-N}\emph{\textbf{k}}')+g_{N}(2^{-N}\emph{\textbf{k}}')+f(2^{-N}\emph{\textbf{k}})-f(2^{-N}\emph{\textbf{k}}')|\\
&\leq \sqrt{I_{N,\emph{\textbf{k}}'}}+|f(2^{-N}\emph{\textbf{k}})-f(2^{-N}\emph{\textbf{k}}')|\\
&\leq \sqrt{I_{N,\emph{\textbf{k}}'}}+\widehat{C}(s,\varsigma)2^{1-\zeta}\|f\|_{H^{\varsigma}(\mathbb{R}^{d})}\|2^{-N}(\emph{\textbf{k}}-\emph{\textbf{k}}')\|_{2}^{\zeta}
\quad (\ref{C1234jian} A)\\
&= \sqrt{I_{N,\emph{\textbf{k}}'}}+\widehat{C}(s,\varsigma)2^{1-\zeta}\|f\|_{H^{\varsigma}(\mathbb{R}^{d})}\|2^{-2N}\emph{\textbf{k}}\|_{2}^{\zeta}
\\
&\leq\sqrt{I_{N,\emph{\textbf{k}}'}}+\widehat{C}(s,\varsigma)2^{1-\zeta}\|f\|_{H^{\varsigma}(\mathbb{R}^{d})}
(\sqrt{d}(\|\Omega\|_{2,\sup}+M))^{\zeta}2^{-N\zeta}, \quad (\ref{C1234jian} B)
\end{array}\end{align}
where $(\ref{C1234jian} A)$ (with $\widehat{C}(s,\varsigma)\geq1$) and $(\ref{C1234jian} B)$ are derived  from Proposition  \ref{1stlemma} \eqref{ghgh1}
and \eqref{CVXZC}, respectively.
Similarly, we can prove that \begin{align}\label{niluohe} \sqrt{A_{N,\emph{\textbf{k}}, \emph{\textbf{k}}'}}\geq\sqrt{I_{N,\emph{\textbf{k}}'}}-\widehat{C}(s,\varsigma)2^{1-\zeta}\|f\|_{H^{\varsigma}(\mathbb{R}^{d})}
(\sqrt{d}(\|\Omega\|_{2,\sup}+M))^{\zeta}2^{-N\zeta}.
\end{align}
Combining \eqref{C1234jian} and  \eqref{niluohe} we have
\begin{align}
|\sqrt{A_{N,\emph{\textbf{k}}, \emph{\textbf{k}}'}}-\sqrt{I_{N,\emph{\textbf{k}}'}}|\leq\widehat{C}(s,\varsigma)2^{1-\zeta}\|f\|_{H^{\varsigma}(\mathbb{R}^{d})}
(\sqrt{d}(\|\Omega\|_{2,\sup}+M))^{\zeta}2^{-N\zeta}.
\end{align}
 Therefore,
  \begin{align}\label{HJJ}\begin{array}{lll} |I_{N,\emph{\textbf{k}}}-A_{N,\emph{\textbf{k}},\emph{\textbf{k}}'}|&=|\sqrt{A_{N,\emph{\textbf{k}}, \emph{\textbf{k}}'}}-\sqrt{I_{N,\emph{\textbf{k}}'}}||\sqrt{A_{N,\emph{\textbf{k}}, \emph{\textbf{k}}'}}+\sqrt{I_{N,\emph{\textbf{k}}'}}|\\
 &\leq
 \widehat{C}(s,\varsigma)2^{1-\zeta}\|f\|_{H^{\varsigma}(\mathbb{R}^{d})}
(\sqrt{d}(\|\Omega\|_{2,\sup}+M))^{\zeta}2^{-N\zeta}\\
&\times 2(\sup_{\emph{\textbf{x}}\in \mathbb{R}^{d}}|f(\emph{\textbf{x}})|+\sup_{\emph{\textbf{x}}\in \Omega}|g_{N}(\emph{\textbf{x}})|)\\
 &\leq
 \widehat{C}(s,\varsigma)2^{2-\zeta}\|f\|_{H^{\varsigma}(\mathbb{R}^{d})}
(\sqrt{d}(\|\Omega\|_{2,\sup}+M))^{\zeta}2^{-N\zeta}\\
&\times\big(\widehat{C}(s,
\varsigma)\|f\|_{H^{\varsigma}(\mathbb{R}^{d})}+\sup_{\emph{\textbf{x}}\in \Omega}|g_{N}(\emph{\textbf{x}})|\big),
 \end{array}\end{align}
where the last inequality is derived from Proposition  \ref{1stlemma} \eqref{ghgh}.
Similarly, we can prove that
 \begin{align}\label{H5617JJ}\begin{array}{lll} |I_{N,\emph{\textbf{k}}}-A_{N,\emph{\textbf{k}},\emph{\textbf{k}}''}|
 &\leq
 \widehat{C}(s,\varsigma)2^{2-\zeta}\|f\|_{H^{\varsigma}(\mathbb{R}^{d})}
(\sqrt{d}(\|\Omega\|_{2,\sup}+M))^{\zeta}2^{-N\zeta}\\
&\times\big(\widehat{C}(s,
\varsigma)\|f\|_{H^{\varsigma}(\mathbb{R}^{d})}+\sup_{\emph{\textbf{x}}\in \Omega}|g_{N}(\emph{\textbf{x}})|\big).
 \end{array}\end{align}

Next we  estimate the error $\|\{\mathring{f}(2^{-N}\emph{\textbf{k}})-f(2^{-N}\emph{\textbf{k}}): \emph{\textbf{k}}\in  \Lambda_{\Omega,N}\}\|_{2}$
where $\mathring{f}(2^{-N}\emph{\textbf{k}})$ is derived from Approach   \ref{ppp55},
 \begin{align}\label{YDG}\begin{array}{lllllll} \Big\|\left[\begin{array}{cccccccccc}\Re(f(2^{-N}K_{1})) \\
\Im(f(2^{-N}K_{1}))\end{array}\right]-\left[\begin{array}{cccccccccc}\mathring{\Re}(f(2^{-N}K_{1})) \\
\mathring{\Im}(f(2^{-N}K_{1}))\end{array}\right]\Big\|_{2}\\
\leq\frac{\max\{|g_{N}(2^{-N}\emph{\textbf{k}})-g_{N}(2^{-N}\emph{\textbf{k}}')|, |\overline{g_{N}}(2^{-N}\emph{\textbf{k}})-\overline{g_{N}}(2^{-N}\emph{\textbf{k}}'')|\}}{|\Im[(g_{N}(2^{-N}\emph{\textbf{k}})-g_{N}(2^{-N}\emph{\textbf{k}}^{'}))(\overline{g_{N}}(2^{-N}\emph{\textbf{k}})-\overline{g_{N}}(2^{-N}\emph{\textbf{k}}^{''}))]|}
\sqrt{(I_{N,\emph{\textbf{k}}'}-A_{N,\emph{\textbf{k}},\emph{\textbf{k}}'})^{2}+(I_{N,\emph{\textbf{k}}''}-A_{N,\emph{\textbf{k}},\emph{\textbf{k}}''})^{2}} \quad  (\ref{YDG} A)\\
\leq\gamma\sqrt{(I_{N,\emph{\textbf{k}}'}-A_{N,\emph{\textbf{k}},\emph{\textbf{k}}'})^{2}+(I_{N,\emph{\textbf{k}}''}-A_{N,\emph{\textbf{k}},\emph{\textbf{k}}''})^{2}}\quad (\ref{YDG} B)\\
\leq
\gamma\widehat{C}(s,\varsigma)2^{2-\zeta}\sqrt{2}
(\sqrt{d}(\|\Omega\|_{2,\sup}+M))^{\zeta}\\
\quad \times\|f\|_{H^{\varsigma}(\mathbb{R}^{d})}\big(\widehat{C}(s,
\varsigma)\|f\|_{H^{\varsigma}(\mathbb{R}^{d})}+\sup_{k\geq1}\sup_{\emph{\textbf{x}}\in \Omega}|g_{k}(\emph{\textbf{x}})|\big)2^{-N\zeta},
(\ref{YDG} C)\end{array}\end{align}
where $(\ref{YDG} A)$, $(\ref{YDG} B)$ are derived from
\eqref{p45tgy} and  \eqref{bzxsd}, respctively, and $(\ref{YDG} C)$  is from  \eqref{HJJ} and \eqref{H5617JJ}.
Now we  choose $\widehat{C}_{0}(s,\varsigma,\Omega):=\widehat{C}^{2}(s,\varsigma)2^{5/2-\zeta}
(\sqrt{d}(\|\Omega\|_{2,\sup}+M))^{\zeta}<\infty$. Then it follows from $(\ref{YDG} C)$ and $\widehat{C}(s,\varsigma)\geq1$
that
\begin{align}\label{YD1G}\begin{array}{lllllll} \Big\|\left[\begin{array}{cccccccccc}\Re(f(2^{-N}K_{1})) \\
\Im(f(2^{-N}K_{1}))\end{array}\right]-\left[\begin{array}{cccccccccc}\mathring{\Re}(f(2^{-N}K_{1})) \\
\mathring{\Im}(f(2^{-N}K_{1}))\end{array}\right]\Big\|_{2}\\
\leq
\gamma\widehat{C}_{0}(s,\varsigma,\Omega)\|f\|_{H^{\varsigma}(\mathbb{R}^{d})}\big(\|f\|_{H^{\varsigma}(\mathbb{R}^{d})}+\sup_{k\geq1}\sup_{\emph{\textbf{x}}\in \Omega}|g_{k}(\emph{\textbf{x}})|\big)2^{-N\zeta}.\end{array}\end{align}
As in \eqref{8765} we have
\begin{align}\label{128765}\begin{array}{llllll}
 \|\big(\sum_{\emph{\textbf{k}}\in\mathbb{Z}^{d}}\mathring{f}(2^{-N}\emph{\textbf{k}})\phi(2^{N}\cdot-\emph{\textbf{k}})-f\big)\big|_{\Omega}\|_{L^2(\mathbb{R}^d)}\\
 \leq \|\sum_{\emph{\textbf{k}}\in \mathbb{Z}^d}f(2^{-N}\emph{\textbf{k}})\phi(2^{N}\cdot-\emph{\textbf{k}})-f\|_{L^2(\mathbb{R}^d)}\quad (\ref{128765} A)\\
\ +\|\big(\sum_{\emph{\textbf{k}}\in \Lambda_{\Omega,N}}(f(2^{-N}\emph{\textbf{k}})-\mathring{f}(2^{-N}\emph{\textbf{k}}))\phi(2^{N}\cdot-\emph{\textbf{k}})\big)\big|_{\Omega}\|_{L^2(\mathbb{R}^d)}. \quad (\ref{128765} B)\end{array}\end{align}
By \eqref{bound67890}, $ (\ref{128765} A)$ is estimated as follows,
\begin{align}\begin{array}{lll} \label{1234bound67890123}
\displaystyle
\big\|\sum_{\emph{\textbf{k}}\in \mathbb{Z}^d}f(2^{-N}\emph{\textbf{k}})\phi(2^{N}\cdot-\emph{\textbf{k}})-f\big\|_{L^2(\mathbb{R}^d)} \leq C(s,
\varsigma)\|f\|_{H^{\varsigma}(\mathbb{R}^{d})} 2^{-N(\varsigma-s)/2}.
\end{array}
\end{align}
Additionally, $(\ref{128765} B)$ is estimated by
\begin{align}\label{KVBXCZA}\begin{array}{lll}
\|\big(\sum_{\emph{\textbf{k}}\in \Lambda_{\Omega,N}}(f(2^{-N}\emph{\textbf{k}})-\mathring{f}(2^{-N}\emph{\textbf{k}}))\phi(2^{N}\cdot-\emph{\textbf{k}})\big)\big|_{\Omega}\|_{L^2(\mathbb{R}^d)}\\
\leq
\|\big(\sum_{\emph{\textbf{k}}\in \Lambda_{\Omega,N}}|(f(2^{-N}\emph{\textbf{k}})-\mathring{f}(2^{-N}\emph{\textbf{k}}))|\phi(2^{N}\cdot-\emph{\textbf{k}})\big)\big|_{\Omega}\|_{L^2(\mathbb{R}^d)} \quad (\ref{KVBXCZA} A)\\
\leq\sup_{\emph{\textbf{k}}\in\Lambda_{\Omega,N}}\{|f(2^{-N}\emph{\textbf{k}})-\mathring{f}(2^{-N}\emph{\textbf{k}})|\}\|\big(\sum_{\emph{\textbf{k}}\in\Lambda_{\Omega,N}}\phi(2^{N}\cdot-\emph{\textbf{k}})\big)\big|_{\Omega}\|_{L^2(\mathbb{R}^d)}\\
\leq \gamma\widehat{C}_{0}(s,\varsigma,\Omega)\hbox{Vol}^{1/2}(\Omega)\|f\|_{H^{\varsigma}(\mathbb{R}^{d})}\big(\|f\|_{H^{\varsigma}(\mathbb{R}^{d})}+\sup_{k\geq1}\sup_{\emph{\textbf{x}}\in \Omega}|g_{k}(\emph{\textbf{x}})|\big)2^{-N\zeta}, \quad  (\ref{KVBXCZA} B)\\
=\gamma\widehat{C}_{1}(s,\varsigma,\Omega)\|f\|_{H^{\varsigma}(\mathbb{R}^{d})}\big(\|f\|_{H^{\varsigma}(\mathbb{R}^{d})}+\sup_{k\geq1}\sup_{\emph{\textbf{x}}\in \Omega}|g_{k}(\emph{\textbf{x}})|\big)2^{-N\zeta},
\end{array}
\end{align}
where $\widehat{C}_{1}(s,\varsigma,\Omega)=\widehat{C}_{0}(s,\varsigma,\Omega)\hbox{Vol}^{1/2}(\Omega)$, $(\ref{KVBXCZA} A)$ is from the nonegativity of $\phi$ and $(\ref{KVBXCZA} B)$ is from \eqref{YD1G} and
the unit decomposition \ref{yydnew}.
Combining \eqref{1234bound67890123} and \eqref{KVBXCZA}, the proof is completed.
\end{proof}


\subsection{A final remark: interpretation of Theorems \ref{firstmaintheorem} and \ref{secondmaintheorem} from the perspective of intensity difference}\label{interpretation}
 Although the inputs for  Approaches \ref{ppp4} and \ref{ppp55} are the interference intensities in \eqref{kkJJJ} and \eqref{nengliangqiumianbo123}, as implied in \eqref{qiujie255} and \eqref{qiujie2551}
what we need indeed  for computation is the intensity  difference. Particularly, in \eqref{qiujie255} both
$$I_{N,\emph{\textbf{k}}}-I_{N,\emph{\textbf{k}}'}=|f(2^{-N}\emph{\textbf{k}})+g_{N}(2^{-N}\emph{\textbf{k}})|^{2}-|f(2^{-N}\emph{\textbf{k}}+2^{-N}\emph{\textbf{e}}_{j})+g_{N}(2^{-N}\emph{\textbf{k}}+2^{-N}\emph{\textbf{e}}_{j})|^{2}$$
and
$$I_{N,\emph{\textbf{k}}}-I_{N,\emph{\textbf{k}}''}=|f(2^{-N}\emph{\textbf{k}})+g_{N}(2^{-N}\emph{\textbf{k}})|^{2}-|f(2^{-N}\emph{\textbf{k}}-2^{-N}\emph{\textbf{e}}_{j})+g_{N}(2^{-N}\emph{\textbf{k}}-2^{-N}\emph{\textbf{e}}_{j})|^{2}$$
can be interpreted mathematically as the difference of the intensity function $|f(\emph{\textbf{x}})+g_{N}(\emph{\textbf{x}})|^{2}$
at $2^{-N}\emph{\textbf{k}}$ along the coordinate  direction $\emph{\textbf{e}}_{j}$. In this sense, Theorems \ref{firstmaintheorem} \eqref{90876} and \ref{secondmaintheorem}
\eqref{908716} can be also interpreted as the recovery from intensity difference.
Such an   interpretation from  intensity difference
resembles  that of transport of intensity equation (TIE, a classical   noninterference method, c.f. \cite{TIE1,TIE2,TIE3}) which holds for PR problems associated with the
paraxial monochromatic coherent beam propagation. Particularly,
the target field is  $U(x,y, z)=\sqrt{I(x,y, z)}e^{\textbf{i}\phi(x, y)}, (x,y, z)\in \mathbb{R}^{3}$.
Then  the corresponding   TIE is expressed as
$$-\kappa\frac{\partial I(x, y, z)}{\partial z}=\nabla\cdot[I(x, y, z)\nabla\phi(x, y)],$$
where $\kappa$ is the wave number of $U$ and $\nabla$ is the  gradient.

\bibliographystyle{splncs03}

\bibliography{pls}

\end{document}